\documentclass[runningheads]{llncs}

\usepackage{amsmath, amssymb, enumerate ,mathtools,mathrsfs,xfrac,enumitem,etoolbox}
\usepackage{amsfonts}
\usepackage{enumerate}
\usepackage[ugly]{nicefrac}

\usepackage{fontawesome}

\newenvironment{proofof}[1]{\par\noindent{\itshape Proof of #1\ }}{\qed}

  \usepackage{hyperref,url}
  \hypersetup{  
    bookmarksnumbered
  }
  \hypersetup{  
    breaklinks=false,
    pdfborderstyle={/S/U/W 0},  
    citebordercolor=.235 .702 .443,
    urlbordercolor=.255 .412 .882,
    linkbordercolor=.804 .149 .19,
  }


\newcommand{\reals}{\mathbb{R}}

\newcommand{\nats}{\mathbb{N}}
\newcommand{\natz}{\mathbb{N}_{0}}

\newcommand{\indica}[1]{\mathbb{I}_{#1}}

\newcommand{\prev}[1]{\mathrm{E}_{#1}}

\newcommand{\upprev}[1]{\overline{\mathrm{E}}_{#1}}
\newcommand{\eiupprev}[1]{\overline{\mathrm{E}}^{\, \mathrm{ei}}_{#1}}
\newcommand{\ciupprev}[1]{\overline{\mathrm{E}}^{\, \mathrm{ci}}_{#1}}
\newcommand{\riupprev}[1]{\overline{\mathrm{E}}^{\, \mathrm{ri}}_{#1}}
\newcommand{\lowprev}[1]{\underline{\mathrm{E}}_{#1}}
\newcommand{\eilowprev}[1]{\underline{\mathrm{E}}^{\, \mathrm{ei}}_{#1}}
\newcommand{\cilowprev}[1]{\underline{\mathrm{E}}^{\mathrm{ci}}_{#1}}
\newcommand{\rilowprev}[1]{\underline{\mathrm{E}}^{\mathrm{ri}}_{#1}}

\newcommand{\settrans}[1]{\mathscr{T}_{#1}}

\newcommand{\statespace}{\mathscr{X}}

\newcommand{\setofgambles}{\mathscr{L}}

\newcommand{\uptrans}{\overline{T}}

\newcommand{\avuptrans}[2]{\overline{T}{}_{\hspace*{-2pt} #1}^{#2}}

\newcommand{\eimarkov}[1]{\mathscr{P}^{\, \mathrm{ei}}_{#1}}
\newcommand{\cimarkov}[1]{\mathscr{P}^{\, \mathrm{ci}}_{#1}}
\newcommand{\rimarkov}[1]{\mathscr{P}^{\, \mathrm{ri}}_{#1}}

\newcommand{\hnorm}[1]{\left\lVert#1\right\rVert_{\mathrm{H}}}
\newcommand{\supnorm}[1]{\left\lVert#1\right\rVert_{\infty}}

\newtoggle{arxiv}
\toggletrue{arxiv}

\begin{document}

\title{Limit Behaviour of Upper and Lower Expected Time Averages in Discrete-Time Imprecise Markov Chains}
\titlerunning{Expected Time Averages in Discrete-Time Imprecise Markov Chains}

\author{\Large Natan T'Joens
\and Jasper De Bock
}

\authorrunning{N. T'Joens et al.}

\institute{FLip, Ghent University, Belgium
\email{\{natan.tjoens,jasper.debock\}@ugent.be}}

\maketitle

\keywords{Imprecise Markov chain \and Upper expectation \and Upper transition operator \and Expected time average \and Weak Ergodicity.} 

\begin{abstract}
We study the limit behaviour of upper and lower bounds on expected time averages in imprecise Markov chains; a generalised type of Markov chain where the local dynamics, traditionally characterised by transition probabilities, are now represented by sets of `plausible' transition probabilities.
Our main result is a necessary and sufficient condition under which these upper and lower bounds, called upper and lower expected time averages, will converge as time progresses towards infinity to limit values that do not depend on the process' initial state.
Remarkably, our condition is considerably weaker than those needed to establish similar results for so-called limit---or steady state---upper and lower expectations, which are often used to provide approximate information about the limit behaviour of time averages as well.
We show that such an approximation is sub-optimal and that it can be significantly improved by directly using upper and lower expected time averages.
\end{abstract}

\section{Introduction}
Markov chains are probabilistic models that can be used to describe the uncertain dynamics of a large variety of stochastic processes.
One of the key results within the field is the point-wise ergodic theorem.
It establishes a relation between the long-term time average of a real-valued function and its limit expectation, which is guaranteed to exist if the Markov chain is ergodic.\footnote{
The term ergodicity has various meanings; sometimes it refers to properties of an invariant measure, sometimes it refers to properties such as irreducibility (with or without aperiodicity), regularity, ... 
Our usage of the term follows conventions introduced in earlier work \cite{DECOOMAN201618,Hermans:2012ie} on imprecise Markov chains; see Sections~\ref{section: precise Markov chains} and~\ref{section: trans operators and ergodicity}.}\label{footnote}   
For this reason, limit expectations and limit distributions have become central objects of interest.
Of course, if one is interested in the long-term behaviour of time averages, one could also study the expected values of these averages directly.
This is not often done though, because the limit of these expected time averages coincides with the aforementioned limit expectations, which can straightforwardly be obtained by solving a linear eigenproblem \cite{kemeny1960finite}.

We here consider a generalisation of Markov chains, called imprecise Markov chains \cite{deCooman:2009jz,HermansITIP,DECOOMAN201618}, for which the considerations above are not necessarily true.
Imprecise Markov chains are sets of traditional (``precise'') probabilistic models, where the Markov property (history independence) and time-homogeneity apply to the collection of precise models as a whole, but not necessarily to the individual models themselves.
Imprecise Markov chains therefore allow one to incorporate model uncertainty about the numerical values of the transition probabilities that make up a Markov chain, but also, and more importantly, about structural assumptions such as time-homogeneity and the Markov property.
For such an imprecise Markov chain, one is then typically interested in obtaining tight upper and lower bounds on inferences for the individual constituting models.
The operators that represent these upper and lower bounds are respectively called upper and lower expectations.

Just like traditional Markov chains can have a limit expectation, an imprecise Markov chain can have limit upper and lower expectations.
There are necessary and sufficient conditions for their existence \cite{Hermans:2012ie} as well as an imprecise variant of the point-wise ergodic theorem \cite{DECOOMAN201618}.
An important difference with traditional Markov chains however, is that upper and lower bounds on expectations of time averages---we will call these upper and lower expected time averages---may not converge to limit upper and lower expectations.
Nevertheless, because they give conservative bounds \cite[Lemma~57]{8535240}, and because they are fairly easy to compute, limit upper and lower expectations are often used as descriptors of the long-term behaviour of imprecise Markov chains, even if one is actually interested in time averages.
This comes at a cost though: as we illustrate in Section~\ref{section: trans operators and ergodicity}, both inferences can differ greatly, with limit expectations providing far too conservative bounds.

Unfortunately, apart from some experiments in \cite{8535240}, little is known about the long-term behaviour of upper and lower expected time averages in imprecise Markov chains.
The aim of this paper is to remedy this situation.
Our main result is an accessibility condition that is necessary and sufficient for upper and lower expected time averages to converge to a limit value that does not depend on the process' initial state; see Section~\ref{section: Result}.
Remarkably, this condition is considerably weaker than the ones required for limit lower and upper expectations to exist.

Technical proofs are relegated to the appendix \iftoggle{arxiv}{at the end of the paper}{of an extended online version~\cite{arXivIPMU2020}}. This is particularly true for the results in Section~\ref{section: Result}, where the main text provides an informal argument that aims to provide intuition. 

\section{Markov Chains}\label{section: precise Markov chains}
  
We consider an infinite sequence $X_{0} X_{1} X_{2} \cdots$ of uncertain states, where each state $X_k$ at time $k \in \natz \coloneqq \nats \cup \{0\}$ takes values in some finite set $\statespace{}$, called the \emph{state space}.
Such a sequence $X_{0} X_{1} X_{2} \cdots$ will be called a \emph{(discrete-time) stochastic process}.
For any $k,\ell \in \natz{}$ such that $k \leq \ell$, we use $X_{k:\ell}$ to denote the finite subsequence $X_{k} \cdots X_{\ell}$ of states that takes values in $\statespace{}^{\ell-k+1}$.
Moreover, for any $k,\ell \in \natz{}$ such that $k \leq \ell$ and any $x_{k:\ell} \in \statespace{}^{\ell-k+1}$, we use $X_{k:\ell} = x_{k:\ell}$ to denote the event that $X_{k} = x_k \cdots X_{\ell} = x_\ell$.
The uncertain dynamics of a stochastic process are then typically described by probabilities of the form $\mathrm{P}(X_{k+1} = x_{k+1} \vert X_{0:k} = x_{0:k})$, for any $k \in \natz{}$ and any $x_{0:k+1} \in \statespace{}^{k+2}$.
They represent beliefs about which state the process will be in at time $k+1$ given that we know that it was in the states $x_{0} \cdots x_{k}$ at time instances $0$ through $k$.
Additionally, our beliefs about the value of the initial state $X_0$ can be represented by probabilities $\mathrm{P}(X_0=x_0)$ for all $x_{0} \in \statespace{}$.
The local probability assessments $\mathrm{P}(X_{k+1} = x_{k+1} \vert X_{0:k} = x_{0:k})$ and $\mathrm{P}(X_0=x_0)$ can now be combined to construct a global probability model $\mathrm{P}$ that describes the dynamics of the process on a more general level.
This can be done in various ways; one of the most common ones being a measure-theoretic approach where countable additivity plays a central role.
For our purposes however, we will only require finite additivity.
Regardless, once you have such a global probability model $\mathrm{P}$, it can then be used to define expectations and make inferences about the uncertain behaviour of the process.

For any set $A$, let us write $\setofgambles{}(A)$ to denote the set of all real-valued functions on $A$.
Throughout, for any $a \in A$, we use $\indica{a}$ to denote the \emph{indicator} of $a$: the function in $\setofgambles{}(A)$ that takes the value $1$ in $a$ and $0$ otherwise.
We will only be concerned with (upper and lower) expectations of \emph{finitary functions}:
functions that depend on the state of the process at a finite number of time instances.
So if $f$ is finitary, we can write $f = g(X_{0:k})$ for some $k \in \natz{}$ and some $g \in \setofgambles{}(\statespace{}^{k+1})$.
Note that finitary functions are bounded; this follows from their real-valuedness and the fact that $\statespace{}$ is finite.
The expectation of a finitary function $f(X_{0:k})$ conditional on some event $X_{0:\ell} = x_{0:\ell}$ simply reduces to a finite weighted sum:
\begin{align*}
\prev{\mathrm{P}}(f(X_{0:k}) \vert X_{0:\ell} = x_{0:\ell})
= 
\sum_{x_{\ell+1:k} \in \statespace{}^{k-\ell}} f(x_{0:k})\prod_{i=\ell}^{k-1} \mathrm{P}(X_{i+1} = x_{i+1} \vert X_{0:i} = x_{0:i}).
\end{align*} 
A particularly interesting case arises when studying stochastic processes that are described by a probability model $\mathrm{P}$ that satisfies
\begin{align*}
\mathrm{P}(X_{k+1} = y \, \vert \,  X_{0:k} = x_{0:k}) = \mathrm{P}(X_{k+1} = y \, \vert \,  X_{k} =  x_{k}),
\end{align*}
for all $k \in \natz{}$, all $y \in \statespace{}$ and all $x_{0:k} \in \statespace{}^{k+1}$.
This property, known as the \emph{Markov property}, states that given the present state of the process the future behaviour of the process does not depend on its  history. 
A process of this type is called a \emph{Markov chain}.
We moreover call it \emph{(time) homogeneous} if additionally
$\mathrm{P}(X_{k+1} = y \, \vert \, X_{k} = x) = \mathrm{P}(X_{1} = y \, \vert \, X_{0} = x)$,
for all $k \in \natz{}$ and all $x,y \in \statespace{}$.
Hence, together with the assessments $\mathrm{P}(X_0 = x_0)$, the dynamics of a homogeneous Markov chain are fully characterised by the probabilities $\mathrm{P}(X_{1} = y \, \vert \, X_{0} = x)$.
These probabilities are typically gathered in a \emph{transition matrix} $T$; a row-stochastic $\vert \statespace{} \vert  \times \vert \statespace{} \vert$ matrix $T$ that is defined by $T(x,y) \coloneqq \mathrm{P}(X_{1} = y \, \vert \, X_{0} = x)$ for all $x,y \in \statespace{}$.
This matrix representation~$T$ is particularly convenient because it can be regarded as a linear operator from $\setofgambles{}(\statespace{})$ to $\setofgambles{}(\statespace{})$, defined for any $k \in \natz{}$, any $f \in \setofgambles{}(\statespace{})$ and any $x \in \statespace{}$ by 
\begin{align*}
Tf (x) \coloneqq 
\sum_{y \in \statespace{}} f(y) \mathrm{P}(X_{k+1} = y \, \vert X_{k} = x) 
= \prev{\mathrm{P}}(f(X_{k+1}) \, \vert \, X_{k} = x ).
\end{align*}
More generally, we have that $\prev{\mathrm{P}}(f(X_{k+\ell}) \, \vert \, X_{k} = x ) = T^{\ell} f(x)$ for all $k \in \natz{}$, all $\ell \in \natz{}$ and all $x \in \statespace{}$.
Then, under some well-known accessibility conditions \cite[Proposition~3]{Hermans:2012ie}, the expectation $T^{\ell} f(x)$ converges for increasing $\ell$ towards a constant $\prev{\infty}(f)$ independently of the initial state $x$.
If this is the case for all $f \in \setofgambles{}(\statespace{})$, the homogeneous Markov chain will have a steady-state distribution, represented by the limit expectation $\prev{\infty}$, and we call the Markov chain \emph{ergodic}. 
The expectation $\prev{\infty}$ is in particular also useful if we are interested in the limit behaviour of expected time averages.
Indeed, let $\overline{f}_{k}(X_{\ell:\ell + k}) \coloneqq \sfrac{1}{(k+1)} \sum_{i = \ell}^{\ell + k} f(X_i)$ be the time average of some function $f \in \setofgambles{}(\statespace{})$ evaluated at the time instances $\ell$ through $k+\ell$.
Then, according to \cite[Theorem~38]{8535240}, the limit of the expected average $\lim_{k \to +\infty} \prev{\mathrm{P}}(\overline{f}_{k}(X_{0:k}))$ coincides with the limit expectation $\prev{\infty}(f)$.
One of the aims of this paper is to explore to which extent this remains true for imprecise Markov chains.

\section{Imprecise Markov Chains}

If the basic probabilities $\mathrm{P}(X_{k+1} \vert X_{0:k} = x_{0:k})$ that describe a stochastic process are imprecise, in the sense that we only have partial information about them, then we can still model the process' dynamics by considering a set $\settrans{x_{0:k}}$ of such probabilities, for all $k \in \natz{}$ and all $x_{0:k} \in \statespace{}^{k+1}$.
This set $\settrans{x_{0:k}}$ is then interpreted as the set of all probability mass functions $\mathrm{P}(X_{k+1} \vert X_{0:k} = x_{0:k})$ that we deem ``plausible''.
We here consider the special case where the sets $\settrans{x_{0:k}}$ satisfy a Markov property, meaning that $\settrans{x_{0:k}} = \settrans{x_{k}}$ for all $k \in \natz{}$ and all $x_{0:k} \in \statespace{}^{k+1}$.
Similar to the precise case, the sets $\settrans{x}$, for all $x \in \statespace{}$, can be gathered into a single object: the set $\settrans{}$ \/ of all row stochastic $\vert \statespace{} \vert \times \vert \statespace{} \vert$ matrices $T$ such that, for all $x \in \statespace{}$, the probability mass function $T(x, \cdot )$ is an element of $\settrans{x}$. 
A set $\settrans{}$ \/ of transition matrices defined in this way is called \emph{separately specified} \cite{HermansITIP}.
For any such set $\settrans{}$, the corresponding \emph{imprecise Markov chain under epistemic irrelevance} $\eimarkov{\settrans{}}$ \cite{deCooman:2010gd} is the set of all (precise) probability models $\mathrm{P}$ such that $\smash{\mathrm{P}(X_{k+1} \vert X_{0:k} = x_{0:k}) \in \settrans{x_k}}$ for all $k \in \natz{}$ and all $\smash{x_{0:k} \in \statespace{}^{k+1}}$.
The values of the probabilities $\mathrm{P}(X_0 = x_0)$ will be of no importance to us, because we will focus solely on (upper and lower) expectations conditional on the value of the initial state $X_0$.

Clearly, an imprecise Markov chain $\eimarkov{\settrans{}}$ also contains non-homogeneous, and even non-Markovian processes.
So the Markov property does in this case not apply to the individual probability assessments, but rather to the sets $\settrans{x_{0:k}}$.
The model $\eimarkov{\settrans{}}$ is therefore a generalisation of a traditional Markov chain where we allow for model uncertainty about, on the one hand, the mass functions $\mathrm{P}(X_{k+1} \vert X_{0:k} = x_{0:k})$ and, on the other hand, about structural assumptions such as the Markov and time-homogeneity property.
However, there are also types of imprecise Markov chains that do impose some of these properties.
For a given set $\settrans{}$, the \emph{imprecise Markov chain under complete independence} $\smash{\cimarkov{\settrans{}}}$ is the subset of $\smash{\eimarkov{\settrans{}}}$ that contains all, possibly non-homogeneous, Markov chains in $\smash{\eimarkov{\settrans{}}}$~\cite{8535240}.
The \emph{imprecise Markov chain under repetition independence} $\smash{\rimarkov{\settrans{}}}$ is the subset of $\smash{\eimarkov{\settrans{}}}$ containing all homogeneous Markov chains \cite{8535240}.
Henceforth, we let $\settrans{}$ \/ be some fixed, arbitrary set of transition matrices that is separately specified. 


Now, for any probability model $\mathrm{P}$ in the imprecise Markov chain $\eimarkov{\settrans{}}$, we can again consider the corresponding expectation operator $\prev{\mathrm{P}}$.
The \emph{upper} and \emph{lower expectation} are then respectively defined as the tightest upper and lower bound on this expectation:
\begin{align*}
\eiupprev{\settrans{}} \, (f \/ \vert A) \coloneqq \sup_{\mathrm{P} \in \eimarkov{\settrans{}}} \prev{\mathrm{P}}(f \/ \vert  A) \quad 
\text{ and } \quad \eilowprev{\settrans{}} \,(f  \vert  A) \coloneqq \inf_{\mathrm{P} \in \eimarkov{\settrans{}}} \prev{\mathrm{P}}(f  \vert  A),
\end{align*}
for any finitary function $f$ and any event $A$ of the form $X_{0:k} = x_{0:k}$. 
The operators $\smash{\eiupprev{\settrans{}}}$ and $\smash{\eilowprev{\settrans{}}}$ are related by conjugacy, meaning that $\smash{\eilowprev{\settrans{} \,}(\cdot \vert \cdot) = - \eiupprev{\settrans{} \,}(- \cdot \vert \cdot)}$, which allows us to focus on only one of them; upper expectations in our case.
The lower expectation $\smash{\eilowprev{\settrans{} \,}(f \/ \vert A)}$ of a finitary function $f$ can then simply be obtained by considering the upper expectation $\smash{- \eiupprev{\settrans{} \,}(- f \/ \vert A)}$.

In a similar way, we can define the upper expectations $\smash{\ciupprev{\settrans{}}}$ and $\smash{\riupprev{\settrans{}}}$ and the lower expectations $\smash{\cilowprev{\settrans{}}}$ and $\smash{\rilowprev{\settrans{}}}$ as the tightest upper and lower bounds on the expectations corresponding to the models in $\cimarkov{\settrans{}}$ and $\rimarkov{\settrans{}}$, respectively.
Since $\rimarkov{\settrans{}} \subseteq \cimarkov{\settrans{}} \subseteq \eimarkov{\settrans{}}$, we have that $\smash{\riupprev{\settrans{}} \, (f \/ \vert A) 
\leq \ciupprev{\settrans{}} \, (f \/ \vert A)
\leq \eiupprev{\settrans{}} \, (f \/ \vert A)}$
for any finitary function $f$ and any event $A$ of the form $X_{0:k} = x_{0:k}$.

As we have mentioned before, imprecise Markov chains generalise traditional Markov chains by incorporating different types of model uncertainty. 
The corresponding upper (and lower) expectations then allow us to make inferences that are robust with respect to this model uncertainty.
For a more detailed discussion on the motivation for and interpretation behind these and other types of so-called imprecise probability models, we refer to \cite{troffaes2014,Walley:1991vk,Augustin:2014di}. 

Within the context of imprecise Markov chains, we will be specifically concerned with two types of inferences: the upper and lower expectation of a function at a single time instant, and the upper and lower expectation of the time average of a function.
For imprecise Markov chains under epistemic irrelevance and under complete independence, both of these inferences coincide \cite[Theorem~51 \& Theorem~52]{8535240}.
For any $f \in \setofgambles{}(\statespace{})$ and any $x \in \statespace{}$, we will denote them by
\vspace*{-4pt}
\begin{align*}
\upprev{k}(f \vert x)
= \, &\eiupprev{\settrans{}}(f(X_{k}) \vert X_0 = x) 
= \ciupprev{\settrans{}}(f(X_{k}) \vert X_0 = x) \\
\text{ and } \  
\upprev{\mathrm{av},k}(f \vert x)
= \, &\eiupprev{\settrans{}}(\overline{f}_{k}(X_{0:k}) \vert X_0 = x) 
= \ciupprev{\settrans{}}(\overline{f}_{k}(X_{0:k}) \vert X_0 = x),
\end{align*}
respectively, where the dependency on $\settrans{}$ \/ is implicit.
The corresponding lower expectations can be obtained through conjugacy: 
$\lowprev{k}(f \vert x) = - \upprev{k}( -f \vert x)$ and $\lowprev{\mathrm{av},k}(f \vert x) = - \upprev{\mathrm{av},k}( -f \vert x)$ for all $f \in \setofgambles{}(\statespace{})$ and all $x \in \statespace{}$.
In the remainder, we will omit imprecise Markov chains under repetition independence from the discussion.
Generally speaking, this type of imprecise Markov chain is less studied within the field of imprecise probability because of its limited capacity to incorporate model uncertainty.
Indeed, it is simply a set of time-homogeneous precise Markov chains and therefore only allows for model uncertainty about the numerical values of the transition probabilities.
Moreover, as far as we know, a characterisation for the ergodicity of such Markov chains---a central topic in this paper---is currently lacking.
We therefore believe that this subject demands a separate discussion, which we defer to future work.

\section{Transition Operators, Ergodicity and Weak Ergodicity}\label{section: trans operators and ergodicity}
 
Inferences of the form $\upprev{k}(f \vert x)$ were among the first ones to be thoroughly studied in imprecise Markov chains.
Their study was fundamentally based on the observation that $\upprev{k}(f \vert x)$ can be elegantly rewritten as the $k$-th iteration of the map $\uptrans{} \colon \setofgambles{}(\statespace{}) \to \setofgambles{}(\statespace{})$ defined by 
\begin{align*}
\uptrans{} h (x) 
\coloneqq \sup_{T \in \settrans{}} \, T h (x)
= \sup_{T(x , \cdot) \in \settrans{x}} \, \sum_{y \in \statespace{}} T(x,y) h(y),
\end{align*}
for all $x \in \statespace{}$ and all $h \in \setofgambles{}(\statespace{})$.
Concretely, $\upprev{k}(f \vert x) = [\uptrans{}^k f](x)$ for all $x \in \statespace{}$ and all $k \in \natz{}$ \cite[Theorem~3.1]{deCooman:2009jz}.
The map $\uptrans{}$ therefore plays a similar role as the transition matrix $T$ in traditional Markov chains, which is why it is called the \emph{upper transition operator} corresponding to the set $\settrans{}$.

In an analogous way, inferences of the form $\upprev{\mathrm{av},k}(f \vert x)$ can be obtained as the $k$-th iteration of the map $\smash{\avuptrans{f}{} \colon \setofgambles{}(\statespace{}) \to \setofgambles{}(\statespace{})}$ defined by $\smash{\avuptrans{f}{} h \coloneqq f + \uptrans{} h}$ for all $h \in \setofgambles{}(\statespace{})$.
In particular, if we let $\tilde{m}_{f,0} \coloneqq f = \avuptrans{f}{} (0)$ and 
\begin{align}\label{Eq: recursive expression}
\tilde{m}_{f,k} \coloneqq f + \uptrans{} \tilde{m}_{f,k-1} = \avuptrans{f}{} \tilde{m}_{f,k-1} \text{ for all } k \in \nats{},
\end{align} 
then it follows from \cite[Lemma 41]{8535240} that $\upprev{\mathrm{av},k}(f \vert x) = \tfrac{1}{k+1} \tilde{m}_{f,k}(x)$ for all $x \in \statespace{}$ and all $k \in \natz{}$.
Applying Equation~\eqref{Eq: recursive expression} repeatedly, we find that for all $x \in \statespace{}$:
\begin{align}\label{Eq: recursive expression 2}
\upprev{\mathrm{av},k}(f \vert x) = 
\tfrac{1}{k+1}\tilde{m}_{f,k}(x)
=\tfrac{1}{k+1} [\avuptrans{f}{k} \tilde{m}_{f,0}](x) = \tfrac{1}{k+1} [\avuptrans{f}{k+1}(0)](x).
\end{align}
The same formula can also be obtained as a special case of the results in \cite{10.1007/978-3-030-29765-7_38}.

These expressions for $\upprev{k}(f \vert x)$ and $\upprev{\mathrm{av},k}(f \vert x)$ in terms of the respective operators $\uptrans{}$ and $\avuptrans{f}{}$ are particularly useful when we aim to characterise the limit behaviour of these inferences.
As will be elaborated on in the next section, there are conditions on $\uptrans{}$ that are necessary and sufficient for $\upprev{k}(f \vert x)$ to converge to a limit value that does not depend on the process' initial state $x \in \statespace{}$.
If this is the case for all $f \in \setofgambles{}(\statespace{})$, the imprecise Markov chain is called \emph{ergodic} and we then denote the constant limit value by $\upprev{\infty}(f) \coloneqq \lim_{k \to +\infty} \upprev{k}(f \vert x)$.
Similarly, we call an imprecise Markov chain \emph{weakly ergodic} if, for all $f \in \setofgambles{}(\statespace{})$, $\lim_{k \to +\infty} \upprev{\mathrm{av},k}(f \vert x)$ exists and does not depend on the initial state $x$.
For a weakly ergodic imprecise Markov chain, we denote the common limit value by $\upprev{\mathrm{av},\infty}(f) \coloneqq \lim_{k \to +\infty} \upprev{\mathrm{av},k}(f \vert x)$.
In contrast with standard ergodicity, weak ergodicity and, more generally, the limit behaviour of $\upprev{\mathrm{av},k}(f \vert x)$, is almost entirely unexplored.
The aim of this paper is to remedy this situation.
The main contribution will be a necessary and sufficient condition for an imprecise Markov chain to be weakly ergodic.
As we will see, this condition is weaker than those needed for standard ergodicity, hence our choice of terminology.
The following example shows that this difference already becomes apparent in the precise case.
\begin{example}\label{example 1}
Let $\statespace{} = \{a,b\}$, consider any function $\smash{f = \big[\begin{smallmatrix}
f_a \\ f_b
\end{smallmatrix}\big] \in \setofgambles{}(\statespace{})}$ and assume that $\settrans{}$ consists of a single matrix 
$T = 
\big[\begin{smallmatrix}
0 & 1 \\ 1 & 0
\end{smallmatrix}\big]$.
Clearly, $\uptrans{}$ is not ergodic because $\smash{\uptrans{}^{(2\ell + 1)} f
= T^{(2\ell + 1)} f
= \big[\begin{smallmatrix}
0 & 1 \\ 1 & 0
\end{smallmatrix}\big]f
= \big[\begin{smallmatrix}
f_b \\ f_a
\end{smallmatrix}\big]}
$ and 
$\smash{\uptrans{}^{(2\ell)} f 
= \big[\begin{smallmatrix}
1 & 0 \\ 0 & 1
\end{smallmatrix}\big]f
= \big[\begin{smallmatrix}
f_a \\ f_b
\end{smallmatrix}\big]}$
for all $\ell \in \natz{}$.
$\uptrans{}$ is weakly ergodic though, because
\begin{align*}
\avuptrans{f}{(2\ell)}(0) = \ell \big[\begin{smallmatrix}
f_a + f_b \\ f_a + f_b
\end{smallmatrix}\big]
 \, \text{ and } \, \avuptrans{f}{(2\ell +1)}(0) = f + \uptrans{} \, \avuptrans{f}{(2\ell)}(0) = f + \ell \big[\begin{smallmatrix}
f_a + f_b \\ f_a + f_b
\end{smallmatrix}\big],
\end{align*} 
for all $\ell \in \natz{}$, which implies that $\upprev{\mathrm{av},\infty}(f) \coloneqq \lim_{k \to +\infty} \avuptrans{f}{k}(0)/k = ( f_a + f_b )/2$ exists.
\hfill $\Diamond$
\end{example}

Notably, even if an imprecise Markov chain is ergodic (and hence also weakly ergodic) and therefore both $\upprev{\infty}(f)$ and $\upprev{\mathrm{av},\infty}(f)$ exist, these inferences will not necessarily coincide.
This was first observed in an experimental setting \cite[Section~7.6]{8535240}, but the differences that were observed there were marginal.
The following example shows that these differences can in fact be very substantial.

\begin{example}\label{example 2}
Let $\statespace{} = \{a,b\}$, let $\settrans{a}$ be the set of all probability mass functions on $\statespace{}$ and let $\settrans{b} \coloneqq \{ p \}$ for the probability mass function $p = (p_a , p_b) = (1,0)$ that puts all mass in $a$.
Then, for any $f = \big[\begin{smallmatrix}
f_a \\ f_b
\end{smallmatrix}\big] \in \setofgambles{}(\statespace{})$, we have that
\begin{align*}
\uptrans{}f (x) = 
\begin{aligned}
\begin{cases}
\max f &\text{ if } x=a; \\
f_a &\text{ if } x=b,
\end{cases}
\end{aligned}
\quad \text{ and } \quad
\uptrans{}^{ \, 2} f (x) = 
\begin{aligned}
\begin{cases}
\max \uptrans{} f = \max f &\text{ if } x=a; \\
\uptrans{}f (a) = \max f &\text{ if } x=b.
\end{cases}
\end{aligned}
\end{align*}
It follows that $\uptrans{}^k f = \max f$ for all $k \geq 2$, so the limit upper expectation $\upprev{\infty}(f)$ exists and is equal to $\max f$ for all $f \in \setofgambles{}(\statespace{})$.
In particular, we have that $\upprev{\infty}(\indica{b}) = 1$.  
On the other hand, we find that 
$\smash{\uptrans{}_{\indica{b}}^{(2\ell)}(0) = \ell}$ and $\smash{\uptrans{}_{\indica{b}}^{(2\ell + 1)}(0)} = \smash{\indica{b} + \uptrans{} \, \uptrans{}_{\indica{b}}^{(2\ell)}(0)} = \smash{\big[\begin{smallmatrix}
\ell \\ \ell+1
\end{smallmatrix}\big]}$ for all $\ell \in \natz{}$.
This implies that the upper expectation $\smash{\upprev{\mathrm{av},\infty}(\indica{b})} \coloneqq \smash{\lim_{k \to +\infty} \uptrans{}_{\indica{b}}^k(0)/k}$ exists and is equal to $1/2$.
This value differs significantly from the limit upper expectation $\upprev{\infty}(\indica{b}) = 1$.

In fact, this result could have been expected simply by taking a closer look at the dynamics that correspond to $\settrans{}$. \/
Indeed, it follows directly from $\settrans{}$ \/ that, if the system is in state $b$ at some instant, then it will surely be in $a$ at the next time instant.
Hence, the system can only reside in state $b$ for maximally half of the time, resulting in an upper expected average that converges to $1/2$.
These underlying dynamics have little effect on the limit upper expectation $\upprev{\infty}(\indica{b})$ though, because it is only concerned with the upper expectation of $\indica{b}$ evaluated at a single time instant.
\hfill $\Diamond$
\end{example}

Although we have used sets $\settrans{}$ of transition matrices to define imprecise Markov chains, it should at this point be clear that, if we are interested in the inferences $\upprev{k}(f \vert x)$ and $\upprev{\mathrm{av},k}(f \vert x)$ and their limit values, then it suffices to specify $\uptrans{}$.
In fact, we will henceforth forget about $\settrans{}$ \/ and will assume that $\uptrans{}$ is a \emph{coherent} upper transition operator on $\setofgambles{}(\statespace{})$, meaning that it is an operator from $\setofgambles{}(\statespace{})$ to $\setofgambles{}(\statespace{})$ that satisfies
\begin{enumerate}[leftmargin=*,ref={\upshape{}C\arabic*},label={\upshape{}C\arabic*}.,itemsep=3pt, series=transcoherence]
\item\label{transcoherence: bounds} $\min h  \leq \uptrans{} h \leq \max h$ \hfill [boundedness];
\item\label{transcoherence: subadditivity} $\uptrans{} (h+g) \leq \uptrans{}h + \uptrans{}g$ \hfill [sub-additivity];
\item\label{transcoherence: homogeneity} $\uptrans{}(\lambda h) = \lambda \uptrans{}h$ \hfill [non-negative homogeneity],
\end{enumerate}
for all $h,g \in \setofgambles{}(\statespace)$ and all real $\lambda \geq 0$ \cite{Walley:1991vk,Williams:2007eu,troffaes2014}, and we will regard $\upprev{k}(f \vert x)$ and $\upprev{\mathrm{av},k}(f \vert x)$ as objects that correspond to $\uptrans{}$. 
Our results and proofs will never rely on the fact that $\uptrans{}$ is derived from a set $\settrans{}$ of transition matrices, but will only make use of \ref{transcoherence: bounds}-\ref{transcoherence: homogeneity} and the following \iftoggle{arxiv}{three}{two} properties that are implied by them \cite[Section~2.6.1]{Walley:1991vk}:
\begin{enumerate}[leftmargin=*,ref={\upshape{}C\arabic*},label={\upshape{}C\arabic*}.,itemsep=3pt, resume=transcoherence]
\item\label{transcoherence: constant addivity} $\uptrans{}(\mu + h) = \mu + \uptrans{}h$ \hfill [constant additivity];
\item\label{transcoherence: monotonicity} if $h \leq g$ then $\uptrans{} h \leq \uptrans{} g$ \hfill [monotonicity]\iftoggle{arxiv}{;}{,}
\iftoggle{arxiv}{
\item\label{transcoherence: mixed additivity} $\uptrans{}h - \uptrans{}g \leq \uptrans{} (h-g)$ \hfill [mixed sub-additivity],}{}
\end{enumerate}
for all $h,g \in \setofgambles{}(\statespace)$ and all real $\mu$.
This can be done without loss of generality because an upper transition operator $\smash{\uptrans{}}$ that is defined as an upper envelope of a set $\settrans{}$ of transition matrices---as we did in Section~\ref{section: trans operators and ergodicity}---is always coherent \cite[Theorem~2.6.3]{Walley:1991vk}.
Since properties such as ergodicity and weak ergodicity can be completely characterised in terms of $\uptrans{}$, we will henceforth simply say that $\uptrans{}$ itself is (weakly) ergodic, instead of saying that the corresponding imprecise Markov chain is.

\section{Accessibility Relations and Topical Maps}\label{Sect: accessibility and topical maps}

To characterise ergodicity and weak ergodicity, we will make use of some well-known graph-theoretic concepts, suitably adapted to the imprecise Markov chain setting; we recall the following from \cite{deCooman:2009jz} and \cite{Hermans:2012ie}.
The \emph{upper accessibility graph} $\mathscr{G}(\uptrans{})$ corresponding to $\uptrans{}$ is defined as the graph with vertices $x_1 \cdots x_n \in \statespace{}$, where $n \coloneqq \vert \statespace{} \vert$, with an edge from $x_i$ to $x_j$ if $\uptrans{}\indica{x_j}(x_i) > 0$.
For any two vertices $x_i$ and $x_j$, we say that $x_j$ is \emph{accessible} from $x_i$, denoted by $x_i \to x_j$, if $x_i = x_j$ or if there is a directed path from $x_i$ to $x_j$, which means that there is a sequence $x_i = x'_0, x'_1 , \cdots, x'_m = x_j$ of vertices, with $m \in \nats{}$, such that there is an edge from $x'_{\ell-1}$ to $x'_{\ell}$ for all $\ell \in \{1,\cdots,m\}$.
We say that two vertices $x_i$ and $x_j$ \emph{communicate} and write $x_i \leftrightarrow x_j$ if both $x_i \to x_j$ and $x_j \to x_i$.
The relation $\leftrightarrow$ is an equivalence relation (reflexive, symmetric and transitive) and the equivalence classes are called \emph{communication classes}.
We call the graph $\mathscr{G}(\uptrans{})$ \emph{strongly connected} if any two vertices $x_i$ and $x_j$ in $\mathscr{G}(\uptrans{})$ communicate, or equivalently, if $\statespace{}$ itself is a communication class. 
Furthermore, we say that $\uptrans{}$ (or $\mathscr{G}(\uptrans{})$) has a \emph{top class} $\mathcal{R}$ if
\begin{align*}
\mathcal{R} \coloneqq \{ x \in \statespace{} \colon y \to x \text{ for all } y \in \statespace{} \} \not= \emptyset.
\end{align*}
So, if $\uptrans{}$ has a top class $\mathcal{R}$, then $\mathcal{R}$ is accessible from any vertex in the graph $\mathscr{G}(\uptrans{})$.
As a fairly immediate consequence, it follows that $\mathcal{R}$ is a communication class that is \emph{maximal} or \emph{undominated}, meaning that $x \not\to y$ for all $x \in \mathcal{R}$ and all $y \in \mathcal{R}^c$.
In fact, it is the only such maximal communication class.

Having a top class is necessary for $\uptrans{}$ to be ergodic, but it is not sufficient.
Sufficiency additionally requires that the top class $\mathcal{R}$ satisfies \cite[Proposition~3]{Hermans:2012ie}: 
\begin{enumerate}[itemsep=3pt,align=parleft,labelwidth=1.4em,leftmargin =\dimexpr\labelwidth+\labelsep\relax]
\item[E1.] $(\forall x \in \mathcal{R})(\exists k^\ast \in \nats{})(\forall k \geq k^\ast) \ \min \uptrans{}^k \indica{x} > 0$ \hfill [Regularity];
\item[E2.] $(\forall x \in \mathcal{R}^c)(\exists k \in \nats{}) \  \uptrans{}^k \indica{\mathcal{R}^c}(x) < 1$ \hfill [Absorbing].
\end{enumerate} 
We will say that $\uptrans{}$ is \emph{top class regular} (TCR) if it has a top class that is regular, and analogously for \emph{top class absorbing} (TCA).
Top class regularity represents aperiodic behaviour: it demands that there is some time instant $k^\ast \in \nats{}$ such that all of the elements in the top class $\mathcal{R}$ are accessible from each other in $k$ steps, for any $k \geq k^\ast$.
In the case of traditional Markov chains, top class regularity suffices as a necessary and sufficient condition for ergodicity \cite{kemeny1960finite,deCooman:2009jz}.
However, in the imprecise case, we need the additional condition of being top class absorbing, which ensures that the top class will eventually be reached.  
It requires that, if the process starts from any state $x \in \mathcal{R}^c$, the lower probability that it will ever transition to $\mathcal{R}$ is strictly positive.
We refer to \cite{deCooman:2009jz} for more details.
From a practical point of view, an important feature of both of these accessibility conditions is that they can be easily checked in practice~\cite{Hermans:2012ie}.

The characterisation of ergodicity using (TCR) and (TCA) was strongly inspired by the observation that upper transition operators are part of a specific collection of order-preserving maps, called \emph{topical maps}.
These are maps $F \colon \reals{}^n \to \reals{}^n$ that satisfy 
\begin{enumerate}[leftmargin=*,ref={\upshape{}T\arabic*},label={\upshape{}T\arabic*}.,itemsep=3pt, series=topical]
\item\label{topical: constant addivity} $F(\mu + h) = \mu + Fh$ \hfill [constant additivity];
\item\label{topical: monotonicity} if $h \leq g$ then $F(h) \leq F(g)$ \hfill [monotonicity],
\end{enumerate} 
for all $h,g \in \reals{}^n$ and all $\mu \in \reals{}$.
To show this, we identify $\setofgambles{}(\statespace{})$ with the finite-dimensional linear space $\reals{}^n$, with $n = \vert \statespace{} \vert$; this is clearly possible because both are isomorph.
That every coherent upper transition operator is topical now follows trivially from \ref{transcoherence: constant addivity} and \ref{transcoherence: monotonicity}.
What is perhaps less obvious, but can be derived in an equally trivial way, is that the operator $\avuptrans{f}{}$ is also topical.
This allows us to apply results for topical maps to $\smash{\avuptrans{f}{}}$ in order to find necessary and sufficient conditions for weak ergodicity.

\section{A Sufficient Condition for Weak Ergodicity}

As a first step, we aim to find sufficient conditions for the existence of $\upprev{\mathrm{av},\infty}(f)$.
To that end, recall from Section~\ref{section: trans operators and ergodicity} that if $\upprev{\mathrm{av},\infty}(f)$ exists, it is equal to the limit $\smash{\lim_{k \to +\infty} \avuptrans{f}{k} (0) / k}$.
Then, since $\avuptrans{f}{}$ is topical, the following lemma implies that it is also equal to $\smash{\lim_{k \to +\infty} \avuptrans{f}{k} h / k}$ for any $h \in \setofgambles{}(\statespace{})$.
\begin{lemma}\label{lemma: cycle time exists independently of starting point}\emph{\cite[Lemma 3.1]{GUNAWARDENA2003141}}
Consider any topical map $F \colon \reals{}^n \to \reals{}^n$. 
If the limit $\lim_{k \to +\infty} F^k h / k$ exists for some $h \in \reals{}^n$, then the limit exists for all $h \in \reals{}^n$ and they are all equal.
\end{lemma}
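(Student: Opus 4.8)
The plan is to exploit the fact that every topical map is nonexpansive with respect to the supremum norm, and then to finish with a routine triangle-inequality argument. The key observation is that two orbits $F^k h$ and $F^k g$ starting from different points can never drift apart in sup-norm, so once we divide by $k$ their difference is forced to $0$; this pins all the limits to the same value whenever a single one of them is known to exist.

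First I would establish the nonexpansiveness bound: $\supnorm{Fh - Fg} \leq \supnorm{h - g}$ for all $h,g \in \reals{}^n$. To see this, put $\mu \coloneqq \supnorm{h - g}$, so that the componentwise inequalities $g - \mu \leq h \leq g + \mu$ hold. Applying monotonicity \ref{topical: monotonicity} gives $F(g - \mu) \leq F(h) \leq F(g + \mu)$, and constant additivity \ref{topical: constant addivity} rewrites this as $Fg - \mu \leq Fh \leq Fg + \mu$, which is exactly $\supnorm{Fh - Fg} \leq \mu$. Iterating this bound immediately yields $\supnorm{F^k h - F^k g} \leq \supnorm{h - g}$ for every $k \in \nats{}$.

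With nonexpansiveness in hand, suppose $\lim_{k \to +\infty} F^k h / k = v$ exists for some fixed $h \in \reals{}^n$. For any other $g \in \reals{}^n$, I would then estimate, for every $k \in \nats{}$,
\begin{align*}
\supnorm{\frac{F^k g}{k} - v}
&\leq \supnorm{\frac{F^k g}{k} - \frac{F^k h}{k}} + \supnorm{\frac{F^k h}{k} - v} \\
&\leq \frac{\supnorm{h - g}}{k} + \supnorm{\frac{F^k h}{k} - v}.
\end{align*}
Both terms on the right tend to $0$ as $k \to +\infty$: the first because $\supnorm{h - g}$ is a fixed constant, and the second by the standing assumption. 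Hence $\lim_{k \to +\infty} F^k g / k = v$ as well, so the limit exists for every starting point $g$ and always equals the same vector $v$.

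Since the argument collapses to elementary estimates once nonexpansiveness is available, there is no genuine obstacle here. The only point deserving care is the derivation of the nonexpansive bound itself, where one must invoke \ref{topical: constant addivity} and \ref{topical: monotonicity} in the right order; everything afterwards is a bound of the form $\supnorm{h-g}/k \to 0$ combined with the hypothesis, and it relies on no deeper structural feature of $F$.
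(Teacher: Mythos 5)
Your proof is correct: the nonexpansiveness bound $\supnorm{F^k h - F^k g} \leq \supnorm{h-g}$ derived from \ref{topical: constant addivity} and \ref{topical: monotonicity}, followed by the division-by-$k$ triangle-inequality estimate, is exactly the standard argument. The paper does not prove this lemma itself but cites \cite[Lemma~3.1]{GUNAWARDENA2003141}, whose proof (and the non-expansiveness property \cite[Proposition~1.1]{GUNAWARDENA2003141}, which the paper invokes elsewhere) proceeds along precisely the same lines as yours.
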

Hence, if $\smash{\lim_{k \to +\infty} \avuptrans{f}{k} h / k}$ converges to a constant vector $\mu$ for some $h \in \setofgambles{}(\statespace{})$, then $\upprev{\mathrm{av},\infty}(f)$ exists and is equal to $\mu$.
This condition is clearly satisfied if the map $\smash{\avuptrans{f}{}}$ has an (additive) eigenvector $h \in \setofgambles{}(\statespace{})$, meaning that $\smash{\avuptrans{f}{k} h = h + k \mu}$ for some $\mu \in \reals{}$ and all $k \in \natz{}$.
In that case, we have that $\upprev{\mathrm{av},\infty}(f) = \mu$, where $\mu$ is called the eigenvalue corresponding to $h$.

To find conditions that guarantee the existence of an eigenvector of $\avuptrans{f}{}$, we will make use of results from \cite{10.2307/3845053} and \cite{GUNAWARDENA2003141}.
There, accessibility graphs are defined in a slightly different way:
for any topical map $F \colon \reals{}^n \to \reals{}^n$, they let $\mathscr{G}'(F)$ be the graph with vertices $v_{1} , \cdots , v_{n}$ and an edge from $v_i$ to $v_j$ if $\smash{\lim_{\alpha \to +\infty} [F(\alpha \indica{v_j})](v_i) = +\infty}$.
Subsequently, for such a graph $\mathscr{G}'(F)$, the accessibility relation $\cdot \to \cdot$ and corresponding notions (e.g. `strongly connected', `top class', \dots) are defined as in Section~\ref{Sect: accessibility and topical maps}.
If we identify the vertices $v_{1} , \cdots , v_{n}$ in $\mathscr{G}'(\uptrans{})$ and $\smash{\mathscr{G}'(\avuptrans{f}{})}$ with the different states $x_{1} , \cdots , x_{n}$ in $\statespace{}$, this can in particular be done for the topical maps $\uptrans{}$ and $\avuptrans{f}{}$.
The following results show that the resulting graphs coincide with the one defined in Section~\ref{Sect: accessibility and topical maps}.

\begin{lemma}\label{lemma: edge}
For any two vertices $x$ and $y$ in $\mathscr{G}'(\uptrans{})$, there is an edge from $x$ to $y$ in $\mathscr{G}'(\uptrans{})$ if and only if there is an edge from $x$ to $y$ in $\mathscr{G}(\uptrans{})$. 
\end{lemma}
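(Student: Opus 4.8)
The plan is to show directly that the edge condition defining $\mathscr{G}'(\uptrans{})$ --- namely $\lim_{\alpha \to +\infty} [\uptrans{}(\alpha \indica{y})](x) = +\infty$ --- is equivalent to the edge condition defining $\mathscr{G}(\uptrans{})$ --- namely $\uptrans{}\indica{y}(x) > 0$. The crucial observation is that non-negative homogeneity \ref{transcoherence: homogeneity} collapses the entire limit over $\alpha$ into a single scalar multiplication, which makes the scaling behaviour completely transparent.

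First I would fix two vertices $x, y \in \statespace{}$ and apply \ref{transcoherence: homogeneity} with $\lambda = \alpha$ and $h = \indica{y}$. Since we are letting $\alpha \to +\infty$, we may restrict attention to $\alpha > 0$, so homogeneity yields $\uptrans{}(\alpha \indica{y})(x) = \alpha \, \uptrans{}\indica{y}(x)$ for all such $\alpha$. The limit in question therefore equals $\lim_{\alpha \to +\infty} \alpha \, \uptrans{}\indica{y}(x)$, i.e. the limit of a fixed scalar multiplied by $\alpha$.

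Next I would record that this scalar is non-negative. By boundedness \ref{transcoherence: bounds} we have $\min \indica{y} \leq \uptrans{}\indica{y} \leq \max \indica{y}$, and since $\min \indica{y} = 0$ this gives $\uptrans{}\indica{y}(x) \geq 0$ (indeed it lies in $[0,1]$). The equivalence then follows from the elementary behaviour of $\alpha \mapsto \alpha c$ for a fixed $c \geq 0$: if $c = \uptrans{}\indica{y}(x) > 0$ then $\alpha \, \uptrans{}\indica{y}(x) \to +\infty$, whereas if $c = \uptrans{}\indica{y}(x) = 0$ then $\alpha \, \uptrans{}\indica{y}(x) = 0$ for every $\alpha$, so the limit equals $0 \neq +\infty$. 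Hence the $\mathscr{G}'(\uptrans{})$-edge condition holds precisely when $\uptrans{}\indica{y}(x) > 0$, which is exactly the $\mathscr{G}(\uptrans{})$-edge condition.

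There is essentially no obstacle here: the statement reduces entirely to homogeneity \ref{transcoherence: homogeneity}. The only point requiring a moment's care is ensuring $\uptrans{}\indica{y}(x) \geq 0$, so that the $c = 0$ case genuinely produces a finite limit rather than $-\infty$; this is immediate from \ref{transcoherence: bounds}. I expect the analogous statement for $\avuptrans{f}{}$ (presumably the companion lemma) to be the more delicate one, since $\avuptrans{f}{}$ is only topical and need not be homogeneous, so the same shortcut will not be available there.
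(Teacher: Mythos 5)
Your proof is correct and follows essentially the same route as the paper's own argument: apply non-negative homogeneity \ref{transcoherence: homogeneity} to reduce the limit to $\lim_{\alpha\to+\infty}\alpha\,\uptrans{}\indica{y}(x)$, then use boundedness \ref{transcoherence: bounds} to conclude that this limit is $+\infty$ precisely when $\uptrans{}\indica{y}(x)>0$. Your extra remark spelling out the $c=0$ case is a fine (if slightly more explicit) rendering of the paper's "this condition reduces to $\uptrans{}\indica{y}(x)>0$."
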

\begin{proof}
Consider any two vertices $x$ and $y$ in the graph $\mathscr{G}'(\uptrans{})$.
Then there is an edge from $x$ to $y$ if $\lim_{\alpha \to +\infty} [\uptrans{}(\alpha \indica{y})](x) = +\infty$. 
By non-negative homogeneity [\ref{transcoherence: homogeneity}], this is equivalent to the condition that $\lim_{\alpha \to +\infty} \alpha [\uptrans{}\indica{y}](x) = +\infty$.
Since moreover $0 \leq \uptrans{}\indica{y} \leq 1$ by \ref{transcoherence: bounds}, this condition reduces to $\uptrans{}\indica{y}(x) > 0$.
\qed
\end{proof}

\begin{corollary}\label{corollary: graphs are identical}
The graphs $\mathscr{G}'(\avuptrans{f}{})$, $\mathscr{G}'(\uptrans{})$ and $\mathscr{G}(\uptrans{})$ are identical.
\end{corollary}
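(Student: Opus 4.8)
The plan is to build directly on Lemma~\ref{lemma: edge}, which already establishes that $\mathscr{G}'(\uptrans{})$ and $\mathscr{G}(\uptrans{})$ have exactly the same edges and are therefore identical. Since all three graphs are defined on the same vertex set (the states $x_1,\dots,x_n$, identified with the vertices $v_1,\dots,v_n$), it suffices to show that $\mathscr{G}'(\avuptrans{f}{})$ also shares this common edge set; the accessibility relation and all derived notions then coincide automatically.

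First I would fix any two vertices $x$ and $y$ and unpack the edge condition for $\mathscr{G}'(\avuptrans{f}{})$. Recalling the definition $\avuptrans{f}{} h = f + \uptrans{} h$, we have for every $\alpha \geq 0$ that
\begin{align*}
[\avuptrans{f}{}(\alpha \indica{y})](x) = f(x) + [\uptrans{}(\alpha \indica{y})](x).
\end{align*}
Here $f(x)$ is a fixed real number that does not depend on $\alpha$, so adding it cannot change whether the expression diverges to $+\infty$ as $\alpha \to +\infty$. Hence $\lim_{\alpha \to +\infty}[\avuptrans{f}{}(\alpha \indica{y})](x) = +\infty$ if and only if $\lim_{\alpha \to +\infty}[\uptrans{}(\alpha \indica{y})](x) = +\infty$, which is precisely the defining condition for an edge from $x$ to $y$ in $\mathscr{G}'(\uptrans{})$. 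Thus $\mathscr{G}'(\avuptrans{f}{})$ and $\mathscr{G}'(\uptrans{})$ have the same edges.

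Combining this equivalence with Lemma~\ref{lemma: edge} gives that $\mathscr{G}'(\avuptrans{f}{})$, $\mathscr{G}'(\uptrans{})$ and $\mathscr{G}(\uptrans{})$ all carry the same edge set on the same vertices, and are therefore identical as directed graphs. I expect no genuine obstacle here: the whole content beyond Lemma~\ref{lemma: edge} is the defining relation $\avuptrans{f}{} = f + \uptrans{}$ together with the trivial observation that a finite additive shift of a real sequence leaves divergence to $+\infty$ invariant. The only point worth stating explicitly is that equality of edge sets forces equality of the graphs, so that the accessibility-based notions (top class, strong connectivity, etc.) transfer verbatim between the three graphs.
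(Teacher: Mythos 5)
Your proposal is correct and follows exactly the paper's route: Lemma~\ref{lemma: edge} for the identity of $\mathscr{G}'(\uptrans{})$ and $\mathscr{G}(\uptrans{})$, plus the observation that $\mathscr{G}'(\avuptrans{f}{}) = \mathscr{G}'(\uptrans{})$ follows from the definition $\avuptrans{f}{}h = f + \uptrans{}h$. The only difference is that you spell out explicitly what the paper calls ``straightforward'', namely that the constant shift $f(x)$ cannot affect divergence to $+\infty$.
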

\begin{proof}
Lemma~\ref{lemma: edge} implies that $\mathscr{G}'(\uptrans{})$ and $\mathscr{G}(\uptrans{})$ are identical.
Moreover, that $\mathscr{G}'(\avuptrans{f}{})$ is equal to $\mathscr{G}'(\uptrans{})$, follows straightforwardly from the definition of $\avuptrans{f}{}$.
\qed
\end{proof}
 
In principle, we could use this result to directly obtain the desired condition for the existence of an eigenvector from \cite[Theorem~2]{10.2307/3845053}.
However, \cite[Theorem~2]{10.2307/3845053} is given in a multiplicative framework and would need to be reformulated in an additive framework in order to be applicable to the map $\avuptrans{f}{}$; see \cite[Section~2.1]{10.2307/3845053}.
This can be achieved with a bijective transformation, but we prefer to not do so because it would require too much extra terminology and notation.
Instead, we will derive an additive variant of \cite[Theorem~2]{10.2307/3845053} directly from \cite[Theorem~9]{10.2307/3845053} and \cite[Theorem~10]{10.2307/3845053}.

The first result establishes that the existence of an eigenvector is equivalent to the fact that trajectories are bounded with respect to the Hilbert semi-norm $\hnorm{\cdot}$, defined by $\hnorm{h} \coloneqq \max h - \min h$ for all $h \in \reals{}^n$.

\begin{theorem}\label{theorem: eigenvector iff bounded}\emph{\cite[Theorem~9]{10.2307/3845053}}
Let $F \colon \reals{}^n \to \reals{}^n$ be a topical map.
Then $F$ has an eigenvector in $\reals{}^n$ if and only if $\left\{\hnorm{F^k h} \colon k \in \nats{} \right\}$ is bounded for some (and hence all) $h \in \reals{}^n$.
\end{theorem}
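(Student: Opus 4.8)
The plan is to prove the two implications separately, spending essentially all the effort on manufacturing an eigenvector from a bounded trajectory. Throughout I would lean on the fact that every topical map is non-expansive in the Hilbert semi-norm: for $h,g\in\reals^n$, setting $a\coloneqq\min(h-g)$ and $b\coloneqq\max(h-g)$ gives $g+a\leq h\leq g+b$, so monotonicity~\ref{topical: monotonicity} and constant additivity~\ref{topical: constant addivity} yield $Fg+a\leq Fh\leq Fg+b$ and hence $\hnorm{Fh-Fg}\leq b-a=\hnorm{h-g}$. I would also use that $\hnorm{\cdot}$ is a semi-norm (so the triangle inequality holds) and that it is invariant under adding a constant, i.e. $\hnorm{\mu+h}=\hnorm{h}$ for all $\mu\in\reals$.

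For the forward implication, suppose $F$ has an eigenvector $h_0$, so $Fh_0=\mu+h_0$ for some $\mu\in\reals$ and therefore $F^kh_0=k\mu+h_0$ for all $k$. Constant-invariance then gives $\hnorm{F^kh_0}=\hnorm{h_0}$, which is bounded in $k$; this settles the ``for some $h$'' part with $h=h_0$. For arbitrary $h$, the triangle inequality and non-expansiveness give $\hnorm{F^kh}\leq\hnorm{F^kh-F^kh_0}+\hnorm{F^kh_0}\leq\hnorm{h-h_0}+\hnorm{h_0}$, a bound independent of $k$; this settles the ``hence all'' part.

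The substantial direction is the converse. The natural move is to pass to the quotient space $\reals^n/\reals$, on which $\hnorm{\cdot}$ descends to a genuine norm $\norm{\cdot}$ (precisely because it annihilates the constants) and on which $F$, by constant additivity~\ref{topical: constant addivity}, induces a well-defined self-map $\bar F$. By the non-expansiveness above, $\bar F$ is non-expansive for $\norm{\cdot}$, the hypothesis that $\{\hnorm{F^kh}\}$ is bounded says exactly that the orbit $\{\bar F^k\bar h\}$ is bounded, and an eigenvector of $F$ corresponds precisely to a fixed point of $\bar F$ (since $\bar F\bar h=\bar h$ means $Fh-h$ is constant). It therefore suffices to show that a non-expansive self-map of the finite-dimensional normed space $\reals^n/\reals$ with a bounded orbit has a fixed point. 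To produce one I would fix a base point $\bar h$ and, for each $\varepsilon\in(0,1)$, consider $G_\varepsilon(y)\coloneqq\varepsilon\bar h+(1-\varepsilon)\bar F(y)$, which is an $(1-\varepsilon)$-contraction and hence has a unique fixed point $y_\varepsilon$ by the Banach fixed-point theorem; any accumulation point of $(y_\varepsilon)$ as $\varepsilon\to0^+$ is then, by continuity of $\bar F$, a fixed point of $\bar F$.

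The main obstacle is exactly this last step: one must show that $(y_\varepsilon)_{\varepsilon\in(0,1)}$ stays bounded as $\varepsilon\to0^+$, so that an accumulation point exists. Mere non-expansiveness is not enough here — tracking the contraction iterates only yields the useless estimate $\norm{y_\varepsilon-\bar h}=O(1/\varepsilon)$ — and the boundedness of the orbit must be fed in essentially, which is precisely where the topical (monotone, constant-additive) structure does the real work. This is the heart of the nonlinear Perron--Frobenius argument of Gaubert and Gunawardena and is exactly what the cited Theorem~9 supplies; I would either invoke their fixed-point analysis directly, or alternatively reach a fixed point through a compactness argument on the $\omega$-limit set of the normalised orbit $w_k\coloneqq F^kh-\min(F^kh)$, on which $\bar F$ restricts to a surjective isometry, and then use the order structure to rule out purely periodic behaviour and thereby locate a genuine fixed point.
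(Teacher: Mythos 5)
Your forward implication and the ``some (and hence all)'' part are correct, and your reduction of the converse to a fixed-point problem for the induced map $\bar F$ on the quotient $\reals^n/\reals$ is sound. But the converse---the only substantive direction---is never actually proved. The Krasnoselskii-type regularization $G_\varepsilon$ stalls exactly where you say it does (boundedness of $(y_\varepsilon)$ as $\varepsilon\to0^+$), and neither of your two escape routes is a proof: ``invoking their fixed-point analysis directly'' is circular, since Theorem~\ref{theorem: eigenvector iff bounded} \emph{is} the result being proved, and the $\omega$-limit-set sketch (``use the order structure to rule out purely periodic behaviour'') is a hope, not an argument. Note also that the paper itself offers no proof to compare against: it imports this statement by citation from Gaubert and Gunawardena, adding only the remark that non-expansiveness in the Hilbert semi-norm yields the ``some hence all'' equivalence. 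So a blind proof has to stand entirely on its own, and yours has a genuine hole at its center.

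The irony is that your reduction is one short step away from a complete proof, and your diagnosis that ``mere non-expansiveness is not enough'' is wrong in finite dimensions. The missing lemma is true in full generality: \emph{any non-expansive self-map $G$ of a finite-dimensional normed space having one bounded orbit $(y_k)_{k\in\nats}$ has a fixed point.} Indeed, set $r(y)\coloneqq\limsup_{k}\norm{y_k-y}$. Each $y\mapsto\sup_{k\geq m}\norm{y_k-y}$ is convex and finite, and these functions decrease pointwise to $r$, so $r$ is convex, finite (hence continuous) and coercive, since $r(y)\geq\norm{y}-\sup_k\norm{y_k}$. Its set of minimisers $C$ is therefore nonempty, compact and convex. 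Non-expansiveness gives $\norm{y_k-Gy}=\norm{Gy_{k-1}-Gy}\leq\norm{y_{k-1}-y}$, hence $r(Gy)\leq r(y)$ for every $y$, so $G(C)\subseteq C$; Brouwer's fixed-point theorem applied to the continuous map $G\vert_C\colon C\to C$ then produces a fixed point. Applying this to $\bar F$ on $\reals^n/\reals$ (where the Hilbert semi-norm descends to a norm, $\bar F$ is non-expansive by the estimate you already proved, and the orbit of $\bar h$ is bounded by hypothesis) yields a fixed point of $\bar F$, which lifts to an additive eigenvector of $F$ exactly as you describe. With this asymptotic-center argument in place of your $G_\varepsilon$ construction, your proof becomes complete and self-contained.
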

That the boundedness of a single trajectory indeed implies the boundedness of all trajectories follows from the non-expansiveness of a topical map with respect to the Hilbert semi-norm \cite{10.2307/3845053}.
The second result that we need uses the notion of a \emph{super-eigenspace}, defined for any topical map $F$ and any $\mu \in \reals{}$ as the set $S^\mu(F) \coloneqq \{h \in \reals{}^n \colon F h \leq h + \mu\}$.

\begin{theorem}\label{theorem: strongly connected then supereigenspace bounded}\emph{\cite[Theorem~10]{10.2307/3845053}}
Let $F \colon \reals{}^n \to \reals{}^n$ be a topical map such that the associated graph $\mathscr{G}'(F)$ is strongly connected.
Then all of the super-eigenspaces are bounded in the Hilbert semi-norm.
\end{theorem}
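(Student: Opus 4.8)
The plan is to exploit the defining edge condition of $\mathscr{G}'(F)$ together with the super-eigenspace inequality $Fh \leq h + \mu$ in order to propagate ``largeness'' of $h$ backwards along directed paths. Since the Hilbert semi-norm is invariant under adding a constant, and since $S^\mu(F)$ is closed under adding constants (if $Fh \leq h+\mu$ then $F(h+c) = c + Fh \leq (h+c)+\mu$ by \ref{topical: constant addivity}), I may normalise each $h \in S^\mu(F)$ so that $\min h = 0$. Then $h \geq 0$ and $\hnorm{h} = \max h$, so it suffices to bound $\max h$ uniformly over $S^\mu(F)$.

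First I would fix, for each ordered pair of vertices $(i,j)$, the single-argument map $g_{ij}(\alpha) \coloneqq [F(\alpha \indica{j})](i)$. By monotonicity [\ref{topical: monotonicity}] this map is non-decreasing in $\alpha$, and by the definition of $\mathscr{G}'(F)$ there is an edge from $i$ to $j$ exactly when $g_{ij}(\alpha) \to +\infty$ as $\alpha \to +\infty$. The key local estimate is this: for a normalised $h$ we have $h \geq h(j)\indica{j}$ pointwise (both sides equal $h(j)$ at index $j$, while elsewhere the left side is $\geq 0$), so monotonicity gives $[Fh](i) \geq g_{ij}(h(j))$; combining this with $[Fh](i) \leq h(i) + \mu$ yields
\begin{align*}
h(i) \geq g_{ij}(h(j)) - \mu \quad \text{whenever there is an edge } i \to j .
\end{align*}
Thus a large value of $h$ at the head $j$ of an edge forces a large value at the tail $i$.

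Next I would invoke strong connectivity. Let $x^{**}$ attain $\max h = M$ and let $x^{*}$ attain $\min h = 0$. Strong connectivity provides a directed, and without loss of generality simple, path $x^{*} = w_0 \to w_1 \to \cdots \to w_m = x^{**}$ with $m \leq n-1$. Iterating the local estimate along the edges $w_{\ell-1} \to w_\ell$, starting from $h(w_m) = M$, produces a chain of inequalities whose final left-hand side is $h(w_0) = h(x^{*}) = 0$, while the right-hand side is a fixed composition $G(M)$ of the non-decreasing, unbounded maps $g_{w_{\ell-1},w_\ell}$, interleaved with subtractions of $\mu$. Since each such $g$ tends to $+\infty$ and composition preserves this, $G(M) \to +\infty$ as $M \to +\infty$; the forced inequality $0 \geq G(M)$ therefore caps $M$ by the generalised inverse $\sup\{M' \colon G(M') \leq 0\} < \infty$. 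Because there are only finitely many ordered pairs $(x^{*}, x^{**})$ and finitely many simple paths joining them, taking the worst such bound yields a single finite constant, depending only on $F$ and $\mu$, that bounds $\hnorm{h} = \max h$ for every $h \in S^\mu(F)$.

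I expect the main obstacle to be the bookkeeping that turns the purely qualitative edge condition ``$g_{ij} \to +\infty$'' into a quantitative cap on $M$ that is \emph{uniform} over $h$: one must verify that the finite composition $G$ is genuinely unbounded, so that its generalised inverse is finite, and that the finitely many constants $\mu$ accumulated along a path of bounded length $m \leq n-1$ do not destroy this unboundedness. By contrast, the normalisation step and the pointwise bound $h \geq h(j)\indica{j}$ are routine.
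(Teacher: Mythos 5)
The paper never proves this statement itself: it is imported verbatim as Theorem~10 of the cited Gaubert--Gunawardena reference, so there is no internal proof to compare against. Your argument is correct, and it is essentially the standard proof of that cited result: you normalise using the facts that $S^\mu(F)$ and the Hilbert semi-norm are invariant under adding constants, establish the local edge estimate $h(i) \geq g_{ij}(h(j)) - \mu$ from monotonicity and $h \geq h(j)\indica{j}$, and then back-propagate largeness along a directed path from the minimising vertex to the maximising vertex, where unboundedness of each edge map $g_{ij}$ (the defining property of an edge of $\mathscr{G}'(F)$) makes the composed bound $G(M)$ blow up, capping $M = \hnorm{h}$; finiteness of the number of ordered vertex pairs and simple paths makes the cap uniform over $S^\mu(F)$. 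The only points worth making explicit are the degenerate cases (a constant $h$, or $n=1$, where $\hnorm{h}=0$ trivially), but these do not affect the validity of the argument.
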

Together, these theorems imply that any topical map $F \colon \reals{}^n \to \reals{}^n$ for which the graph $\mathscr{G}'(F)$ is strongly connected, has an eigenvector.
The connection between both is provided by the fact that trajectories cannot leave an eigenspace.
The following result formalises this.

\begin{theorem}\label{theorem: strongly connected then eigenvector}
Let $F \colon \reals{}^n \to \reals{}^n$ be a topical map such that the associated graph $\mathscr{G}'(F)$ is strongly connected.
Then $F$ has an eigenvector in $\reals{}^n$.
\end{theorem}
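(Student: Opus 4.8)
The plan is to combine the two preceding results: Theorem~\ref{theorem: strongly connected then supereigenspace bounded} tells us that strong connectivity of $\mathscr{G}'(F)$ forces every super-eigenspace to be bounded in the Hilbert semi-norm, while Theorem~\ref{theorem: eigenvector iff bounded} tells us that an eigenvector exists as soon as a single trajectory is bounded in the Hilbert semi-norm. The missing link is the observation, flagged in the text above, that a trajectory which starts inside a super-eigenspace can never leave it. So the strategy is to trap an arbitrary trajectory inside some super-eigenspace and then read off its boundedness for free.

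First I would exhibit a nonempty super-eigenspace containing a convenient starting point. Taking $h \coloneqq 0$ and $\mu \coloneqq \max F(0)$, we have $F(0) \leq \mu = 0 + \mu$ componentwise, so $0 \in S^\mu(F)$.

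Next I would show by induction on $k$ that the entire trajectory stays inside this super-eigenspace, i.e.\ that $F^k(0) \in S^\mu(F)$, equivalently $F^{k+1}(0) \leq F^k(0) + \mu$, for all $k \in \natz{}$. The base case $k = 0$ is the inequality $F(0) \leq 0 + \mu$ just established. For the inductive step, I apply $F$ to the induction hypothesis $F^{k+1}(0) \leq F^k(0) + \mu$ and use monotonicity [\ref{topical: monotonicity}] followed by constant additivity [\ref{topical: constant addivity}] to obtain $F^{k+2}(0) = F(F^{k+1}(0)) \leq F(F^k(0) + \mu) = F^{k+1}(0) + \mu$, which is exactly $F^{k+1}(0) \in S^\mu(F)$. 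This invariance---the concrete form of ``trajectories cannot leave a super-eigenspace''---is the conceptual heart of the argument, even though it collapses to a one-line induction precisely because the two defining properties of a topical map are all that is required.

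Finally I would close the loop. Since $\mathscr{G}'(F)$ is strongly connected, Theorem~\ref{theorem: strongly connected then supereigenspace bounded} gives $\sup\{\hnorm{g} \colon g \in S^\mu(F)\} < +\infty$. As $F^k(0) \in S^\mu(F)$ for every $k$, the trajectory $\{\hnorm{F^k(0)} \colon k \in \nats{}\}$ is bounded by this supremum, and Theorem~\ref{theorem: eigenvector iff bounded} then delivers an eigenvector of $F$ in $\reals{}^n$. The one point deserving care is that both boundedness statements live in the Hilbert semi-norm rather than an ordinary norm; this is unavoidable, since adding a constant to any element of $S^\mu(F)$ keeps it in $S^\mu(F)$, so $S^\mu(F)$ is never bounded in the usual sense, and it is exactly the Hilbert semi-norm that the eigenvector criterion of Theorem~\ref{theorem: eigenvector iff bounded} is phrased in.
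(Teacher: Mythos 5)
Your proposal is correct and follows essentially the same route as the paper's own proof: trap a trajectory inside a super-eigenspace $S^\mu(F)$ via the monotonicity/constant-additivity induction, then combine Theorem~\ref{theorem: strongly connected then supereigenspace bounded} (boundedness of super-eigenspaces) with Theorem~\ref{theorem: eigenvector iff bounded} (bounded trajectory implies eigenvector). The only cosmetic difference is that you specialise to the starting point $h=0$ with $\mu=\max F(0)$, whereas the paper runs the identical argument from an arbitrary $h$ with $\mu \geq \max(Fh-h)$; your closing remark about the Hilbert semi-norm being the right notion (since $S^\mu(F)$ is invariant under adding constants) is a correct and welcome clarification.
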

\begin{proof}
Consider any $h\in\reals{}^n$ and any $\mu\in\reals$ such that $\max(Fh - h) \leq \mu$. Then $Fh\leq h+\mu$, so $h\in S^\mu(F)$.
Now notice that $F(Fh) \leq F(h +\mu) = Fh +\mu$ because of \ref{topical: constant addivity} and \ref{topical: monotonicity}, which implies that also $Fh \in S^\mu(F)$.
In the same way, we can also deduce that $F^{2} h \in S^\mu(F)$ and, by repeating this argument, that the whole trajectory corresponding to $h$ remains in $S^\mu(F)$.
This trajectory is bounded because of Theorem~\ref{theorem: strongly connected then supereigenspace bounded}, which by Theorem~\ref{theorem: eigenvector iff bounded} guarantees the existence of an eigenvector. 
\qed
\end{proof}
In particular, if $\mathscr{G}'(\avuptrans{f}{})$ is strongly connected then $\avuptrans{f}{}$ has an eigenvector, which on its turn implies the existence of $\smash{\upprev{\mathrm{av},\infty}(f)}$ as explained earlier.
If we combine this observation with Corollary~\ref{corollary: graphs are identical}, we obtain the following result.

\begin{proposition}\label{proposition: strongly connected then eigenvector}
An upper transition operator $\uptrans{}$ is weakly ergodic if the associated graph $\mathscr{G}(\uptrans{})$ is strongly connected.
\end{proposition}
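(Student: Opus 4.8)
The plan is to assemble the results developed earlier in this section, since the proposition follows essentially by synthesis. Because weak ergodicity is a statement about \emph{every} $f \in \setofgambles{}(\statespace{})$, I would fix an arbitrary $f \in \setofgambles{}(\statespace{})$ and show that $\upprev{\mathrm{av},\infty}(f)$ exists and does not depend on the initial state.

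First I would invoke Corollary~\ref{corollary: graphs are identical}: since $\mathscr{G}(\uptrans{})$ is assumed strongly connected and $\mathscr{G}'(\avuptrans{f}{})$ coincides with $\mathscr{G}(\uptrans{})$, the graph $\mathscr{G}'(\avuptrans{f}{})$ is strongly connected as well. As $\avuptrans{f}{}$ is topical---established in Section~\ref{Sect: accessibility and topical maps}---Theorem~\ref{theorem: strongly connected then eigenvector} then supplies an eigenvector $h \in \reals{}^n$ of $\avuptrans{f}{}$, that is, some $\mu \in \reals{}$ with $\avuptrans{f}{k} h = h + k\mu$ for all $k \in \natz{}$.

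The remaining step is to turn this eigenvector into the required limit. From $\avuptrans{f}{k} h = h + k\mu$ I obtain $\avuptrans{f}{k} h / k = h/k + \mu \to \mu$, a constant vector. Lemma~\ref{lemma: cycle time exists independently of starting point} then guarantees that $\lim_{k \to +\infty} \avuptrans{f}{k} g / k$ exists and equals the same constant $\mu$ for every starting point $g \in \reals{}^n$, in particular for $g = 0$. Recalling from Equation~\eqref{Eq: recursive expression 2} that $\upprev{\mathrm{av},k}(f \vert x) = \tfrac{1}{k+1}[\avuptrans{f}{k+1}(0)](x)$, a harmless index shift yields $\lim_{k \to +\infty} \upprev{\mathrm{av},k}(f \vert x) = \mu$ for every $x \in \statespace{}$; hence $\upprev{\mathrm{av},\infty}(f)$ exists and is independent of $x$. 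Since $f$ was arbitrary, $\uptrans{}$ is weakly ergodic.

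Given that Theorems~\ref{theorem: eigenvector iff bounded}--\ref{theorem: strongly connected then eigenvector} together with the graph-identification corollary already do the heavy lifting, I do not anticipate a serious obstacle. The only point that warrants care is the bookkeeping in the final step: I must check that the $(k+1)$-versus-$k$ index shift and the passage from Lemma~\ref{lemma: cycle time exists independently of starting point}'s single-trajectory statement to the specific trajectory starting at $0$ are handled cleanly, so that the limit of $\upprev{\mathrm{av},k}(f \vert x)$ is genuinely the constant $\mu$ rather than a surviving state-dependent term.
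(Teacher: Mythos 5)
Your proposal is correct and follows essentially the same route as the paper's own proof: Corollary~\ref{corollary: graphs are identical} to transfer strong connectedness to $\mathscr{G}'(\avuptrans{f}{})$, Theorem~\ref{theorem: strongly connected then eigenvector} to obtain an eigenvector, and Lemma~\ref{lemma: cycle time exists independently of starting point} together with Equation~\eqref{Eq: recursive expression 2} to conclude that $\upprev{\mathrm{av},\infty}(f)$ exists independently of the initial state. The only difference is that you spell out the eigenvector-to-limit step and the index bookkeeping explicitly, whereas the paper delegates that reasoning to the discussion at the start of the section.
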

\begin{proof}
Suppose that $\mathscr{G}(\uptrans{})$ is strongly connected.
Then, by Corollary~\ref{corollary: graphs are identical}, $\mathscr{G}'(\avuptrans{f}{})$ is also strongly connected.
Hence, since $\smash{\avuptrans{f}{}}$ is a topical map, Theorem~\ref{theorem: strongly connected then eigenvector} guarantees the existence of an eigenvector of $\smash{\avuptrans{f}{}}$.
As explained in the beginning of this section, this implies by Lemma~\ref{lemma: cycle time exists independently of starting point} that $\smash{\upprev{\mathrm{av},\infty}(f)}$ exists, so we indeed find that $\uptrans{}$ is weakly ergodic.
\qed 
\end{proof}

In the remainder of this paper, we will use the fact that $\uptrans{}$ is coherent---so not just topical---to strengthen this result.
In particular, we will show that the condition of being strongly connected can be replaced by a weaker one: being top class absorbing.
It will moreover turn out that this property is not only sufficient, but also necessary for weak ergodicity.

\section{Necessary and Sufficient Condition for Weak Ergodicity}\label{section: Result}

In order to gain some intuition about how to obtain a more general sufficient condition for weak ergodicity, consider the case where $\uptrans{}$ has a top class $\mathcal{R}$ and the process' initial state $x$ is in $\mathcal{R}$.
Since $\mathcal{R}$ is a maximal communication class, the process surely remains in $\mathcal{R}$ and hence, it is to be expected that the time average of $f$ will not be affected by the dynamics of the process outside $\mathcal{R}$.
Moreover, the communication class $\mathcal{R}$ is a strongly connected component, so one would expect that, due to Proposition~\ref{proposition: strongly connected then eigenvector}, the upper expected time average $\upprev{\mathrm{av},k}(f \vert x)$ converges to a constant that does not depend on the state $x \in \mathcal{R}$.
Our intuition is formalised by the following proposition.
Its proof, as well as those of the other statements in this section, \iftoggle{arxiv}{can be found in the appendix section}{are available in the appendix of \cite{arXivIPMU2020}}.

\begin{proposition}\label{proposition: if top class then eigenvector in top class}
For any maximal communication class $\mathcal{S}$ and any $x\in\mathcal{S}$, the upper expectation $\upprev{\mathrm{av},k}(f \vert x)$ is equal to $\upprev{\mathrm{av},k}(f \indica{\mathcal{S}} \vert x)$ and converges to a limit value. This limit value is furthermore the same for all $x \in \mathcal{S}$.
\end{proposition}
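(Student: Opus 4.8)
The plan is to reduce the statement to the strongly connected case already settled in Proposition~\ref{proposition: strongly connected then eigenvector}, exploiting the fact that a maximal communication class cannot be left. The crucial ingredient is a \emph{locality} property: for $x\in\mathcal{S}$, the value $\uptrans{}h(x)$ depends only on the restriction of $h$ to $\mathcal{S}$. First I would observe that no edge of $\mathscr{G}(\uptrans{})$ leaves $\mathcal{S}$: since $\mathcal{S}$ is maximal, $x\not\to y$ for every $x\in\mathcal{S}$ and $y\in\mathcal{S}^c$, so in particular $\uptrans{}\indica{y}(x)=0$; summing over $y\in\mathcal{S}^c$ and using sub-additivity [\ref{transcoherence: subadditivity}] together with boundedness [\ref{transcoherence: bounds}] then gives $\uptrans{}\indica{\mathcal{S}^c}(x)=0$ for all $x\in\mathcal{S}$. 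Locality follows: if $h$ and $h'$ agree on $\mathcal{S}$, then $h\leq h'+c\,\indica{\mathcal{S}^c}$ with $c\coloneqq\max_{y\in\mathcal{S}^c}\lvert h(y)-h'(y)\rvert\geq 0$, so monotonicity [\ref{transcoherence: monotonicity}], sub-additivity and non-negative homogeneity [\ref{transcoherence: homogeneity}] yield $\uptrans{}h(x)\leq\uptrans{}h'(x)+c\,\uptrans{}\indica{\mathcal{S}^c}(x)=\uptrans{}h'(x)$ for $x\in\mathcal{S}$, and the reverse inequality holds by symmetry.

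Granting locality, the equality $\upprev{\mathrm{av},k}(f\vert x)=\upprev{\mathrm{av},k}(f\indica{\mathcal{S}}\vert x)$ is almost immediate, since $f$ and $f\indica{\mathcal{S}}$ coincide on $\mathcal{S}$. I would argue by induction on $k$ that the iterates $\tilde{m}_{f,k}$ and $\tilde{m}_{f\indica{\mathcal{S}},k}$ from Equation~\eqref{Eq: recursive expression} agree on $\mathcal{S}$: the base case is the agreement of $f$ and $f\indica{\mathcal{S}}$ there, and the inductive step uses that $\tilde{m}_{f,k}(x)=f(x)+\uptrans{}\tilde{m}_{f,k-1}(x)$ for $x\in\mathcal{S}$ depends on $\tilde{m}_{f,k-1}$ only through its restriction to $\mathcal{S}$, which by hypothesis equals that of $\tilde{m}_{f\indica{\mathcal{S}},k-1}$. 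Dividing by $k+1$ gives the claimed equality for every $x\in\mathcal{S}$. The same locality lets me package the dynamics into a genuine restricted operator $\uptrans{}_{\mathcal{S}}\colon\setofgambles{}(\mathcal{S})\to\setofgambles{}(\mathcal{S})$ defined by $\uptrans{}_{\mathcal{S}}g(x)\coloneqq\uptrans{}\hat{g}(x)$ for $x\in\mathcal{S}$, where $\hat{g}$ is any extension of $g$ to $\statespace{}$; this is well defined precisely by locality. One checks that \ref{transcoherence: bounds}--\ref{transcoherence: homogeneity} are inherited, so $\uptrans{}_{\mathcal{S}}$ is again a coherent upper transition operator, and that its accessibility graph is the subgraph of $\mathscr{G}(\uptrans{})$ induced on $\mathcal{S}$, which is strongly connected because $\mathcal{S}$ is a communication class. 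An induction identical to the one above then shows that, for $x\in\mathcal{S}$, the iterate $\tilde{m}_{f,k}(x)$ equals the corresponding iterate for $\uptrans{}_{\mathcal{S}}$ driven by $f|_{\mathcal{S}}$.

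Finally, I would apply Proposition~\ref{proposition: strongly connected then eigenvector} to the strongly connected operator $\uptrans{}_{\mathcal{S}}$ to conclude that it is weakly ergodic, so that the restricted upper expected time averages of $f|_{\mathcal{S}}$ converge to a single limit value independent of $x\in\mathcal{S}$; transported back along the identification of the previous paragraph, this is exactly the asserted convergence and state-independence of $\upprev{\mathrm{av},k}(f\vert x)$ on $\mathcal{S}$. I expect the main obstacle to be the locality property and its clean packaging into $\uptrans{}_{\mathcal{S}}$: once the maximality of $\mathcal{S}$ is shown to decouple the dynamics on $\mathcal{S}$ from the rest of the state space, the remaining work is the bookkeeping of the two inductions and a direct appeal to the already-established strongly connected case, both of which are routine.
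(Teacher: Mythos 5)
Your proposal is correct and follows essentially the same route as the paper: the locality property you derive from $\uptrans{}\indica{\mathcal{S}^c}(x)=0$ for $x\in\mathcal{S}$ is exactly the paper's Lemmas~\ref{lemma: not from S to Sc} and~\ref{lemma: time average in maximal class only depends on value of f in maximal class}, your induction giving $\upprev{\mathrm{av},k}(f\vert x)=\upprev{\mathrm{av},k}(f\indica{\mathcal{S}}\vert x)$ is the paper's Lemma~\ref{lemma: average does not depend on f outside S}, and your restriction-to-$\mathcal{S}$ construction mirrors the paper's operator $\avuptrans{f,\mathcal{S}}{}$ and Lemmas~\ref{lemma: G^k is equal to G_R^k}--\ref{lemma: if top class then eigenvector in top class}. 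The only (harmless) cosmetic differences are that you restrict $\uptrans{}$ itself to a coherent operator $\uptrans{}_{\mathcal{S}}$ and invoke Proposition~\ref{proposition: strongly connected then eigenvector} as a black box, where the paper restricts the topical map $\avuptrans{f}{}$ and reruns Theorem~\ref{theorem: strongly connected then eigenvector} together with Lemma~\ref{lemma: cycle time exists independently of starting point}, and that your symmetric two-sided locality argument avoids the mixed sub-additivity property the paper uses.
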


As a next step, we want to extend the domain of convergence of $\upprev{\mathrm{av},k}(f \vert x)$ to all states $x \in \statespace{}$.
To do so, we will impose the additional property of being top class absorbing (TCA), which, as explained in Section~\ref{Sect: accessibility and topical maps}, demands that there is a strictly positive (lower) probability to reach the top class $\mathcal{R}$ in a finite time period.
Once in $\mathcal{R}$, the process can never escape $\mathcal{R}$ though. One would therefore expect that as time progresses---as more of these finite time periods go by---this lower probability increases, implying that the process will eventually be in $\mathcal{R}$ with practical certainty.
Furthermore, if the process transitions from $x \in \mathcal{R}^c$ to a state $y \in \mathcal{R}$, then Proposition~\ref{proposition: if top class then eigenvector in top class} guarantees that $\upprev{\mathrm{av},k}(f \vert y)$ converges to a limit and that this limit value does not depend on the state $y$.
Finally, since the average is taken over a growing time interval, the initial finite number of time steps that it took for the process to transition from $x$ to $y$ will not influence the time average of $f$ in the limit. This leads us to suspect that $\upprev{\mathrm{av},k}(f \vert x)$ converges to the same limit as $\upprev{\mathrm{av},k}(f \vert y)$. Since this argument applies to any $x\in\mathcal{R}^c$, we are led to believe that $\uptrans{}$ is weakly ergodic. The following result confirms this.

\begin{proposition}\label{proposition: if top class absorbing then eigenvector}
Any $\uptrans{}$ that satisfies (TCA) is weakly ergodic. 
\end{proposition}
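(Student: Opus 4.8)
The plan is to deduce weak ergodicity from the existence of an eigenvector of the topical map $\avuptrans{f}{}$ on the whole space $\reals{}^n\cong\setofgambles{}(\statespace{})$. Indeed, if $\avuptrans{f}{}$ has an eigenvector $h$, then $\avuptrans{f}{k}h/k\to\mu$, so Lemma~\ref{lemma: cycle time exists independently of starting point} shows that $\lim_{k\to+\infty}\avuptrans{f}{k}(0)/k$ exists and is a constant vector; by Equation~\eqref{Eq: recursive expression 2} this limit is exactly $\upprev{\mathrm{av},\infty}(f)$, which is then independent of $x$, and since $f$ is arbitrary this is weak ergodicity. By Theorem~\ref{theorem: eigenvector iff bounded} it therefore suffices to show that the Hilbert-seminorm trajectory $\{\hnorm{\tilde{m}_{f,k}}\colon k\in\nats{}\}$ is bounded, where $\tilde{m}_{f,k}=\avuptrans{f}{k}(0)$. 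Fix $f\in\setofgambles{}(\statespace{})$ and let $\mathcal{R}$ be the top class supplied by (TCA).

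First I would establish a geometric absorption estimate. Since $\mathcal{R}$ is the unique maximal top class, there is no edge from $\mathcal{R}$ to $\mathcal{R}^c$, i.e.\ $\uptrans{}\indica{y}(x)=0$ whenever $x\in\mathcal{R}$ and $y\in\mathcal{R}^c$; sub-additivity and boundedness then give $\uptrans{}\indica{\mathcal{R}^c}\leq\indica{\mathcal{R}^c}$, and monotonicity makes $(\uptrans{}^k\indica{\mathcal{R}^c})_{k}$ pointwise non-increasing. Condition E2 together with the finiteness of $\statespace{}$ yields a uniform horizon $N\in\nats{}$ and a constant $\rho<1$ with $\uptrans{}^N\indica{\mathcal{R}^c}\leq\rho\,\indica{\mathcal{R}^c}$; repeatedly applying non-negative homogeneity and monotonicity gives $\uptrans{}^{jN}\indica{\mathcal{R}^c}\leq\rho^{j}\indica{\mathcal{R}^c}$, so $\uptrans{}^k\indica{\mathcal{R}^c}\to0$ geometrically. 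Using the sub-additive bound $\tilde{m}_{g,k}\leq\sum_{i=0}^{k}\uptrans{}^i g$, the total upper occupation of $\mathcal{R}^c$ is then uniformly bounded: with $C\coloneqq\sum_{i=0}^{\infty}\max\uptrans{}^i\indica{\mathcal{R}^c}<+\infty$ we have $\tilde{m}_{\indica{\mathcal{R}^c},k}\leq C$ for all $k$.

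Next I would reduce to $f$ vanishing on $\mathcal{R}^c$. A short induction from sub-additivity, non-negative homogeneity and monotonicity shows that $f\mapsto\tilde{m}_{f,k}$ is monotone, non-negatively homogeneous and sub-additive in its function argument; combined with $-\supnorm{f}\indica{\mathcal{R}^c}\leq f-f\indica{\mathcal{R}}\leq\supnorm{f}\indica{\mathcal{R}^c}$ and the occupation bound, this gives $\supnorm{\tilde{m}_{f,k}-\tilde{m}_{f\indica{\mathcal{R}},k}}\leq\supnorm{f}\,C$ uniformly in $k$. A uniformly bounded perturbation affects neither the boundedness of $\hnorm{\tilde{m}_{f,k}}$ nor the value of $\lim_{k}\tilde{m}_{f,k}/k$, so I may assume $f=f\indica{\mathcal{R}}$. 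For such $f$, transitions out of $\mathcal{R}$ being forbidden, the restriction of $\avuptrans{f}{}$ to $\mathcal{R}$ is an autonomous topical map whose graph is strongly connected; Theorem~\ref{theorem: strongly connected then eigenvector} then yields an eigenvector on $\mathcal{R}$, so that $\hnorm{\tilde{m}_{f,k}\vert_{\mathcal{R}}}$ is bounded by Theorem~\ref{theorem: eigenvector iff bounded}, while Proposition~\ref{proposition: if top class then eigenvector in top class} gives $\tfrac{1}{k+1}\tilde{m}_{f,k}(x)\to\mu$ for every $x\in\mathcal{R}$.

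The remaining and main difficulty is to transfer this behaviour from $\mathcal{R}$ to $\mathcal{R}^c$: I must show that $\tilde{m}_{f,k}(x)$ stays within a $k$-independent distance of the $\mathcal{R}$-values for every $x\in\mathcal{R}^c$, which bounds $\hnorm{\tilde{m}_{f,k}}$ on all of $\statespace{}$ and simultaneously forces $\tfrac{1}{k+1}\tilde{m}_{f,k}(x)\to\mu$ there as well. The mechanism I would use is a first-entrance decomposition of $\tilde{m}_{f,k}(x)$, viewed as an upper expectation of $\sum_{i=0}^{k}f(X_i)$ started in $x$, split at the first time $\tau$ the process reaches $\mathcal{R}$: the contribution before $\tau$ is dominated by $\supnorm{f}$ times the upper occupation time of $\mathcal{R}^c$, hence $O(1)$ by the first step, while the contribution from $\tau$ onwards is, by the closed and strongly connected dynamics on $\mathcal{R}$, within $O(1)$ of $(k-\tau+1)\mu$. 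This is exactly where coherence---and not merely topicality---enters, because the clean additive splitting across the random time $\tau$ available in the precise case must here be replaced by the sub- and super-additivity of $\uptrans{}$ and a law of iterated upper expectation under epistemic irrelevance; making this rigorous, with the geometric absorption estimate controlling the error terms uniformly in $k$, is the heart of the argument.
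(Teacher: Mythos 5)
Your reduction and your first three steps are sound: bounding $\{\hnorm{\tilde{m}_{f,k}}\colon k\in\nats{}\}$ would indeed yield an eigenvector of $\avuptrans{f}{}$ (Theorem~\ref{theorem: eigenvector iff bounded}) and hence weak ergodicity via Lemma~\ref{lemma: cycle time exists independently of starting point}; your geometric absorption estimate and the occupation bound $C$ are correct (they mirror Lemmas~\ref{lemma: Rc decreases} and~\ref{lemma: limit of Rc converges to zero} in the paper's appendix); the reduction to $f=f\indica{\mathcal{R}}$ via monotonicity, sub-additivity and non-negative homogeneity of $f\mapsto\tilde{m}_{f,k}$ is valid; and the treatment of the top class itself is essentially Proposition~\ref{proposition: if top class then eigenvector in top class}. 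The problem is that the step you yourself flag as ``the heart of the argument''---showing that $\tilde{m}_{f,k}(x)$ for $x\in\mathcal{R}^c$ stays within a $k$-independent distance of the values on $\mathcal{R}$---is precisely the step you do not prove, and it is not a routine completion. The first-entrance decomposition you propose needs (i) the interpretation of $\tilde{m}_{f,k}$ as an upper envelope over the processes in $\eimarkov{\settrans{}}$, (ii) a law of iterated upper expectation across the random entrance time $\tau$, i.e.\ that conditioning a compatible process on $X_{0:\tau}$ again yields a compatible process started at $X_\tau$, and (iii) a gluing/selection argument to get the matching lower bound on the supremum. None of this is available in the operator-level framework (properties \ref{transcoherence: bounds}--\ref{transcoherence: mixed additivity} plus topicality) to which the paper deliberately restricts itself, so your proposal is incomplete exactly where the real difficulty lies.

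It is also worth noting that your target is strictly stronger than what weak ergodicity requires: you aim to bound the unnormalized trajectory in the Hilbert semi-norm (equivalently, to produce an eigenvector of $\avuptrans{f}{}$ on all of $\setofgambles{}(\statespace{})$), whereas weak ergodicity only requires convergence of the normalized averages $\overline{m}_{f,k}=\tfrac{1}{k+1}\tilde{m}_{f,k}$. The paper's proof exploits this slack and thereby avoids both the stopping time and the global eigenvector: for large $\ell$, absorption gives $\uptrans{}^\ell\overline{m}_{f,k}\approx\uptrans{}^\ell(\overline{m}_{f,k}\indica{\mathcal{R}})$ uniformly in $k$ (Lemma~\ref{lemma: absorbing implies that T^k h only depends on h in R}); for large $k$, the $\ell$ extra additions of $f$ are $O(\ell/k)$ after normalization, so $\uptrans{}^\ell\overline{m}_{f,k}\approx\overline{m}_{f,k+\ell}$ (Lemma~\ref{lemma: average is equal to T^k average}); and on $\mathcal{R}$ one has $\overline{m}_{f,k}\indica{\mathcal{R}}\approx\mu\indica{\mathcal{R}}$ by Proposition~\ref{proposition: if top class then eigenvector in top class}. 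Chaining these with sup-norm non-expansiveness and $\uptrans{}^\ell(\mu\indica{\mathcal{R}})\approx\uptrans{}^\ell\mu=\mu$ gives $\supnorm{\overline{m}_{f,k}-\mu}\to0$, i.e.\ weak ergodicity, entirely within the coherence axioms. To salvage your route you would either have to develop the process-level conditioning machinery for stopping times, or replace the first-entrance split by an operator-level intertwining argument of this kind.
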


Conversely, suppose that $\uptrans{}$ does not satisfy (TCA).
Then there are two possibilities: 
either there is no top class or there is a top class but it is not absorbing.
If there is no top class, then it can be easily deduced that there are at least two maximal communication classes $\mathcal{S}_1$ and $\mathcal{S}_2$.
As discusssed earlier, the process cannot escape the classes $\mathcal{S}_1$ and $\mathcal{S}_2$ once it has reached them.
So if it starts in one of these communication classes, the process' dynamics outside this class are irrelevant for the behaviour of the resulting time average.
In particular, if we let $f$ be the function that takes the constant value $c_1$ in $\mathcal{S}_1$ and $c_2$ in $\mathcal{S}_2$, with $c_1 \not= c_2$, then we would expect that $\upprev{\mathrm{av},k}(f \vert x) = c_1$ and $\upprev{\mathrm{av},k}(f \vert y) = c_2$ for all $k \in \natz{}$, any $x \in \mathcal{S}_1$ and any $y \in \mathcal{S}_2$.
In fact, this can easily be formalised by means of Proposition~\ref{proposition: if top class then eigenvector in top class}.
Hence, $\upprev{\mathrm{av},\infty}(f \vert x)=c_1\neq c_2=\upprev{\mathrm{av},\infty}(f \vert y)$, so the upper transition operator $\uptrans{}$ cannot be weakly ergodic.

\begin{proposition}\label{prop: top class if weak ergodicity}
Any weakly ergodic $\uptrans{}$ has a top class.
\end{proposition}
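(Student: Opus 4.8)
The plan is to prove the contrapositive: if $\uptrans{}$ has no top class, then $\uptrans{}$ is not weakly ergodic. The informal discussion preceding the statement already isolates the two ingredients I would make precise, namely (i) a purely graph-theoretic claim that the absence of a top class forces at least two distinct maximal communication classes, and (ii) the construction of a single function $f \in \setofgambles{}(\statespace{})$ whose upper expected time average stabilises at two different values depending on the initial communication class.

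First I would establish the graph-theoretic claim. The communication classes partition the finite set $\statespace{}$, and the accessibility relation $\to$ descends to a partial order on these classes (antisymmetry holds because two mutually accessible classes coincide). Since the poset is finite and nonempty it has at least one maximal element, i.e. a communication class $\mathcal{S}$ with $x \not\to y$ for all $x \in \mathcal{S}$ and all $y \in \mathcal{S}^c$. I would then argue by contradiction that there cannot be exactly one such maximal class: if $\mathcal{S}$ were the unique maximal class, then following $\to$ upward from an arbitrary state $x$ through the finite poset reaches a maximal class, which must be $\mathcal{S}$; hence $x \to z$ for every $z \in \mathcal{S}$ and every $x \in \statespace{}$, so $\mathcal{S}$ would be a top class, contradicting the hypothesis. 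Therefore there exist at least two maximal communication classes $\mathcal{S}_1$ and $\mathcal{S}_2$.

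Next I would fix reals $c_1 \neq c_2$ and choose $f \in \setofgambles{}(\statespace{})$ with $f(x) = c_1$ for all $x \in \mathcal{S}_1$ and $f(y) = c_2$ for all $y \in \mathcal{S}_2$ (the values elsewhere being irrelevant). The crucial step invokes Proposition~\ref{proposition: if top class then eigenvector in top class}, applied to each maximal class separately. For $x \in \mathcal{S}_1$ it gives $\upprev{\mathrm{av},k}(f \vert x) = \upprev{\mathrm{av},k}(f\indica{\mathcal{S}_1} \vert x)$, and since $f\indica{\mathcal{S}_1} = c_1\indica{\mathcal{S}_1}$, applying the same replacement to the global constant function $c_1$ yields $\upprev{\mathrm{av},k}(c_1 \vert x) = \upprev{\mathrm{av},k}(c_1\indica{\mathcal{S}_1} \vert x)$. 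A one-line induction on the recursion $\tilde m_{c_1,k} = c_1 + \uptrans{}\tilde m_{c_1,k-1}$, using constant additivity [\ref{transcoherence: constant addivity}] and boundedness [\ref{transcoherence: bounds}], gives $\tilde m_{c_1,k} = (k+1)c_1$ and hence $\upprev{\mathrm{av},k}(c_1 \vert x) = c_1$ for every $k$. Chaining these equalities, $\upprev{\mathrm{av},k}(f \vert x) = c_1$ for all $k$ and all $x \in \mathcal{S}_1$, so $\upprev{\mathrm{av},\infty}(f \vert x) = c_1$; by symmetry $\upprev{\mathrm{av},\infty}(f \vert y) = c_2$ for all $y \in \mathcal{S}_2$. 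Since $c_1 \neq c_2$, the limit depends on the initial state, so $\uptrans{}$ is not weakly ergodic.

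I expect the main obstacle to be the graph-theoretic step: making rigorous that the absence of a top class forces two undominated communication classes, and in particular justifying that a \emph{unique} undominated class is necessarily reachable from every state and hence a top class. The remaining analytic content is light, as Proposition~\ref{proposition: if top class then eigenvector in top class} carries the weight and the evaluation of the constant's time average reduces to \ref{transcoherence: constant addivity} and \ref{transcoherence: bounds}.
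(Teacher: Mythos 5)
Your proposal is correct and follows essentially the same route as the paper: the contrapositive via a partial order on communication classes (absence of a top class forcing two undominated classes $\mathcal{S}_1,\mathcal{S}_2$), followed by the function $f$ taking distinct constants $c_1\neq c_2$ on them, whose upper expected time averages are pinned at $c_1$ and $c_2$ respectively. The only cosmetic differences are that you obtain the localisation $\upprev{\mathrm{av},k}(f \vert x)=\upprev{\mathrm{av},k}(c_1 \vert x)$ by chaining Proposition~\ref{proposition: if top class then eigenvector in top class} twice, where the paper invokes an appendix lemma (its Lemma~\ref{lemma: average does not depend on f outside S}) stating this comparison directly, and that you evaluate the constant's average by induction where the paper cites its bounds lemma (Lemma~\ref{lemma: bounds average}); both substitutions are valid.
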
 

Finally, suppose that there is a top class $\mathcal{R}$, but that it is not absorbing.
This implies that there is an $x \in \mathcal{R}^c$ and a compatible precise model such that the process is guaranteed to remain in $\mathcal{R}^c$ given that it started in $x$.
If we now let $f = \indica{\mathcal{R}^c}$, then conditional on the fact that $X_0 = x$, the expected time average of $f$ corresponding to this precise model is equal to $1$. Furthermore, since $f\leq1$, no other process can yield a higher expected time average. The upper expected time average $\upprev{\mathrm{av},k}(f \vert x)$ is therefore equal to $1$ for all $k \in \natz{}$.
However, using Proposition~\ref{proposition: if top class then eigenvector in top class}, we can also show that $\upprev{\mathrm{av},k}(f \vert y) = 0$ for any $y \in \mathcal{R}$ and all $k \in \natz{}$.
Hence, $\upprev{\mathrm{av},\infty}(f \vert x)=1\neq 0=\upprev{\mathrm{av},\infty}(f \vert y)$, which precludes $\uptrans{}$ from being weakly ergodic.

\begin{proposition}\label{prop: TCA if weak ergodicity}
Any weakly ergodic $\uptrans{}$ that has a top class satisfies (TCA).
\end{proposition}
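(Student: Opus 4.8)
The plan is to establish the contrapositive: assuming that $\uptrans{}$ has a top class $\mathcal{R}$ but fails to be (TCA), I will exhibit a single function whose upper expected time average tends to different limits depending on the initial state, which rules out weak ergodicity. Since $\mathcal{R}$ is a top class, the negation of condition~E2 supplies a state $x \in \mathcal{R}^c$ with $\uptrans{}^k \indica{\mathcal{R}^c}(x) = 1$ for all $k \in \nats{}$ (note that $\uptrans{}^k \indica{\mathcal{R}^c}(x) \leq \max \indica{\mathcal{R}^c} = 1$ by \ref{transcoherence: bounds}, so the failure of E2 at $x$ forces equality for every $k$). The natural candidate is $f \coloneqq \indica{\mathcal{R}^c}$, and the goal splits into showing $\upprev{\mathrm{av},k}(f \vert x) = 1$ for all $k$, while $\upprev{\mathrm{av},k}(f \vert y) = 0$ for all $k$ and every $y \in \mathcal{R}$.

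For the first claim I would exploit that the top class $\mathcal{R}$ is a maximal (undominated) communication class, so that no edge leaves $\mathcal{R}$; hence for $z \in \mathcal{R}$ we have $\uptrans{}\indica{\mathcal{R}^c}(z) \leq \sum_{w \in \mathcal{R}^c} \uptrans{}\indica{w}(z) = 0$ by \ref{transcoherence: subadditivity}, which means that under every compatible precise model the one-step probability of leaving $\mathcal{R}$ vanishes, i.e. $\mathcal{R}$ is absorbing for all models, including the non-homogeneous and non-Markovian ones. Consequently, along any trajectory started in $x \in \mathcal{R}^c$, being in $\mathcal{R}^c$ at time $k$ forces being in $\mathcal{R}^c$ at all earlier instants, so that $\overline{f}_k(X_{0:k}) \geq \indica{\mathcal{R}^c}(X_k)$ almost surely under each model. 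Monotonicity of the upper expectation then gives $\upprev{\mathrm{av},k}(f \vert x) \geq \upprev{k}(f \vert x) = \uptrans{}^k \indica{\mathcal{R}^c}(x) = 1$, and since $\overline{f}_k \leq \max f = 1$ we also have $\upprev{\mathrm{av},k}(f \vert x) \leq 1$, so that $\upprev{\mathrm{av},k}(f \vert x) = 1$ for all $k$.

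For the states in $\mathcal{R}$ I would simply invoke Proposition~\ref{proposition: if top class then eigenvector in top class} with $\mathcal{S} = \mathcal{R}$: for $y \in \mathcal{R}$ it yields $\upprev{\mathrm{av},k}(f \vert y) = \upprev{\mathrm{av},k}(f \indica{\mathcal{R}} \vert y)$, and because $f \indica{\mathcal{R}} = \indica{\mathcal{R}^c}\indica{\mathcal{R}} = 0$ and the recursion~\eqref{Eq: recursive expression} fixes the zero function, this equals $0$ for every $k$. Combining the two computations, $\lim_{k} \upprev{\mathrm{av},k}(f \vert x) = 1 \neq 0 = \lim_{k} \upprev{\mathrm{av},k}(f \vert y)$, so the limit depends on the initial state and $\uptrans{}$ cannot be weakly ergodic, which is precisely the contrapositive of the claim.

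The main obstacle is the lower bound $\upprev{\mathrm{av},k}(f \vert x) \geq 1$. A direct operator estimate is unavailable, because the sub-additivity \ref{transcoherence: subadditivity} of $\uptrans{}$ runs the wrong way when one expands the recursion for $\tilde{m}_{f,k}$, so purely operator-theoretic manipulation produces only upper bounds on the time average. The resolution is to argue through the constituting precise models via the pointwise domination $\overline{f}_k \geq \indica{\mathcal{R}^c}(X_k)$, and the genuinely delicate point there is that the absorbing behaviour of $\mathcal{R}$ must hold for \emph{every} compatible model, not only the homogeneous Markov ones; this is exactly where the undominated character of the top class, rather than mere topicality, is needed.
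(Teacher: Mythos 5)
Your proof is correct in substance, but it establishes the crucial lower bound by a genuinely different route than the paper. You keep $f = \indica{\mathcal{R}^c}$ and dualize to the compatible precise models: since the top class is undominated, every compatible transition probability out of a state of $\mathcal{R}$ assigns zero mass to $\mathcal{R}^c$, so $\overline{f}_k(X_{0:k}) \geq \indica{\mathcal{R}^c}(X_k)$ holds almost surely under every compatible model, and taking suprema gives $\upprev{\mathrm{av},k}(f \vert x) \geq \uptrans{}^k \indica{\mathcal{R}^c}(x) = 1$. The paper never leaves the operator framework: its Lemma~\ref{lemma: not TCA then TI_A equals one} distills the failure of (TCA) into a non-empty set $A \subseteq \mathcal{R}^c$ with $\indica{A} \leq \uptrans{}\indica{A}$ (precisely the set of states at which $\uptrans{}^k \indica{\mathcal{R}^c}$ equals $1$ for every $k$; your witness $x$ lies in it), after which the lower bound $\tilde{m}_{\indica{A},k} \geq (k+1)\indica{A}$ follows by a one-line induction from monotonicity [\ref{transcoherence: monotonicity}] and non-negative homogeneity [\ref{transcoherence: homogeneity}]; the treatment of states $y \in \mathcal{R}$ is the same in both arguments (Proposition~\ref{proposition: if top class then eigenvector in top class}, respectively its ingredient Lemma~\ref{lemma: average does not depend on f outside S}). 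Your diagnosis that a direct operator estimate is unavailable is therefore not quite right: it is unavailable for $\indica{\mathcal{R}^c}$ itself, because states of $\mathcal{R}^c$ from which $\mathcal{R}$ is reachable with positive lower probability break the induction, and the paper's remedy is to shrink to the invariant subset $A$ rather than to pass to the precise models. Each route buys something: yours is probabilistically transparent and essentially formalises the informal sketch preceding the proposition in Section~\ref{section: Result}; the paper's respects its announced methodology that proofs use only the coherence axioms \ref{transcoherence: bounds}--\ref{transcoherence: mixed additivity} and never the representation of $\uptrans{}$ in terms of transition matrices. That last point is also the one caveat on your argument: weak ergodicity is defined here through the operator recursion for an \emph{arbitrary} coherent $\uptrans{}$, so your appeal to ``every compatible precise model'' silently requires that every coherent upper transition operator is the upper envelope of a separately specified set $\settrans{}$ of transition matrices, and that the behavioural and operator-theoretic definitions of $\upprev{\mathrm{av},k}$ and $\upprev{k}$ then coincide; both facts are standard (the lower-envelope theorem plus the results cited in Sections~\ref{section: precise Markov chains} and~\ref{section: trans operators and ergodicity}), but they must be invoked explicitly for your proof to cover the stated generality.
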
 
Together with Propositions~\ref{proposition: if top class absorbing then eigenvector} and~\ref{prop: top class if weak ergodicity}, this allows us to conclude that (TCA) is a necessary and sufficient condition for weak ergodicity. 



\begin{theorem}\label{theorem: weakly ergodic iff top class absorbing}
$\uptrans{}$ is weakly ergodic if and only if it is top class absorbing.
\end{theorem}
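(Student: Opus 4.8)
The plan is to prove the two implications separately by assembling the three preceding propositions; there is essentially no fresh argument to construct, since all the substantive work has already been carried out and the theorem functions as a packaging corollary.

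For the \emph{sufficiency} direction, I would simply observe that if $\uptrans{}$ is top class absorbing then, by definition, it has a top class satisfying the absorbing condition (E2). Proposition~\ref{proposition: if top class absorbing then eigenvector} then applies verbatim and yields that $\uptrans{}$ is weakly ergodic. Nothing more is needed here.

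For the \emph{necessity} direction, I would chain the two remaining propositions. Assume $\uptrans{}$ is weakly ergodic. First, Proposition~\ref{prop: top class if weak ergodicity} guarantees that $\uptrans{}$ has a top class. Now that we know $\uptrans{}$ is both weakly ergodic \emph{and} in possession of a top class, Proposition~\ref{prop: TCA if weak ergodicity} applies and guarantees that this top class satisfies the absorbing condition (E2); that is, $\uptrans{}$ is top class absorbing. The key point is that Proposition~\ref{prop: TCA if weak ergodicity} is conditional on the existence of a top class, so it can only be invoked \emph{after} Proposition~\ref{prop: top class if weak ergodicity} has supplied that hypothesis — the two must be applied in this order.

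Combining both implications gives the desired equivalence. I do not expect any genuine obstacle at this level: the entire difficulty resides in the three propositions themselves (whose proofs are deferred to the appendix), and the theorem only records that their hypotheses and conclusions interlock to form a single necessary-and-sufficient characterisation.
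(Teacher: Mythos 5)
Your proposal is correct and matches the paper's own proof exactly: sufficiency is Proposition~\ref{proposition: if top class absorbing then eigenvector}, and necessity follows by first invoking Proposition~\ref{prop: top class if weak ergodicity} to obtain a top class and then Proposition~\ref{prop: TCA if weak ergodicity} to conclude it is absorbing. Your remark about the order of application is the one (minor) subtlety, and you handled it correctly.
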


\section{Conclusion}

The most important conclusion of our study of upper and lower expected time averages is its final result: that being top class absorbing is necessary and sufficient for weak ergodicity; a property that guarantees upper and lower expected time averages to converge to a limit value that does not depend on the process' initial state.
In comparison with standard ergodicity, which guarantees the existence of a limit upper and lower expectation, weak ergodicity thus requires less stringent conditions to be satisfied.
We illustrated this difference in Example~\ref{example 1}, where we considered a(n imprecise) Markov chain that satisfies (TCA) but not (TCR).

Apart from the fact that their existence is guaranteed under weaker conditions, the inferences $\upprev{\mathrm{av},\infty}(f)$ are also able to provide us with more information about how time averages might behave, compared to limit expectations.
To see why, recall Example~\ref{example 2}, where the inference $\upprev{\mathrm{av},\infty}(\indica{b}) = 1/2$ significantly differed from $\upprev{\infty}(\indica{b}) = 1$.
Clearly, the former was more representative for the limit behaviour of the time average of $\indica{b}$. 
As a consequence of \cite[Lemma~57]{8535240}, a similar statement holds for general functions.
In particular, it implies that $\upprev{\mathrm{av},\infty}(f) \leq \upprev{\infty}(f)$ for any function $f \in \setofgambles{}(\statespace{})$.
Since both inferences are upper bounds, $\upprev{\mathrm{av},\infty}(f)$ is therefore at least as informative as $\upprev{\infty}(f)$.

In summary then, when it comes to characterising long-term time averages, there are two advantages that (limits of) upper and lower expected time averages have over conventional limit upper and lower expectations: they exist under weaker conditions and they are at least as (and sometimes much more) informative.

That said, there is also one important feature that limit upper and lower expectations have, but that is currently still lacking for upper and lower expected time averages: an (imprecise) point-wise ergodic theorem~\cite[Theorem~32]{DECOOMAN201618}. For the limit upper and lower expectations of an ergodic imprecise Markov chain, this result states that
\begin{align*}
\smash{\lowprev{\infty}(f)} 
\leq \liminf_{k \to +\infty} \overline{f}_k (X_{0:k})
\leq \limsup_{k \to +\infty} \overline{f}_k (X_{0:k})
\leq \smash{\upprev{\infty}(f)},
\end{align*} 
with lower probability one. In order for limit upper and lower expected time averages to be the undisputed quantities of interest when studying long-term time averages, a similar result would need to be obtained for weak ergodicity, where the role of $\upprev{\infty}(f)$ and $\lowprev{\infty}(f) \coloneqq - \upprev{\infty}(- f)$ is taken over by $\upprev{\mathrm{av},\infty}(f)$ and $\smash{\lowprev{\mathrm{av},\infty}(f)} \coloneqq \smash{- \upprev{\mathrm{av},\infty}(- f)}$, respectively.
If such a result would hold, it would provide us with (strictly almost sure) bounds on the limit values attained by time averages that are not only more informative as the current ones, but also guaranteed to exist under weaker conditions.
Whether such a result indeed holds is an open problem that we would like to address in our future work.

A second line of future research that we would like to pursue consists in studying the convergence of $\upprev{\mathrm{av},k}(f \vert x)$ in general, without imposing that the limit value should not depend on $x$. We suspect that this kind of convergence will require no conditions at all.

\bibliographystyle{splncs04}
\bibliography{IPMU2020_TimeAverages}

\iftoggle{arxiv}{

\appendix

\section{Proof of Proposition~\ref{proposition: if top class then eigenvector in top class}}

In the following, we will often use the fact that, since $\uptrans{}$ is coherent, the iterates of $\uptrans{}$ will also be coherent.
This can easily be derived using the coherence properties \ref{transcoherence: bounds}-\ref{transcoherence: mixed additivity} and an induction argument in $k$.
For an illustration of how to do so, we refer to \cite[Lemma~23]{extended8627473}.

\begin{lemma}\label{lemma: T^k is coherent}
If $\uptrans{}$ is a coherent upper transition operator then, for any $k \in \nats{}$, $\uptrans{}^k$ is coherent as well. 
\end{lemma}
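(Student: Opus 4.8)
The plan is to proceed by induction on $k$. The base case $k = 1$ is exactly the hypothesis that $\uptrans{}$ is coherent. For the inductive step, I would assume that $\uptrans{}^k$ is coherent and then verify the defining properties \ref{transcoherence: bounds}--\ref{transcoherence: homogeneity} for the composition $\uptrans{}^{k+1} = \uptrans{} \circ \uptrans{}^k$; the remaining properties \ref{transcoherence: constant addivity}--\ref{transcoherence: mixed additivity} then follow automatically, since the excerpt already records that they are implied by \ref{transcoherence: bounds}--\ref{transcoherence: homogeneity}.

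The recurring mechanism throughout the inductive step is to push an inequality that holds for the inner map $\uptrans{}^k$ through the outer map $\uptrans{}$ using monotonicity \ref{transcoherence: monotonicity}, and then to invoke the corresponding property of $\uptrans{}$ itself. For sub-additivity \ref{transcoherence: subadditivity}, I would begin from the induction hypothesis $\uptrans{}^k(h+g) \leq \uptrans{}^k h + \uptrans{}^k g$, apply $\uptrans{}$ monotonically, and then use sub-additivity of $\uptrans{}$ to bound $\uptrans{}(\uptrans{}^k h + \uptrans{}^k g)$ above by $\uptrans{}^{k+1} h + \uptrans{}^{k+1} g$. For non-negative homogeneity \ref{transcoherence: homogeneity}, the direct chain $\uptrans{}^{k+1}(\lambda h) = \uptrans{}(\lambda\,\uptrans{}^k h) = \lambda\,\uptrans{}^{k+1} h$ suffices, using homogeneity of each factor in turn. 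For boundedness \ref{transcoherence: bounds}, I would apply \ref{transcoherence: bounds} for $\uptrans{}$ to the function $\uptrans{}^k h$, giving $\min(\uptrans{}^k h) \leq \uptrans{}^{k+1} h \leq \max(\uptrans{}^k h)$, and then widen these bounds to $\min h$ and $\max h$ via the induction hypothesis $\min h \leq \uptrans{}^k h \leq \max h$.

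I do not expect a genuine obstacle: once one observes that monotonicity is the bridge that lets the composition inherit each coherence property from its two factors, the argument is entirely mechanical. The only mild point worth flagging is that the boundedness step also uses that $\uptrans{}$ leaves constants fixed — a consequence of \ref{transcoherence: bounds} (equivalently of \ref{transcoherence: constant addivity}) — which is what collapses the endpoints correctly. The same induction pattern underlies \cite[Lemma~23]{extended8627473}, to which the excerpt already refers for an illustration.
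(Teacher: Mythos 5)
Your proposal is correct and takes essentially the same route as the paper, whose own proof is just the sketch you reconstructed: induction on $k$, pushing each of \ref{transcoherence: bounds}--\ref{transcoherence: homogeneity} through the composition $\uptrans{}\circ\uptrans{}^k$ via monotonicity [\ref{transcoherence: monotonicity}], with the derived properties then holding automatically. One minor remark: your boundedness step does not in fact need that $\uptrans{}$ fixes constants, since the chain $\min h \leq \min\big(\uptrans{}^k h\big) \leq \uptrans{}^{k+1}h \leq \max\big(\uptrans{}^k h\big) \leq \max h$ already follows from \ref{transcoherence: bounds} applied to $\uptrans{}$ together with the induction hypothesis.
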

The properties \ref{transcoherence: bounds}-\ref{transcoherence: mixed additivity} of coherent upper transition operators therefore also apply to $\uptrans{}^k$:
\begin{enumerate}[leftmargin=*,ref={\upshape{}C\arabic*$^\prime$},label={\upshape{}C\arabic*$^\prime$}.,itemsep=3pt, series=iteratedcoherence]
\item\label{iteratedcoherence: bounds} $\min h  \leq \uptrans{}^k h \leq \max h$ \hfill [boundedness];
\item\label{iteratedcoherence: subadditivity} $\uptrans{}^k (h+g) \leq \uptrans{}^k h + \uptrans{}^k g$ \hfill [sub-additivity];
\item\label{iteratedcoherence: homogeneity} $\uptrans{}^k (\lambda h) = \lambda \uptrans{}^k h$ \hfill [non-negative homogeneity];
\item\label{iteratedcoherence: constant addivity} $\uptrans{}^k (\mu + h) = \mu + \uptrans{}^k h$ \hfill [constant additivity];
\item\label{iteratedcoherence: monotonicity} if $h \leq g$ then $\uptrans{}^k h \leq \uptrans{}^k g$ \hfill [monotonicity];
\item\label{iteratedcoherence: mixed additivity} $\uptrans{}^k h - \uptrans{}^k g \leq \uptrans{}^k  (h-g)$ \hfill [mixed sub-additivity],
\end{enumerate}
for all $k \in \natz{}$, all $h,g \in \setofgambles{}(\statespace)$, all real $\mu$ and all real $\lambda \geq 0$.

Many of the results in this appendix will make use of the graph-theoretic concepts and notations that were defined in Section~\ref{Sect: accessibility and topical maps}.
Unless mentioned otherwise, we will always implicitly assume that they correspond to the graph $\mathscr{G}(\uptrans{})$ of $\uptrans{}$.
Note however that, due to Corollary~\ref{corollary: graphs are identical}, we could also equivalently consider the graphs $\mathscr{G}'(\uptrans{})$ or $\mathscr{G}'(\avuptrans{f}{})$.

\begin{lemma}\label{lemma: directed path}
\emph{\cite[Proposition~4]{Hermans:2012ie}}
For any two vertices $x$ and $y$, there is a directed path of length $k \in \nats{}$ from $x$ to $y$ if and only if $\uptrans{}^k \indica{y}(x) > 0$.
\end{lemma}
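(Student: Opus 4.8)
The plan is to prove both implications simultaneously by induction on the path length $k \in \nats{}$. The base case $k = 1$ is immediate from the definitions in Section~\ref{Sect: accessibility and topical maps}: there is a directed path of length $1$ from $x$ to $y$ precisely when there is an edge from $x$ to $y$ in $\mathscr{G}(\uptrans{})$, which by definition holds if and only if $\uptrans{}\indica{y}(x) > 0$.

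For the inductive step I would write $\uptrans{}^k\indica{y} = \uptrans{}(\uptrans{}^{k-1}\indica{y})$ and abbreviate $h \coloneqq \uptrans{}^{k-1}\indica{y}$; since $\indica{y} \geq 0$, boundedness [\ref{iteratedcoherence: bounds}] of the coherent iterate $\uptrans{}^{k-1}$ (Lemma~\ref{lemma: T^k is coherent}) gives $h \geq 0$. For the ``if'' direction, suppose there is a path $x = x'_0, x'_1, \ldots, x'_k = y$. Its tail is a path of length $k-1$ from $x'_1$ to $y$, so the induction hypothesis yields $h(x'_1) > 0$. As $h \geq 0$ everywhere, we have $h \geq h(x'_1)\indica{x'_1}$ pointwise, and monotonicity [\ref{transcoherence: monotonicity}] together with non-negative homogeneity [\ref{transcoherence: homogeneity}] then gives $\uptrans{}h(x) \geq h(x'_1)\,\uptrans{}\indica{x'_1}(x) > 0$, where strict positivity follows from the leading edge $x \to x'_1$.

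For the ``only if'' direction I would decompose $h = \sum_{z \in \statespace{}} h(z)\indica{z}$ and apply sub-additivity [\ref{transcoherence: subadditivity}] and non-negative homogeneity [\ref{transcoherence: homogeneity}] to obtain $\uptrans{}h(x) \leq \sum_{z \in \statespace{}} h(z)\,\uptrans{}\indica{z}(x)$. If $\uptrans{}^k\indica{y}(x) = \uptrans{}h(x) > 0$, then some summand must be strictly positive, giving a state $z$ with both $h(z) > 0$ and $\uptrans{}\indica{z}(x) > 0$; the former supplies, via the induction hypothesis, a path of length $k-1$ from $z$ to $y$, and the latter an edge $x \to z$, and these concatenate into a path of length $k$ from $x$ to $y$. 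This ``only if'' direction is the main obstacle, since for an upper operator the positivity of $\uptrans{}h(x)$ cannot be turned into positivity of $h$ at an accessible neighbour by monotonicity alone; it is precisely the sub-additive bound $\uptrans{}h \leq \sum_z h(z)\,\uptrans{}\indica{z}$---a consequence of coherence rather than mere topicality---that makes the argument go through.
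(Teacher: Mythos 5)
Your proposal is correct, but a comparison with ``the paper's proof'' is not quite possible here: the paper does not prove this lemma at all---it imports it verbatim as \cite[Proposition~4]{Hermans:2012ie}. What your argument provides is a self-contained derivation, and it is a sound one. The base case matches the paper's definition of an edge in $\mathscr{G}(\uptrans{})$ exactly; the ``if'' direction correctly exploits $h \coloneqq \uptrans{}^{k-1}\indica{y} \geq 0$ (via Lemma~\ref{lemma: T^k is coherent} and \ref{iteratedcoherence: bounds}) to get the pointwise bound $h \geq h(x'_1)\indica{x'_1}$ and then chains \ref{transcoherence: monotonicity} with \ref{transcoherence: homogeneity}; and the ``only if'' direction correctly uses the finite decomposition $h = \sum_{z} h(z)\indica{z}$ with \ref{transcoherence: subadditivity} and \ref{transcoherence: homogeneity} (the latter legitimate because every $h(z) \geq 0$) so that positivity of $\uptrans{}h(x)$ forces a state $z$ with $h(z) > 0$ and $\uptrans{}\indica{z}(x) > 0$, after which the induction hypothesis and concatenation finish the step. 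Your closing remark is also apt: the backward direction genuinely needs coherence (sub-additivity) rather than mere topicality, which is consistent with the fact that the paper must separately prove (Lemma~\ref{lemma: edge}) that the topical-map graph $\mathscr{G}'(\uptrans{})$ coincides with $\mathscr{G}(\uptrans{})$. One merit of your proof is that it stays entirely within the axiomatic framework \ref{transcoherence: bounds}--\ref{transcoherence: mixed additivity} that the paper declares as its only working assumptions, so it could be inserted into the appendix without appealing to the external reference.
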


\begin{lemma}\label{lemma: not from S to Sc}
For any maximal communication class $\mathcal{S}$, we have that $\uptrans{}^k \indica{\mathcal{S}^c} (x) = 0$ for all $x \in \mathcal{S}$ and all $k \in \nats{}$. 
\end{lemma}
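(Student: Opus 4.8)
The plan is to sandwich $\uptrans{}^k\indica{\mathcal{S}^c}(x)$ between $0$ and $0$. The lower bound is immediate: since $\indica{\mathcal{S}^c}\geq 0$ and $\mathcal{S}\neq\emptyset$, we have $\min\indica{\mathcal{S}^c}=0$, so boundedness [\ref{iteratedcoherence: bounds}] yields $\uptrans{}^k\indica{\mathcal{S}^c}(x)\geq 0$ for every $x\in\statespace{}$ and every $k\in\nats{}$. Everything then reduces to establishing the matching upper bound $\uptrans{}^k\indica{\mathcal{S}^c}(x)\leq 0$ for $x\in\mathcal{S}$.

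To obtain this upper bound, first I would decompose $\indica{\mathcal{S}^c}=\sum_{y\in\mathcal{S}^c}\indica{y}$ and apply sub-additivity of the iterate [\ref{iteratedcoherence: subadditivity}] to get $\uptrans{}^k\indica{\mathcal{S}^c}(x)\leq\sum_{y\in\mathcal{S}^c}\uptrans{}^k\indica{y}(x)$. It then suffices to show that each summand vanishes whenever $x\in\mathcal{S}$ and $y\in\mathcal{S}^c$. This is where maximality of $\mathcal{S}$ enters: by the definition of a maximal (undominated) communication class, $x\not\to y$ for every $x\in\mathcal{S}$ and every $y\in\mathcal{S}^c$, so in particular there is no directed path of any length from $x$ to $y$ (note that $x\neq y$, as they lie in disjoint sets). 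Lemma~\ref{lemma: directed path} then guarantees that $\uptrans{}^k\indica{y}(x)$ cannot be strictly positive; combined with the lower bound $\uptrans{}^k\indica{y}(x)\geq 0$ coming again from [\ref{iteratedcoherence: bounds}], this forces $\uptrans{}^k\indica{y}(x)=0$.

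Summing over $y\in\mathcal{S}^c$ gives $\uptrans{}^k\indica{\mathcal{S}^c}(x)\leq 0$, which together with the lower bound delivers the claimed equality $\uptrans{}^k\indica{\mathcal{S}^c}(x)=0$. The only degenerate situation is $\mathcal{S}=\statespace{}$, in which case $\mathcal{S}^c=\emptyset$ and $\indica{\mathcal{S}^c}=0$, so that $\uptrans{}^k\indica{\mathcal{S}^c}=0$ holds trivially (e.g.\ by non-negative homogeneity [\ref{iteratedcoherence: homogeneity}] with $\lambda=0$, or directly by boundedness).

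I do not anticipate a genuine obstacle: the argument is a direct combination of the sub-additivity of the iterated operator with the graph-theoretic characterisation of positivity in Lemma~\ref{lemma: directed path}. The only points requiring a touch of care are the passage from ``$x\not\to y$'' (no directed path of \emph{any} length) to ``no directed path of the specific length $k$ at hand'', and the observation that the contrapositive of Lemma~\ref{lemma: directed path} yields only non-positivity of $\uptrans{}^k\indica{y}(x)$, which must then be upgraded to equality via boundedness; both are routine.
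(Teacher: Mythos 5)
Your proposal is correct and follows essentially the same route as the paper's own proof: both sandwich $\uptrans{}^k\indica{\mathcal{S}^c}(x)$ between $0$ (via boundedness [\ref{iteratedcoherence: bounds}]) and $0$ (via the decomposition $\indica{\mathcal{S}^c}=\sum_{y\in\mathcal{S}^c}\indica{y}$, sub-additivity [\ref{iteratedcoherence: subadditivity}], and the combination of maximality of $\mathcal{S}$ with Lemma~\ref{lemma: directed path}). The extra remarks on the degenerate case $\mathcal{S}^c=\emptyset$ and on upgrading each summand to exact equality are harmless but not needed, since the paper's inequality chain already handles them.
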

\begin{proof}
Consider any $x \in \mathcal{S}$.
Then, since $\mathcal{S}$ is maximal, we have that $x \not\to y$ for any $y \in \mathcal{S}^c$, which by Lemma~\ref{lemma: directed path} implies that $\uptrans{}^k \indica{y} (x) \leq 0$ for all $k \in \nats{}$.
Hence,
\begin{align*}
0 \leq \uptrans{}^k \indica{\mathcal{S}^c} (x) 
 = \Big[ \uptrans{}^k \Big( \sum\nolimits_{y \in \mathcal{S}^c} \indica{y} \Big) \Big] (x) \leq \sum\nolimits_{y \in \mathcal{S}^c} \uptrans{}^k \indica{y} (x) \leq 0 \text{ for all } k \in \nats{},
\end{align*}
where the first step uses \ref{iteratedcoherence: bounds} and the third uses \ref{iteratedcoherence: subadditivity}.
\qed 
\end{proof}
In the following, we will use $\supnorm{\cdot}$ to denote the supremum norm defined by $\supnorm{h} \coloneqq \max_{x \in \statespace{}} \vert h(x) \vert$ for all $h \in \setofgambles{}(\statespace{})$.

\begin{lemma}\label{lemma: time average in maximal class only depends on value of f in maximal class}
For any maximal communication class $\mathcal{S}$, we have that $\avuptrans{f}{}  h(x)  =  \avuptrans{f}{}  ( h \indica{\mathcal{S}} )(x)$ for all $h \in \setofgambles{}(\statespace{})$ and all $x \in \mathcal{S}$.
\end{lemma}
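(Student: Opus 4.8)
The plan is to reduce the statement to a claim purely about the upper transition operator $\uptrans{}$. Since $\avuptrans{f}{} h = f + \uptrans{} h$ by definition, we have $\avuptrans{f}{} h(x) = f(x) + \uptrans{} h(x)$ and $\avuptrans{f}{}(h\indica{\mathcal{S}})(x) = f(x) + \uptrans{}(h\indica{\mathcal{S}})(x)$, so it suffices to prove that $\uptrans{} h(x) = \uptrans{}(h\indica{\mathcal{S}})(x)$ for every $h \in \setofgambles{}(\statespace{})$ and every $x \in \mathcal{S}$. The guiding intuition is that, because $\mathcal{S}$ is maximal, a single application of $\uptrans{}$ at a state inside $\mathcal{S}$ can never ``see'' the values of $h$ on $\mathcal{S}^c$, so replacing $h$ by $h\indica{\mathcal{S}}$ leaves the result unchanged.

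To make this precise, first I would record the one-step consequence of Lemma~\ref{lemma: not from S to Sc}, namely the case $k=1$: $\uptrans{}\indica{\mathcal{S}^c}(x) = 0$ for all $x \in \mathcal{S}$. I would then bound the ``error term'' $h\indica{\mathcal{S}^c}$ above by $\supnorm{h}\indica{\mathcal{S}^c}$, and likewise $-h\indica{\mathcal{S}^c}$ above by $\supnorm{h}\indica{\mathcal{S}^c}$, which are the inequalities that will convert the cut-off part of $h$ into a multiple of $\indica{\mathcal{S}^c}$.

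Next, I would establish the two inequalities using mixed sub-additivity [\ref{transcoherence: mixed additivity}]. Writing $h = h\indica{\mathcal{S}} + h\indica{\mathcal{S}^c}$, mixed sub-additivity gives $\uptrans{} h - \uptrans{}(h\indica{\mathcal{S}}) \leq \uptrans{}(h\indica{\mathcal{S}^c})$. By monotonicity [\ref{transcoherence: monotonicity}] and non-negative homogeneity [\ref{transcoherence: homogeneity}], for $x \in \mathcal{S}$ one has $\uptrans{}(h\indica{\mathcal{S}^c})(x) \leq \uptrans{}(\supnorm{h}\indica{\mathcal{S}^c})(x) = \supnorm{h}\,\uptrans{}\indica{\mathcal{S}^c}(x) = 0$, whence $\uptrans{} h(x) \leq \uptrans{}(h\indica{\mathcal{S}})(x)$. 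Symmetrically, applying mixed sub-additivity in the form $\uptrans{}(h\indica{\mathcal{S}}) - \uptrans{} h \leq \uptrans{}(-h\indica{\mathcal{S}^c})$ and using the same monotonicity/homogeneity bound yields $\uptrans{}(-h\indica{\mathcal{S}^c})(x) \leq 0$ for $x \in \mathcal{S}$, so $\uptrans{}(h\indica{\mathcal{S}})(x) \leq \uptrans{} h(x)$. Combining the two inequalities gives the desired equality, and adding $f(x)$ to both sides recovers the claim for $\avuptrans{f}{}$.

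I do not expect a serious obstacle here; the one point to be careful about is that mixed sub-additivity is precisely the property that lets one peel $h\indica{\mathcal{S}^c}$ out of $\uptrans{}$ in an \emph{additive} way in both directions — plain sub-additivity [\ref{transcoherence: subadditivity}] would deliver only one of the two inequalities. The essential structural input is the maximality of $\mathcal{S}$, which is already packaged into Lemma~\ref{lemma: not from S to Sc}.
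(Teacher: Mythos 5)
Your proof is correct and follows essentially the same route as the paper's: both decompose $h = h\indica{\mathcal{S}} + h\indica{\mathcal{S}^c}$, invoke the one-step case of Lemma~\ref{lemma: not from S to Sc} together with monotonicity [\ref{transcoherence: monotonicity}] and non-negative homogeneity [\ref{transcoherence: homogeneity}] to kill the $\indica{\mathcal{S}^c}$ term, and prove the two inequalities separately. The only cosmetic difference is that the paper uses plain sub-additivity [\ref{transcoherence: subadditivity}] for the first inequality and mixed sub-additivity [\ref{transcoherence: mixed additivity}] for the second, whereas you use mixed sub-additivity for both; these are equivalent rearrangements.
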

\begin{proof}
Fix any $h \in \setofgambles{}(\statespace{})$ and any $x \in \mathcal{S}$.
By sub-additivity [\ref{transcoherence: subadditivity}], we have that $\uptrans{}h(x) \leq \uptrans{}(h \indica{\mathcal{S}})(x) + \uptrans{}(h \indica{\mathcal{S}^c})(x)$.
Since $h \indica{\mathcal{S}^c} \leq \supnorm{h} \indica{\mathcal{S}^c}$, monotonicity [\ref{transcoherence: monotonicity}] therefore implies that  
\begin{align*}
\uptrans{}h(x) \leq \uptrans{}(h \indica{\mathcal{S}})(x) + \uptrans{}(\supnorm{h} \indica{\mathcal{S}^c})(x) 
&= \uptrans{}(h \indica{\mathcal{S}})(x) + \supnorm{h} \uptrans{}\indica{\mathcal{S}^c}(x) \\
&= \uptrans{}(h \indica{\mathcal{S}})(x),
\end{align*}
where the first equality follows from non-negative homogeneity [\ref{transcoherence: homogeneity}] and the second from Lemma~\ref{lemma: not from S to Sc}.
Hence, we obtain that $\avuptrans{f}{} h (x) \leq \avuptrans{f}{} (h \indica{\mathcal{S}}) (x)$.
To prove the converse inequality, observe that 
\begin{align*}
\uptrans{}h(x) \geq \uptrans{}(h \indica{\mathcal{S}})(x) - \uptrans{}( - h \indica{\mathcal{S}^c})(x)
&\geq \uptrans{}(h \indica{\mathcal{S}})(x) - \uptrans{}(\supnorm{h} \indica{\mathcal{S}^c})(x) \\
&= \uptrans{}(h \indica{\mathcal{S}})(x) - \supnorm{h} \uptrans{}\indica{\mathcal{S}^c}(x) \\
&= \uptrans{}(h \indica{\mathcal{S}})(x),
\end{align*}
where the first step follows from \ref{transcoherence: mixed additivity}, the second follows from $- h \indica{\mathcal{S}^c} \leq \supnorm{h} \indica{\mathcal{S}^c}$ and monotonicity [\ref{transcoherence: monotonicity}], the third follows from non-negative homogeneity [\ref{transcoherence: homogeneity}] and the last from Lemma~\ref{lemma: not from S to Sc}.
So, we have that $\uptrans{} h (x) \geq \uptrans{} (h \indica{\mathcal{S}}) (x)$ and therefore also that $\avuptrans{f}{} h (x) \geq \avuptrans{f}{} (h \indica{\mathcal{S}}) (x)$.
Hence, $\avuptrans{f}{} h (x) = \avuptrans{f}{} (h \indica{\mathcal{S}}) (x)$ for all $h \in \setofgambles{}(\statespace{})$ and all $x \in \mathcal{S}$.
\qed
\end{proof}


Consider any maximal communication class $\mathcal{S}$.
To prove Proposition~\ref{proposition: if top class then eigenvector in top class}, we will use the following notations that allow us to confine the dynamics of the process to the class $\mathcal{S}$.
For any $h \in \setofgambles{}(\statespace{})$, let $h \vert_{\mathcal{S}} \in \setofgambles{}(\mathcal{S})$ denote the restriction of $h$ to the domain $\mathcal{S}$.
Additionally, for any $h \in \setofgambles{}(\mathcal{S})$, we let $h^\uparrow \in \setofgambles{}(\statespace{})$ denote the zero-extension of $h$ into $\setofgambles{}(\statespace{})$, which takes the value $h(x)$ for $x \in \mathcal{S}$ and $0$ elsewhere.
Then note that $( h \vert_\mathcal{S} )^\uparrow = h \indica{\mathcal{S}}$ for any $h \in \setofgambles{}(\statespace{})$ and $( g^\uparrow )\vert_\mathcal{S} = g$ for any $g \in \setofgambles{}(\mathcal{S})$.
Let $\avuptrans{f,\mathcal{S}}{} \colon \setofgambles{}(\mathcal{S}) \to \setofgambles{}(\mathcal{S})$ be defined by $\avuptrans{f,\mathcal{S}}{} h \coloneqq (\avuptrans{f}{} h^\uparrow)\vert_{\mathcal{S}}$ for all $h \in \setofgambles{}(\mathcal{S})$.

\begin{lemma}\label{lemma: G^k is equal to G_R^k}
For any maximal communication class $\mathcal{S}$, we have that $(\avuptrans{f}{k} h)\vert_{\mathcal{S}} = \avuptrans{f,\mathcal{S}}{k} (h\vert_{\mathcal{S}})$ for all $h \in \setofgambles{}(\statespace{})$ and all $k \in \nats{}$.
\end{lemma}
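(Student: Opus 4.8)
The plan is to proceed by induction on $k$, isolating the single-step restriction identity as the base case and then bootstrapping it to arbitrary iterates. The entire analytic content is already carried by Lemma~\ref{lemma: time average in maximal class only depends on value of f in maximal class}, so the argument should be purely formal once that lemma is in place.

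First I would establish the base case $k=1$, namely $(\avuptrans{f}{} h)\vert_{\mathcal{S}} = \avuptrans{f,\mathcal{S}}{}(h\vert_{\mathcal{S}})$ for every $h \in \setofgambles{}(\statespace{})$. Unfolding the definition of $\avuptrans{f,\mathcal{S}}{}$ gives $\avuptrans{f,\mathcal{S}}{}(h\vert_{\mathcal{S}}) = (\avuptrans{f}{}(h\vert_{\mathcal{S}})^{\uparrow})\vert_{\mathcal{S}}$, and since $(h\vert_{\mathcal{S}})^{\uparrow} = h\indica{\mathcal{S}}$ this equals $(\avuptrans{f}{}(h\indica{\mathcal{S}}))\vert_{\mathcal{S}}$. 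Lemma~\ref{lemma: time average in maximal class only depends on value of f in maximal class} states precisely that $\avuptrans{f}{}(h\indica{\mathcal{S}})$ and $\avuptrans{f}{} h$ agree on $\mathcal{S}$, so restricting to $\mathcal{S}$ yields $(\avuptrans{f}{} h)\vert_{\mathcal{S}}$, which is the base case.

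For the inductive step, I would assume the identity at level $k$ for \emph{every} $h \in \setofgambles{}(\statespace{})$ and write $(\avuptrans{f}{k+1} h)\vert_{\mathcal{S}} = (\avuptrans{f}{}(\avuptrans{f}{k} h))\vert_{\mathcal{S}}$. The crucial move is to apply the base-case identity not to $h$ but to the iterate $g \coloneqq \avuptrans{f}{k} h$, which gives $(\avuptrans{f}{} g)\vert_{\mathcal{S}} = \avuptrans{f,\mathcal{S}}{}(g\vert_{\mathcal{S}})$. Substituting $g\vert_{\mathcal{S}} = (\avuptrans{f}{k} h)\vert_{\mathcal{S}} = \avuptrans{f,\mathcal{S}}{k}(h\vert_{\mathcal{S}})$ by the induction hypothesis, and then using $\avuptrans{f,\mathcal{S}}{}\,\avuptrans{f,\mathcal{S}}{k} = \avuptrans{f,\mathcal{S}}{k+1}$, closes the induction.

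I do not anticipate a genuine obstacle, since all the substantive reasoning—that the dynamics on a maximal communication class cannot leak in from $\mathcal{S}^c$—is packaged in Lemma~\ref{lemma: time average in maximal class only depends on value of f in maximal class}, which itself rests on coherence of $\uptrans{}$ through Lemma~\ref{lemma: not from S to Sc}. The only point demanding a little care is to invoke the single-step identity at each stage with the \emph{current} iterate $\avuptrans{f}{k} h$ rather than the original $h$, so that the induction hypothesis applies to its restriction; proving the base case for arbitrary $h \in \setofgambles{}(\statespace{})$ from the outset makes this entirely routine.
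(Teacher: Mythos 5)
Your proof is correct and follows essentially the same route as the paper's: induction on $k$, with the base case $k=1$ extracted from Lemma~\ref{lemma: time average in maximal class only depends on value of f in maximal class} via the identity $(h\vert_{\mathcal{S}})^{\uparrow} = h\indica{\mathcal{S}}$, and the inductive step obtained by applying the single-step identity to the iterate $\avuptrans{f}{k}h$ before invoking the induction hypothesis. Nothing is missing.
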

\begin{proof}
We use an induction argument in $k \in \nats{}$.
That the statement holds for $k = 1$ follows immediately from Lemma~\ref{lemma: time average in maximal class only depends on value of f in maximal class}.
Indeed, for any $h \in \setofgambles{}(\statespace{})$, Lemma~\ref{lemma: time average in maximal class only depends on value of f in maximal class} says that $\avuptrans{f}{}h (x) = \avuptrans{f}{}(h \indica{\mathcal{S}}) (x)$ for all $x \in \mathcal{S}$ or, equivalently, that $(\avuptrans{f}{}h)\vert_{\mathcal{S}} = \big( \avuptrans{f}{}(h \indica{\mathcal{S}}) \big)\vert_{\mathcal{S}}$.
This implies, by the definition of $\avuptrans{f,\mathcal{S}}{}$ and the fact that $h \indica{\mathcal{S}} = (h \vert_{\mathcal{S}})^\uparrow$, that $(\avuptrans{f}{} h)\vert_{\mathcal{S}} = \avuptrans{f,\mathcal{S}}{} (h\vert_{\mathcal{S}})$ for any $h \in \setofgambles{}(\statespace{})$, which provides an induction base.

Now assume that the statement holds for all $i \in \{1,\cdots,k\}$, with $k \in \nats{}$.
Then, for any $h \in \setofgambles{}(\statespace{})$, we have that
\begin{align*}
(\avuptrans{f}{k+1} h)\vert_{\mathcal{S}} 
= \big( \avuptrans{f}{} (\avuptrans{f}{k} h) \big)\vert_{\mathcal{S}}
= \avuptrans{f,\mathcal{S}}{} \big((\avuptrans{f}{k} h)\vert_\mathcal{S} \big)
= \avuptrans{f,\mathcal{S}}{} \big( \avuptrans{f,\mathcal{S}}{k} (h\vert_\mathcal{S}) \big)
= \avuptrans{f,\mathcal{S}}{k+1} (h\vert_\mathcal{S}),
\end{align*}
where the second equality follows from the fact that the statement holds for $i=1$ and the third equality follows from the assumption that the statement holds for~$i=k$.
Combined with the induction base, this concludes the proof.
\qed
\end{proof}

\begin{lemma}\label{lemma: G_R is topical}
For any maximal communication class $\mathcal{S}$, the map $\avuptrans{f,\mathcal{S}}{}$ is topical.
\end{lemma}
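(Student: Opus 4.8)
The plan is to verify directly that $\avuptrans{f,\mathcal{S}}{}$ satisfies the two defining properties \ref{topical: constant addivity} and \ref{topical: monotonicity} of a topical map, working over the restricted space $\setofgambles{}(\mathcal{S})$ which we identify with $\reals{}^{\vert\mathcal{S}\vert}$. The key observation is that $\avuptrans{f,\mathcal{S}}{}$ is built from $\avuptrans{f}{}$ by pre-composing with the zero-extension $h \mapsto h^\uparrow$ and post-composing with the restriction $g \mapsto g\vert_\mathcal{S}$, and that $\avuptrans{f}{}$ is itself topical (as noted in Section~\ref{Sect: accessibility and topical maps}). So the real content is to track how these two auxiliary operations interact with constant addition and with the ordering.

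First I would check constant additivity. For any $h \in \setofgambles{}(\mathcal{S})$ and any real $\mu$, the difficulty is that the zero-extension does \emph{not} commute with adding a constant: $(\mu + h)^\uparrow = \mu\indica{\mathcal{S}} + h^\uparrow \neq \mu + h^\uparrow$ in general, since the extension is $0$ off $\mathcal{S}$ rather than $\mu$. The way around this is to route the constant through the ambient operator using Lemma~\ref{lemma: time average in maximal class only depends on value of f in maximal class}, which tells us that on $\mathcal{S}$ the value of $\avuptrans{f}{}$ depends only on the values of its argument inside $\mathcal{S}$. Concretely, $(\mu + h)^\uparrow$ and $\mu + h^\uparrow$ agree on $\mathcal{S}$, so by Lemma~\ref{lemma: time average in maximal class only depends on value of f in maximal class} (applied after writing $\avuptrans{f}{}g(x)=\avuptrans{f}{}(g\indica{\mathcal{S}})(x)$ for $x\in\mathcal{S}$) we get $\avuptrans{f}{}((\mu+h)^\uparrow)(x) = \avuptrans{f}{}(\mu + h^\uparrow)(x)$ for all $x\in\mathcal{S}$. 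Now I can apply constant additivity \ref{topical: constant addivity} of the ambient topical map $\avuptrans{f}{}$ to peel off the $\mu$, and then restrict to $\mathcal{S}$:
\begin{align*}
\avuptrans{f,\mathcal{S}}{}(\mu + h)
= \big(\avuptrans{f}{}((\mu+h)^\uparrow)\big)\vert_{\mathcal{S}}
= \big(\avuptrans{f}{}(\mu + h^\uparrow)\big)\vert_{\mathcal{S}}
= \big(\mu + \avuptrans{f}{}h^\uparrow\big)\vert_{\mathcal{S}}
= \mu + \avuptrans{f,\mathcal{S}}{}h,
\end{align*}
which is exactly \ref{topical: constant addivity} for $\avuptrans{f,\mathcal{S}}{}$.

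Next I would check monotonicity. This step is cleaner: if $h \leq g$ in $\setofgambles{}(\mathcal{S})$, then their zero-extensions satisfy $h^\uparrow \leq g^\uparrow$ pointwise on all of $\statespace{}$ (they agree off $\mathcal{S}$, both being $0$ there, and $h\leq g$ on $\mathcal{S}$). Applying monotonicity \ref{topical: monotonicity} of the ambient topical map $\avuptrans{f}{}$ gives $\avuptrans{f}{}h^\uparrow \leq \avuptrans{f}{}g^\uparrow$ on $\statespace{}$, and restricting this inequality to $\mathcal{S}$ yields $\avuptrans{f,\mathcal{S}}{}h \leq \avuptrans{f,\mathcal{S}}{}g$. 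Together with the constant-additivity computation above, this establishes both topical properties and completes the proof.

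The main obstacle is the constant-additivity step, precisely because the zero-extension inserts $0$ rather than $\mu$ off the class $\mathcal{S}$, so one cannot naively push the constant through the extension. The resolution hinges on Lemma~\ref{lemma: time average in maximal class only depends on value of f in maximal class}: since $\avuptrans{f}{}$ evaluated at a point of $\mathcal{S}$ ignores the argument's values outside $\mathcal{S}$, the mismatch off $\mathcal{S}$ between $(\mu+h)^\uparrow$ and $\mu + h^\uparrow$ is invisible after restriction, which is what lets the argument go through. Everything else is routine bookkeeping with the zero-extension and restriction maps.
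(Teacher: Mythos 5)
Your proposal is correct and follows essentially the same route as the paper: the paper also handles constant additivity by exploiting that $\avuptrans{f}{}$ evaluated at points of $\mathcal{S}$ ignores values outside $\mathcal{S}$ (it invokes the $k=1$ case of Lemma~\ref{lemma: G^k is equal to G_R^k}, which is itself immediate from Lemma~\ref{lemma: time average in maximal class only depends on value of f in maximal class}, the lemma you cite directly), and it likewise dispatches monotonicity via the order-preserving nature of the zero-extension and restriction maps.
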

\begin{proof}
To prove \ref{topical: constant addivity}, consider any $\mu \in \reals{}$ and any $h \in \setofgambles{}(\mathcal{S})$.
Since $\avuptrans{f}{}$ satisfies \ref{topical: constant addivity}, we have that $\avuptrans{f}{}(\mu + h^\uparrow) = \mu + \avuptrans{f}{}(h^\uparrow)$ and therefore, also that $\big( \avuptrans{f}{}(\mu + h^\uparrow) \big)\vert_\mathcal{S} = \mu + (\avuptrans{f}{} h^\uparrow )\vert_\mathcal{S} = \mu + \avuptrans{f,\mathcal{S}}{} h$.
Moreover, by Lemma~\ref{lemma: G^k is equal to G_R^k}, we have that $\big( \avuptrans{f}{}(\mu + h^\uparrow) \big)\vert_\mathcal{S} = \avuptrans{f,\mathcal{S}}{}\big((\mu + h^\uparrow)\vert_\mathcal{S} \big) = \avuptrans{f,\mathcal{S}}{}(\mu + h)$, implying that \ref{topical: constant addivity} holds.
Finally, that monotonicity [\ref{topical: monotonicity}] holds for $\avuptrans{f,\mathcal{S}}{}$ follows directly from its definition and the fact $\avuptrans{f}{}$ is monotone.
\qed
\end{proof}

\begin{lemma}\label{lemma: T_f,S has an eigenvector}
For any maximal communication class $\mathcal{S}$, the map $\avuptrans{f,\mathcal{S}}{}$ has an (additive) eigenvector.
\end{lemma}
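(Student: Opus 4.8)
The plan is to obtain the eigenvector by applying Theorem~\ref{theorem: strongly connected then eigenvector} to $\avuptrans{f,\mathcal{S}}{}$. Identifying $\setofgambles{}(\mathcal{S})$ with $\reals{}^{\vert\mathcal{S}\vert}$, we already know from Lemma~\ref{lemma: G_R is topical} that $\avuptrans{f,\mathcal{S}}{}$ is topical, so the only thing that remains is to verify that its associated graph $\mathscr{G}'(\avuptrans{f,\mathcal{S}}{})$---whose vertices are the states in $\mathcal{S}$---is strongly connected. Once this is established, Theorem~\ref{theorem: strongly connected then eigenvector} immediately yields an eigenvector of $\avuptrans{f,\mathcal{S}}{}$ in $\setofgambles{}(\mathcal{S})$.

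First I would compute the edge relation of $\mathscr{G}'(\avuptrans{f,\mathcal{S}}{})$ explicitly. For any $x,y\in\mathcal{S}$, the zero-extension $(\alpha\indica{y})^\uparrow$ of the indicator $\alpha\indica{y}\in\setofgambles{}(\mathcal{S})$ coincides with $\alpha\indica{y}$ regarded as an element of $\setofgambles{}(\statespace{})$, since $y\in\mathcal{S}$. Hence, using the definition $\avuptrans{f,\mathcal{S}}{}h = (\avuptrans{f}{}h^\uparrow)\vert_{\mathcal{S}}$, the identity $\avuptrans{f}{}h = f + \uptrans{}h$ and non-negative homogeneity [\ref{transcoherence: homogeneity}], one finds $[\avuptrans{f,\mathcal{S}}{}(\alpha\indica{y})](x) = f(x) + \alpha\,\uptrans{}\indica{y}(x)$ for all $\alpha\geq0$. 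This expression tends to $+\infty$ as $\alpha\to+\infty$ if and only if $\uptrans{}\indica{y}(x)>0$, which is exactly the condition for an edge from $x$ to $y$ in $\mathscr{G}(\uptrans{})$. In other words---just as in Lemma~\ref{lemma: edge}---the graph $\mathscr{G}'(\avuptrans{f,\mathcal{S}}{})$ has precisely the edges of the subgraph of $\mathscr{G}(\uptrans{})$ induced on the vertex set $\mathcal{S}$.

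The main obstacle is then to argue that this induced subgraph is strongly connected, i.e.\ that any two states of $\mathcal{S}$ communicate using only vertices inside $\mathcal{S}$. The point to check is that a communication class is a strongly connected component: if $x,y\in\mathcal{S}$ and $z$ lies on a directed path from $x$ to $y$, then $x\to z$ and $z\to y$; since $y\to x$ (as $x\leftrightarrow y$), we get $z\to y\to x$, so $z\leftrightarrow x$ and hence $z\in\mathcal{S}$. Thus every intermediate vertex of such a path stays in $\mathcal{S}$, which shows that the induced subgraph inherits strong connectedness from the fact that all pairs in $\mathcal{S}$ communicate in $\mathscr{G}(\uptrans{})$. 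Combined with the edge computation above, this gives that $\mathscr{G}'(\avuptrans{f,\mathcal{S}}{})$ is strongly connected, and Theorem~\ref{theorem: strongly connected then eigenvector} then delivers the desired eigenvector. I expect this combinatorial step to be the only delicate part; the rest is a direct unwinding of definitions together with the coherence properties already recorded for $\uptrans{}$.
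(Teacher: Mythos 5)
Your proposal is correct and takes essentially the same route as the paper's own proof: both identify $\mathscr{G}'(\avuptrans{f,\mathcal{S}}{})$ with the restriction of $\mathscr{G}(\uptrans{})$ (equivalently, of $\mathscr{G}'(\avuptrans{f}{})$, via Corollary~\ref{corollary: graphs are identical}) to the vertices in $\mathcal{S}$, argue that this restricted graph is strongly connected, and then combine Lemma~\ref{lemma: G_R is topical} with Theorem~\ref{theorem: strongly connected then eigenvector} to obtain the eigenvector. The only (immaterial) difference is in how paths are kept inside $\mathcal{S}$: the paper invokes maximality ($x \not\to z$ for $x \in \mathcal{S}$ and $z \in \mathcal{S}^c$), whereas you observe that any vertex $z$ on a path between communicating states satisfies $z \leftrightarrow x$ and so lies in $\mathcal{S}$, an argument that does not even require $\mathcal{S}$ to be maximal.
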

\begin{proof}

Consider any two states $x$ and $y$ in $\mathcal{S}$.
Then, by definition, there is an edge from $x$ to $y$ in the graph $\mathscr{G}'(\avuptrans{f,\mathcal{S}}{})$ if $\lim_{\alpha \to +\infty} \avuptrans{f,\mathcal{S}}{}(\alpha \indica{y}) (x) = +\infty$.
Moreover, by the definition of $\avuptrans{f,\mathcal{S}}{}$, we have that $\avuptrans{f,\mathcal{S}}{}(\alpha \indica{y}) = \big(\avuptrans{f}{}(\alpha \indica{y})^\uparrow\big)\vert_\mathcal{S} = \big(\avuptrans{f}{}(\alpha \indica{y})\big)\vert_\mathcal{S}$ for all $\alpha \in \reals{}$, where we used $\indica{y}$ to denote the indicator of $y$ in both $\setofgambles{}(\mathcal{S})$ and $\setofgambles{}(\statespace{})$ depending on the domain of the considered map. 
Hence, there is an edge from $x$ to $y$ in the graph $\mathscr{G}'(\avuptrans{f,\mathcal{S}}{})$ if and only if $\lim_{\alpha \to +\infty} \avuptrans{f}{}(\alpha \indica{y}) (x) = +\infty$ or, equivalently, if and only if there is an edge from $x$ to $y$ in the graph $\mathscr{G}'(\avuptrans{f}{})$.
So $\mathscr{G}'(\avuptrans{f,\mathcal{S}}{})$ is identical to the restriction of the graph $\mathscr{G}'(\avuptrans{f}{})$ to the vertices in $\mathcal{S}$.
Now, $x$ and $y$ are two states in the maximal communication class $\mathcal{S}$ of $\mathscr{G}(\uptrans{})$, so we have that $x \to y$ in $\mathscr{G}(\uptrans{})$.
Moreover, the directed path from $x$ to $y$ remains within the maximal class $\mathcal{S}$, because $x \not\to z$ for any $x \in \mathcal{S}$ and any $z \in \mathcal{S}^c$.
Then, since $\mathscr{G}(\uptrans{})$ is identical to $\mathscr{G}'(\avuptrans{f}{})$ because of Corollary~\ref{corollary: graphs are identical}, and since $\mathscr{G}'(\avuptrans{f,\mathcal{S}}{})$ is the restriction of $\mathscr{G}'(\avuptrans{f}{})$ to $\mathcal{S}$, we find that $x \to y$ in $\mathscr{G}'(\avuptrans{f,\mathcal{S}}{})$.
Since this holds for any two vertices in $\mathscr{G}'(\avuptrans{f,\mathcal{S}}{})$, it follows that $\mathscr{G}'(\avuptrans{f,\mathcal{S}}{})$ is strongly connected.
Finally, $\avuptrans{f,\mathcal{S}}{}$ is also topical by Lemma~\ref{lemma: G_R is topical}, so Theorem~\ref{theorem: strongly connected then eigenvector} guarantees the existence of an (additive) eigenvector $h \in \setofgambles{}(\mathcal{S})$.
\qed
\end{proof}

\begin{lemma}\label{lemma: if top class then eigenvector in top class}
For any maximal communication class $\mathcal{S}$ and any $x \in \mathcal{S}$, the upper expected time average $\upprev{\mathrm{av},k}(f \vert x)$ converges to a constant that does not depend on the initial state $x$.
\end{lemma}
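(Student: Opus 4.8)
The plan is to reduce the claim to the behaviour of the restricted operator $\avuptrans{f,\mathcal{S}}{}$, for which all the needed structure has already been assembled. Recall from Equation~\eqref{Eq: recursive expression 2} that $\upprev{\mathrm{av},k}(f \vert x) = \tfrac{1}{k+1}[\avuptrans{f}{k+1}(0)](x)$ for every $x \in \statespace{}$ and $k \in \natz{}$. For $x \in \mathcal{S}$, I would evaluate the right-hand side on $\mathcal{S}$ and invoke Lemma~\ref{lemma: G^k is equal to G_R^k}, which (since the restriction of the zero gamble is again the zero gamble on $\mathcal{S}$) gives $[\avuptrans{f}{k+1}(0)](x) = [\avuptrans{f,\mathcal{S}}{k+1}(0)](x)$. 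Thus on the class $\mathcal{S}$ the upper expected time average is governed entirely by the restricted operator: $\upprev{\mathrm{av},k}(f \vert x) = \tfrac{1}{k+1}[\avuptrans{f,\mathcal{S}}{k+1}(0)](x)$.

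Next I would exploit the spectral information about $\avuptrans{f,\mathcal{S}}{}$. By Lemma~\ref{lemma: G_R is topical} this map is topical, and by Lemma~\ref{lemma: T_f,S has an eigenvector} it has an additive eigenvector $h \in \setofgambles{}(\mathcal{S})$, say with eigenvalue $\mu \in \reals{}$, so that $\avuptrans{f,\mathcal{S}}{k} h = h + k\mu$ for all $k \in \natz{}$. Dividing by $k$ shows that $\avuptrans{f,\mathcal{S}}{k} h / k \to \mu$; crucially, $\mu$ here is a genuine scalar, so this limit is the \emph{same constant} in every coordinate. Since the limit exists for the particular starting point $h$, Lemma~\ref{lemma: cycle time exists independently of starting point} transfers it to every starting point --- in particular to the zero gamble --- and forces all these limits to agree. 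Hence $\avuptrans{f,\mathcal{S}}{k}(0)/k \to \mu$ (the constant vector) as $k \to +\infty$.

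Finally I would combine the two observations. Writing $j = k+1$, the identity from the first step reads $\upprev{\mathrm{av},k}(f \vert x) = \tfrac{1}{j}[\avuptrans{f,\mathcal{S}}{j}(0)](x)$, and letting $k \to +\infty$ (equivalently $j \to +\infty$) yields $\lim_{k \to +\infty}\upprev{\mathrm{av},k}(f \vert x) = \mu$ for every $x \in \mathcal{S}$. Since $\mu$ does not depend on $x$, this is exactly the asserted state-independent convergence. I do not anticipate a genuine obstacle here: the substantive work --- the existence of the eigenvector on the restricted graph (Lemma~\ref{lemma: T_f,S has an eigenvector}) and the confinement identity of Lemma~\ref{lemma: G^k is equal to G_R^k} --- is already done, so this lemma is essentially a bookkeeping assembly. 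The only points demanding care are the harmless index shift between $k$ and $k+1$ and the observation that an additive eigenvalue is a scalar, which is precisely what makes the common limit independent of the state within $\mathcal{S}$.
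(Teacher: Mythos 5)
Your proposal is correct and follows essentially the same route as the paper's own proof: both reduce $\upprev{\mathrm{av},k}(f \vert x)$ on $\mathcal{S}$ to the restricted operator via Equation~\eqref{Eq: recursive expression 2} and Lemma~\ref{lemma: G^k is equal to G_R^k}, then combine the eigenvector from Lemma~\ref{lemma: T_f,S has an eigenvector} with topicality (Lemma~\ref{lemma: G_R is topical}) and Lemma~\ref{lemma: cycle time exists independently of starting point} to transfer the limit $\mu$ to the zero gamble. The only difference is the order of the two steps, which is immaterial.
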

\begin{proof}
Consider any maximal communication class $\mathcal{S}$.
Lemma~\ref{lemma: T_f,S has an eigenvector} guarantees the existence of an eigenvector $h \in \setofgambles{}(\mathcal{S})$ of $\avuptrans{f,\mathcal{S}}{}$, so we have that 
\begin{align*}
\lim_{k \to +\infty} \avuptrans{f,\mathcal{S}}{k} (h) / k = \lim_{k \to +\infty} (h + k \mu) / k = \mu,
\end{align*}
where $\mu \in \reals{}$ is the eigenvalue corresponding to $h$.
Since $\avuptrans{f,\mathcal{S}}{}$ is topical due to Lemma~\ref{lemma: G_R is topical}, Lemma~\ref{lemma: cycle time exists independently of starting point} then also implies that $\smash{\lim_{k \to +\infty} \avuptrans{f,\mathcal{S}}{k} (0 \vert_{\mathcal{S}}) / k = \mu}$, with $0$ the zero vector in $\setofgambles{}(\statespace{})$.
Moreover, we have that
\begin{align*}
\upprev{\mathrm{av},k}(f \vert x) 
= \tfrac{1}{k+1} \big[ \, \avuptrans{f}{(k+1)} (0) \big](x) 
= \tfrac{1}{k+1} \big[ \, \avuptrans{f,\mathcal{S}}{(k+1)} (0\vert_{\mathcal{S}}) \big] (x)
\end{align*}
for all $x \in \mathcal{S}$ and all $k \in \natz{}$, where the first step follows from Equation~\eqref{Eq: recursive expression 2} and the second from Lemma~\ref{lemma: G^k is equal to G_R^k}.
This allows us to conclude that $\smash{\lim_{k \to +\infty} \upprev{\mathrm{av},k}(f \vert x) = \mu}$ for all $x \in \mathcal{S}$.
\qed
\end{proof}

\begin{lemma}\label{lemma: average does not depend on f outside S}
For any maximal communication class $\mathcal{S}$, we have that $\tilde{m}_{f,k} \indica{\mathcal{S}} = \tilde{m}_{g,k} \indica{\mathcal{S}}$ for any two $f,g \in \setofgambles{}(\statespace{})$ such that $g \indica{\mathcal{S}} = f \indica{\mathcal{S}}$ and all $k \in \natz{}$.
\end{lemma}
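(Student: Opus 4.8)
The plan is to prove the claim by induction on $k \in \natz{}$, with the only substantial ingredient being Lemma~\ref{lemma: time average in maximal class only depends on value of f in maximal class}. Recall that since $\avuptrans{f}{}h = f + \uptrans{}h$, that lemma is equivalent to the statement that $\uptrans{}h(x) = \uptrans{}(h\indica{\mathcal{S}})(x)$ for every $h \in \setofgambles{}(\statespace{})$ and every $x \in \mathcal{S}$; in words, the value of $\uptrans{}h$ at a state inside a maximal class $\mathcal{S}$ ignores the values of $h$ outside $\mathcal{S}$. This \emph{locality} property is exactly what lets the recursion for $\tilde{m}_{f,k}$ stay confined to $\mathcal{S}$.

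For the base case $k = 0$, I would simply observe that $\tilde{m}_{f,0} = f$ and $\tilde{m}_{g,0} = g$, so $\tilde{m}_{f,0}\indica{\mathcal{S}} = f\indica{\mathcal{S}} = g\indica{\mathcal{S}} = \tilde{m}_{g,0}\indica{\mathcal{S}}$ is precisely the hypothesis. For the inductive step, suppose $\tilde{m}_{f,k-1}\indica{\mathcal{S}} = \tilde{m}_{g,k-1}\indica{\mathcal{S}}$ for some $k \in \nats{}$ and fix an arbitrary $x \in \mathcal{S}$. Unfolding the recursion~\eqref{Eq: recursive expression} gives $\tilde{m}_{f,k}(x) = f(x) + \uptrans{}\tilde{m}_{f,k-1}(x)$. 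Applying the reformulated Lemma~\ref{lemma: time average in maximal class only depends on value of f in maximal class} at $x \in \mathcal{S}$ replaces $\uptrans{}\tilde{m}_{f,k-1}(x)$ by $\uptrans{}(\tilde{m}_{f,k-1}\indica{\mathcal{S}})(x)$, which by the induction hypothesis equals $\uptrans{}(\tilde{m}_{g,k-1}\indica{\mathcal{S}})(x)$, and one further application of the lemma turns this back into $\uptrans{}\tilde{m}_{g,k-1}(x)$. Since $x \in \mathcal{S}$ also forces $f(x) = g(x)$, I obtain $\tilde{m}_{f,k}(x) = g(x) + \uptrans{}\tilde{m}_{g,k-1}(x) = \tilde{m}_{g,k}(x)$; as $x \in \mathcal{S}$ was arbitrary and both sides vanish off $\mathcal{S}$ after multiplication by $\indica{\mathcal{S}}$, this yields $\tilde{m}_{f,k}\indica{\mathcal{S}} = \tilde{m}_{g,k}\indica{\mathcal{S}}$, closing the induction.

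I do not expect a genuine obstacle here: the argument is a routine induction whose entire content is the locality property packaged in Lemma~\ref{lemma: time average in maximal class only depends on value of f in maximal class}. The single point that merits care is that the operators $\avuptrans{f}{}$ and $\avuptrans{g}{}$ carry different additive terms $f$ and $g$; this is harmless because we only ever evaluate at states in $\mathcal{S}$, where $f$ and $g$ agree, and because $\uptrans{}$ at such states depends only on the already-matched restrictions to $\mathcal{S}$. As an alternative, I could bypass the explicit induction by invoking Lemma~\ref{lemma: G^k is equal to G_R^k} to write $(\tilde{m}_{f,k})\vert_{\mathcal{S}} = \avuptrans{f,\mathcal{S}}{k}(f\vert_{\mathcal{S}})$ and noting that the reduced operator $\avuptrans{f,\mathcal{S}}{}$ depends on $f$ only through $f\vert_{\mathcal{S}}$, so that it coincides with $\avuptrans{g,\mathcal{S}}{}$ under the hypothesis; but the direct induction above is more self-contained.
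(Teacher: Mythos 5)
Your proof is correct and takes essentially the same route as the paper: an induction on $k$ whose inductive step uses Lemma~\ref{lemma: time average in maximal class only depends on value of f in maximal class} twice (to insert and remove $\indica{\mathcal{S}}$ around the induction hypothesis) together with $f(x)=g(x)$ for $x\in\mathcal{S}$. Your reformulation of that lemma as the locality property $\uptrans{}h(x)=\uptrans{}(h\indica{\mathcal{S}})(x)$ is valid, since the additive term $f$ cancels, and is only a cosmetic repackaging of the paper's argument.
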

\begin{proof}
Let $\mathcal{S}$ be a maximal communication class.
Fix any two $f,g \in \setofgambles{}(\statespace{})$ such that $g \indica{\mathcal{S}} = f \indica{\mathcal{S}}$ and let $\avuptrans{f}{}(\cdot) \coloneqq f + \uptrans{}(\cdot)$ and $\avuptrans{g}{}(\cdot) \coloneqq g + \uptrans{}(\cdot)$ as before.
To prove the statement, we will use an induction argument in $k \in \natz{}$.
That the statement holds for $k = 0$ is trivial because $\tilde{m}_{f,0} = f$ and $\tilde{m}_{g,0} = g$.
Now suppose that the statement holds for $k = i-1$ with $i \in \nats{}$.
Then, by assumption, we have that $\tilde{m}_{f,i-1} \indica{\mathcal{S}} = \tilde{m}_{g,i-1} \indica{\mathcal{S}}$.
This allows us to write that, for any $x \in \mathcal{S}$, 
\begin{align}\label{Eq: lemma: average does not depend on f outside S}
\tilde{m}_{f,i}(x) 
= \avuptrans{f}{} \tilde{m}_{f,i-1} (x)
= \avuptrans{f}{} (\tilde{m}_{f,i-1} \indica{\mathcal{S}}) (x)
&= \avuptrans{f}{} (\tilde{m}_{g,i-1} \indica{\mathcal{S}}) (x) \nonumber \\
&= \avuptrans{f}{} \tilde{m}_{g,i-1}(x), 
\end{align}
where the second and last step follow from Lemma~\ref{lemma: time average in maximal class only depends on value of f in maximal class}.
Moreover, note that, for any $x \in \mathcal{S}$,
\begin{align*}
\avuptrans{f}{} \tilde{m}_{g,i-1}(x)
= (f +\uptrans{} \tilde{m}_{g,i-1})(x)
= f(x) +\uptrans{} \tilde{m}_{g,i-1}(x)
&= g(x) +\uptrans{} \tilde{m}_{g,i-1}(x) \\
&= \avuptrans{g}{} \tilde{m}_{g,i-1}(x) \\
&= \tilde{m}_{g,i}(x)
\end{align*}
where the second step follows from $f \indica{\mathcal{S}} = g \indica{\mathcal{S}}$.
Hence, recalling Equation~\eqref{Eq: lemma: average does not depend on f outside S}, we have that $\tilde{m}_{f,i}(x) = \tilde{m}_{g,i}(x)$ for all $x \in \mathcal{S}$, which implies that $\tilde{m}_{f,i} \indica{\mathcal{S}} = \tilde{m}_{g,i} \indica{\mathcal{S}}$ or, equivalently, that the statement holds for $k=i$.
\qed
\end{proof}

\begin{proofof}{Proposition~\ref{proposition: if top class then eigenvector in top class}.}
Consider any maximal communication class $\mathcal{S}$ and any $x \in \mathcal{S}$.
It follows from Lemma~\ref{lemma: average does not depend on f outside S} that for any $k \in \natz{}$, 
\begin{align*}
\upprev{\mathrm{av},k}(f \vert x) 
= \tfrac{1}{k+1} \tilde{m}_{f,k}(x) 
= \tfrac{1}{k+1} \tilde{m}_{f \indica{\mathcal{S}},k}(x)
= \upprev{\mathrm{av},k}(f \indica{\mathcal{S}} \vert x).
\end{align*}
Moreover, by Lemma~\ref{lemma: if top class then eigenvector in top class}, the upper expectation $\upprev{\mathrm{av},k}(f \vert x)$ converges to a constant that does not depend on the specific state $x \in \mathcal{S}$.
\end{proofof}

\section{Proof of Proposition~\ref{proposition: if top class absorbing then eigenvector}}

\begin{lemma}\label{lemma: Rc decreases}
For any $\uptrans{}$ with top class $\mathcal{R}$, the function $\uptrans{}^k \indica{\mathcal{R}^c}$ is non-increasing in $k \in \nats{}$. 
\end{lemma}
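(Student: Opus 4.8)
The plan is to reduce the monotonicity-in-$k$ claim to a single one-step inequality and then propagate it. Since $\uptrans{}^{k+1} \indica{\mathcal{R}^c} = \uptrans{}^k(\uptrans{} \indica{\mathcal{R}^c})$, it suffices to first establish the base inequality $\uptrans{} \indica{\mathcal{R}^c} \leq \indica{\mathcal{R}^c}$ on all of $\statespace{}$, and then apply the monotone operator $\uptrans{}^k$ to both sides. So the whole argument rests on this single pointwise comparison.

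To establish $\uptrans{} \indica{\mathcal{R}^c} \leq \indica{\mathcal{R}^c}$, I would split on whether a state lies inside or outside the top class. For $x \in \mathcal{R}^c$ the inequality is immediate from boundedness [\ref{transcoherence: bounds}], since $\uptrans{} \indica{\mathcal{R}^c}(x) \leq \max \indica{\mathcal{R}^c} = 1 = \indica{\mathcal{R}^c}(x)$. For $x \in \mathcal{R}$ the key observation is that the top class $\mathcal{R}$ is in particular a maximal communication class, as recorded in Section~\ref{Sect: accessibility and topical maps}. Hence Lemma~\ref{lemma: not from S to Sc}, applied with $\mathcal{S} = \mathcal{R}$, yields $\uptrans{} \indica{\mathcal{R}^c}(x) = 0 = \indica{\mathcal{R}^c}(x)$ for every $x \in \mathcal{R}$ (only the case $k=1$ is needed here). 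Combining both cases gives $\uptrans{} \indica{\mathcal{R}^c} \leq \indica{\mathcal{R}^c}$.

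Finally, I would propagate. Applying the iterate $\uptrans{}^k$, which is monotone by \ref{iteratedcoherence: monotonicity}, to both sides of the base inequality gives $\uptrans{}^{k+1}\indica{\mathcal{R}^c} = \uptrans{}^k(\uptrans{}\indica{\mathcal{R}^c}) \leq \uptrans{}^k \indica{\mathcal{R}^c}$ for every $k \in \nats{}$, which is exactly the statement that $\uptrans{}^k \indica{\mathcal{R}^c}$ is non-increasing in $k$.

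There is no real obstacle here; the only substantive step is recognising that the top class is a maximal communication class, so that Lemma~\ref{lemma: not from S to Sc} applies and pins the value of $\uptrans{}\indica{\mathcal{R}^c}$ to $0$ on $\mathcal{R}$. Everything else is the monotone-propagation boilerplate that turns a single-step comparison into monotonicity along the whole trajectory (and the degenerate case $\mathcal{R}^c = \emptyset$ is trivial, as then $\indica{\mathcal{R}^c} = 0$).
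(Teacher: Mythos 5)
Your proof is correct and follows essentially the same route as the paper's: both establish the one-step inequality $\uptrans{}\indica{\mathcal{R}^c} \leq \indica{\mathcal{R}^c}$ by combining Lemma~\ref{lemma: not from S to Sc} (with $\mathcal{S}=\mathcal{R}$, using that a top class is a maximal communication class) with boundedness [\ref{transcoherence: bounds}], and then propagate it through the monotonicity [\ref{iteratedcoherence: monotonicity}] of the iterates of $\uptrans{}$. The only cosmetic difference is that you split the base inequality into the two cases $x\in\mathcal{R}$ and $x\in\mathcal{R}^c$ explicitly, whereas the paper phrases it as sandwiching $\uptrans{}\indica{\mathcal{R}^c}$ between $0$ and $1$; the substance is identical.
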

\begin{proof}
From Lemma~\ref{lemma: not from S to Sc} and the fact that a top class is always a maximal communication class, we infer that $\uptrans{} \indica{\mathcal{R}^c} (x) = 0$ for all $x \in \mathcal{R}$.
Since moreover $0 \leq \uptrans{} \indica{\mathcal{R}^c} \leq 1$ by \ref{transcoherence: bounds}, it follows that $\uptrans{} \indica{\mathcal{R}^c} \leq \indica{\mathcal{R}^c}$.
Then, using \ref{iteratedcoherence: monotonicity}, we deduce that 
$\uptrans{}^{k} \indica{\mathcal{R}^c} \leq \uptrans{}^{k-1} \indica{\mathcal{R}^c}$ for all $k \in \nats{}$.
\qed
\end{proof}

\begin{lemma}\label{lemma: limit of Rc converges to zero}
Consider any $\uptrans{}$ that satisfies (TCA) and let $\mathcal{R}$ be the corresponding top class.
Then we have that $\lim_{k \to +\infty} \uptrans{}^k \indica{\mathcal{R}^c} = 0$.
\end{lemma}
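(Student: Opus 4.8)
The plan is to exploit the monotone structure already established, identify the pointwise limit of $\uptrans{}^k \indica{\mathcal{R}^c}$ as a fixed point of $\uptrans{}$, and then rule out any nonzero limit by invoking the absorbing condition. First I would note that, for each $x \in \statespace{}$, the sequence $\uptrans{}^k \indica{\mathcal{R}^c}(x)$ is non-increasing in $k$ by Lemma~\ref{lemma: Rc decreases} and bounded below by $0$ by \ref{transcoherence: bounds}; it therefore converges to some $g(x) \geq 0$, and since $\statespace{}$ is finite the convergence is uniform. By Lemma~\ref{lemma: not from S to Sc} (a top class being a maximal communication class) we have $\uptrans{}^k \indica{\mathcal{R}^c}(x) = 0$ for every $x \in \mathcal{R}$, so $g$ vanishes on $\mathcal{R}$; combined with $g \leq 1$ (again \ref{transcoherence: bounds}) this yields $g \leq \indica{\mathcal{R}^c}$.

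The hard part, and the step I would spend most care on, is showing that $g$ is a fixed point, i.e.\ $\uptrans{} g = g$. The tool is that a coherent $\uptrans{}$ is non-expansive in the supremum norm: from \ref{transcoherence: mixed additivity} and \ref{transcoherence: bounds} one gets $\uptrans{} h - \uptrans{} h' \leq \uptrans{}(h - h') \leq \max(h - h') \leq \supnorm{h - h'}$, and by symmetry $\supnorm{\uptrans{} h - \uptrans{} h'} \leq \supnorm{h - h'}$, so $\uptrans{}$ is continuous. Applying $\uptrans{}$ to the uniformly convergent sequence $\uptrans{}^k \indica{\mathcal{R}^c} \to g$ gives $\uptrans{}^{k+1} \indica{\mathcal{R}^c} \to \uptrans{} g$; but the left-hand side also tends to $g$, whence $\uptrans{} g = g$ and therefore $\uptrans{}^k g = g$ for every $k \in \nats{}$.

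Finally I would argue by contradiction. Suppose $M \coloneqq \max g > 0$ and set $\hat g \coloneqq g / M$. By non-negative homogeneity \ref{transcoherence: homogeneity}, $\hat g$ is again a fixed point, with $\max \hat g = 1$ and $\hat g \leq \indica{\mathcal{R}^c}$ (since $g \leq M$ everywhere and $g = 0$ on $\mathcal{R}$). Choose $x^\ast$ with $\hat g(x^\ast) = 1$; as $g$ vanishes on $\mathcal{R}$, necessarily $x^\ast \in \mathcal{R}^c$. The absorbing condition (TCA), in the form of E2, then supplies some $k \in \nats{}$ with $\uptrans{}^k \indica{\mathcal{R}^c}(x^\ast) < 1$. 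Using $\hat g \leq \indica{\mathcal{R}^c}$ and monotonicity \ref{iteratedcoherence: monotonicity}, I obtain $1 = \hat g(x^\ast) = \uptrans{}^k \hat g(x^\ast) \leq \uptrans{}^k \indica{\mathcal{R}^c}(x^\ast) < 1$, a contradiction. Hence $M = 0$, that is $g = 0$, which is the claim.

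Besides the fixed-point identity, the one subtlety worth flagging is the normalization $\hat g = g/M$: it is essential because E2 only delivers an estimate strictly below $1$, so the final contradiction must be run at the scale $\max \hat g = 1$ rather than directly on $g$. Everything else reduces to the monotone-convergence bookkeeping and the coherence properties collected in \ref{iteratedcoherence: bounds}--\ref{iteratedcoherence: mixed additivity}.
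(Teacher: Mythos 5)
Your proof is correct, but it takes a genuinely different route from the paper. The paper argues by geometric contraction: it aggregates the (TCA) condition over all of $\mathcal{R}^c$ to find a single $k$ and an $\alpha \coloneqq \max_{x \in \mathcal{R}^c} \uptrans{}^{k} \indica{\mathcal{R}^c}(x) < 1$ with $\uptrans{}^{k}\indica{\mathcal{R}^c} \leq \alpha \indica{\mathcal{R}^c}$, and then iterates this bound via \ref{transcoherence: monotonicity} and \ref{transcoherence: homogeneity} to get $\uptrans{}^{\ell k}\indica{\mathcal{R}^c} \leq \alpha^{\ell}\indica{\mathcal{R}^c} \to 0$, finishing with Lemma~\ref{lemma: Rc decreases}. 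You instead identify the monotone limit $g$ as a fixed point of $\uptrans{}$ (using non-expansiveness in the supremum norm, a fact consistent with the paper's own toolkit, cf.\ its use of \cite[Proposition~1.1]{GUNAWARDENA2003141}) and then rule out $\max g > 0$ by a maximum-principle contradiction at a maximizer $x^\ast$, invoking (TCA) only at that single state. Each approach buys something: the paper's argument is more elementary (no continuity needed) and yields a quantitative, exponential rate of convergence; yours is conceptually cleaner, needs (TCA) only pointwise at the maximizer rather than uniformly over $\mathcal{R}^c$, and in passing establishes the stronger structural fact that $0$ is the only non-negative fixed point of $\uptrans{}$ dominated by $\indica{\mathcal{R}^c}$. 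One small quibble: the normalization $\hat g = g/M$ that you flag as essential is merely cosmetic --- by \ref{transcoherence: homogeneity} you could run the same contradiction directly at scale $M$, since $g \leq M\indica{\mathcal{R}^c}$ gives $M = \uptrans{}^{k} g(x^\ast) \leq M\,\uptrans{}^{k}\indica{\mathcal{R}^c}(x^\ast) < M$.
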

\begin{proof}
The statement holds if $\mathcal{R}^c = \emptyset$ because then $\indica{\mathcal{R}^c} = 0$, which by \ref{iteratedcoherence: bounds} implies that $\uptrans{}^k \indica{\mathcal{R}^c} = 0$ for all $k \in \natz{}$.
So assume that $\mathcal{R}^c$ is non-empty.
Then, since $\uptrans{}$ is top class absorbing, we know that for any $x \in \mathcal{R}^c$, there is an index $k_x \in \nats{}$ such that $\uptrans{}^{k_x} \indica{\mathcal{R}^c} (x) < 1$.
By Lemma~\ref{lemma: Rc decreases}, it follows that $\uptrans{}^{k} \indica{\mathcal{R}^c} (x) < 1$ for all $x \in \mathcal{R}^c$ and all $k \geq k_x$.
Hence, for $k \coloneqq \max_{x \in \mathcal{R}^c} k_x \in \nats{}$, we have that $\uptrans{}^{k} \indica{\mathcal{R}^c} (x) < 1$ for all $x \in \mathcal{R}^c$.
Now let $\alpha \coloneqq \max_{x \in \mathcal{R}^c} \uptrans{}^{k} \indica{\mathcal{R}^c} (x) < 1$.
The set $\mathcal{R}$ is a top class and therefore a maximal communication class, so it follows from Lemma~\ref{lemma: not from S to Sc} that $\uptrans{}^{k} \indica{\mathcal{R}^c}(x) = 0$ for all $x \in \mathcal{R}$.
Since $\uptrans{}^{k} \indica{\mathcal{R}^c}(x) \leq \alpha$ for all $x \in \mathcal{R}^c$, this implies that $\uptrans{}^{k} \indica{\mathcal{R}^c} \leq \alpha \indica{\mathcal{R}^c}$.
Using \ref{iteratedcoherence: monotonicity}, \ref{iteratedcoherence: homogeneity} and the non-negativity [\ref{iteratedcoherence: bounds}] of $\alpha$, it follows that $\uptrans{}^{2 k} \indica{\mathcal{R}^c} \leq \alpha^2 \indica{\mathcal{R}^c}$.
Repeating this argument leads us to conclude that $\uptrans{}^{\ell k} \indica{\mathcal{R}^c} \leq \alpha^\ell \indica{\mathcal{R}^c}$ for all $\ell \in \nats{}$.
Since $\alpha$ is a non-negative real such that $\alpha < 1$, 
this implies that $\lim_{\ell \to +\infty} \uptrans{}^{\ell k} \indica{\mathcal{R}^c} \leq 0$ and therefore by \ref{iteratedcoherence: bounds} that $\lim_{\ell \to +\infty} \uptrans{}^{\ell k} \indica{\mathcal{R}^c} = 0$.
Then it also follows that $\lim_{k \to +\infty} \uptrans{}^{k} \indica{\mathcal{R}^c} = 0$ because $\uptrans{}^{k} \indica{\mathcal{R}^c}$ is non-increasing [Lemma~\ref{lemma: Rc decreases}].
\qed
\end{proof}

\begin{lemma}\label{lemma: absorbing implies that T^k h only depends on h in R}
Consider any upper transition operator $\uptrans{}$ that satisfies (TCA) and let $\mathcal{R}$ be the corresponding top class.
Then, for any $\epsilon > 0$, there is a $k_1 \in \natz{}$ such that $\supnorm{\uptrans{}^k h -  \uptrans{}^k (h \indica{\mathcal{R}})} \leq \supnorm{h} \epsilon$ for all $k \geq k_1$ and all $h \in \setofgambles{}(\statespace{})$.
\end{lemma}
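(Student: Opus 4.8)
The plan is to reduce this uniform (over all $h$) estimate to the single scalar convergence $\uptrans{}^k\indica{\mathcal{R}^c}\to 0$ that is already established, under (TCA), in Lemma~\ref{lemma: limit of Rc converges to zero}. The starting observation is simply that $h - h\indica{\mathcal{R}} = h\indica{\mathcal{R}^c}$, so $\uptrans{}^k h$ and $\uptrans{}^k(h\indica{\mathcal{R}})$ can only differ through the part of $h$ supported on $\mathcal{R}^c$, whose influence should vanish as the top class becomes practically certain.

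First I would bound the difference pointwise in both directions using mixed sub-additivity~[\ref{iteratedcoherence: mixed additivity}]. On the one hand $\uptrans{}^k h - \uptrans{}^k(h\indica{\mathcal{R}}) \leq \uptrans{}^k(h - h\indica{\mathcal{R}}) = \uptrans{}^k(h\indica{\mathcal{R}^c})$, and on the other hand $\uptrans{}^k(h\indica{\mathcal{R}}) - \uptrans{}^k h \leq \uptrans{}^k(h\indica{\mathcal{R}} - h) = \uptrans{}^k(-h\indica{\mathcal{R}^c})$. Since both $h\indica{\mathcal{R}^c}$ and $-h\indica{\mathcal{R}^c}$ are bounded above by $\supnorm{h}\indica{\mathcal{R}^c}$, monotonicity~[\ref{iteratedcoherence: monotonicity}] followed by non-negative homogeneity~[\ref{iteratedcoherence: homogeneity}] turns each right-hand side into $\supnorm{h}\,\uptrans{}^k\indica{\mathcal{R}^c}$. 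Combining the two inequalities gives the pointwise estimate $\vert \uptrans{}^k h(x) - \uptrans{}^k(h\indica{\mathcal{R}})(x)\vert \leq \supnorm{h}\,\uptrans{}^k\indica{\mathcal{R}^c}(x)$ for every $x \in \statespace{}$, and hence $\supnorm{\uptrans{}^k h - \uptrans{}^k(h\indica{\mathcal{R}})} \leq \supnorm{h}\,\supnorm{\uptrans{}^k\indica{\mathcal{R}^c}}$.

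Finally I would invoke Lemma~\ref{lemma: limit of Rc converges to zero} to get $\uptrans{}^k\indica{\mathcal{R}^c}\to 0$, so that $\supnorm{\uptrans{}^k\indica{\mathcal{R}^c}}\to 0$. Given $\epsilon>0$, choose $k_1 \in \natz{}$ so that $\supnorm{\uptrans{}^k\indica{\mathcal{R}^c}}\leq\epsilon$ for all $k\geq k_1$; the displayed estimate then yields $\supnorm{\uptrans{}^k h - \uptrans{}^k(h\indica{\mathcal{R}})}\leq\supnorm{h}\,\epsilon$ for all such $k$ and all $h$, as required.

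The argument is essentially routine once the decomposition $h = h\indica{\mathcal{R}} + h\indica{\mathcal{R}^c}$ is in place, so the only genuinely delicate point, and the main obstacle, is securing the bound \emph{uniformly} over all $h\in\setofgambles{}(\statespace{})$ with a single $k_1$. This is exactly what the homogeneity step buys us: pulling $\supnorm{h}$ out collapses the whole family of trajectories onto the one scalar sequence $\supnorm{\uptrans{}^k\indica{\mathcal{R}^c}}$, so one threshold works for every $h$ at once. It is also worth flagging that mere topicality would not be enough here, since the two-sided estimate rests on mixed sub-additivity, which in turn relies on the sub-additivity and homogeneity of full coherence~[\ref{iteratedcoherence: subadditivity},\ref{iteratedcoherence: homogeneity}].
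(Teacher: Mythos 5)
Your proof is correct and follows essentially the same route as the paper's: both reduce the claim to the convergence $\uptrans{}^k \indica{\mathcal{R}^c} \to 0$ from Lemma~\ref{lemma: limit of Rc converges to zero}, and both use mixed sub-additivity [\ref{iteratedcoherence: mixed additivity}], monotonicity [\ref{iteratedcoherence: monotonicity}] and non-negative homogeneity [\ref{iteratedcoherence: homogeneity}] to bound $\supnorm{\uptrans{}^k h - \uptrans{}^k(h\indica{\mathcal{R}})}$ by $\supnorm{h}\,\supnorm{\uptrans{}^k\indica{\mathcal{R}^c}}$, with homogeneity delivering the uniformity in $h$. The only cosmetic difference is that the paper sandwiches $h$ between $h\indica{\mathcal{R}} \pm \supnorm{h}\indica{\mathcal{R}^c}$ before applying $\uptrans{}^k$, whereas you apply mixed sub-additivity to the difference directly; the two manipulations are equivalent.
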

\begin{proof}
Fix any $\epsilon > 0$.
Because of Lemma~\ref{lemma: limit of Rc converges to zero}, we have that $\lim_{k \to +\infty} \uptrans{}^k \indica{\mathcal{R}^c} = 0$.
Since $\statespace{}$ is finite, this implies that there is an index $k_1 \in \natz{}$ such that $0 \leq \uptrans{}^k \indica{\mathcal{R}^c} \leq \epsilon$ for all $k \geq k_1$, where we also used the non-negativity [\ref{iteratedcoherence: bounds}] of $\uptrans{}^k \indica{\mathcal{R}^c}$.
Hence, multiplying by $\supnorm{h}$ for any $h \in \setofgambles{}(\statespace{})$ and using non-negative homogeneity [\ref{iteratedcoherence: homogeneity}], allows us to write that $0 \leq \uptrans{}^k (\supnorm{h} \indica{\mathcal{R}^c}) \leq \supnorm{h} \epsilon$ for all $k \geq k_1$ and all $h \in \setofgambles{}(\statespace{})$.
Moreover,
\begin{align*}
h \indica{\mathcal{R}} - \supnorm{h} \indica{\mathcal{R}^c} \leq h \leq h \indica{\mathcal{R}} + \supnorm{h} \indica{\mathcal{R}^c} \text{ for all } h \in \setofgambles{}(\statespace{}).
\end{align*}
Then, by subsequently applying \ref{iteratedcoherence: mixed additivity}, \ref{iteratedcoherence: monotonicity} and \ref{iteratedcoherence: subadditivity}, we find that 
\begin{align}\label{Eq: proposition: if top class absorbing then eigenvector 2}
\uptrans{}^k (h \indica{\mathcal{R}}) - \uptrans{}^k (\supnorm{h} \indica{\mathcal{R}^c}) 
&\leq \uptrans{}^k (h \indica{\mathcal{R}} - \supnorm{h} \indica{\mathcal{R}^c}) \nonumber \\
&\leq \uptrans{}^k h 
\leq \uptrans{}^k (h \indica{\mathcal{R}}) + \uptrans{}^k (\supnorm{h} \indica{\mathcal{R}^c}),
\end{align}
for all $h \in \setofgambles{}(\statespace{})$ and all $k \in \natz{}$.
Hence, recalling that $0 \leq \uptrans{}^k (\supnorm{h} \indica{\mathcal{R}^c}) \leq \supnorm{h} \epsilon$, we indeed find that $\supnorm{\uptrans{}^k h - \uptrans{}^k (h \indica{\mathcal{R}})} \leq \supnorm{h} \epsilon$ for all $h \in \setofgambles{}(\statespace{})$ and all $k \geq k_1$.
\qed
\end{proof}

\begin{lemma}\label{lemma: bounds average}
$\inf f \leq \upprev{\mathrm{av},k}(f \vert x) \leq \sup f$ for all $k \in \natz{}$ and all $x \in \statespace{}$.
\end{lemma}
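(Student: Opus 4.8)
The plan is to establish the un-normalised bound $(k+1)\inf f \leq \tilde m_{f,k}(x) \leq (k+1)\sup f$ for all $x \in \statespace{}$ and all $k \in \natz{}$, and then to recover the claim by dividing through by $k+1$ and invoking the identity $\upprev{\mathrm{av},k}(f \vert x) = \tfrac{1}{k+1}\tilde m_{f,k}(x)$ recalled in Section~\ref{section: trans operators and ergodicity}. Since $\statespace{}$ is finite, $\inf f = \min f$ and $\sup f = \max f$, so these are exactly the values appearing in the boundedness axiom \ref{transcoherence: bounds}.

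I would prove the un-normalised bound by induction on $k$. The base case $k = 0$ is immediate from $\tilde m_{f,0} = f$, which gives $\inf f \leq f(x) \leq \sup f$. For the inductive step, assuming the bound at $k-1$, I would apply the recursion \eqref{Eq: recursive expression}, $\tilde m_{f,k} = f + \uptrans{}\tilde m_{f,k-1}$, together with boundedness \ref{transcoherence: bounds}, which gives $\min \tilde m_{f,k-1} \leq \uptrans{}\tilde m_{f,k-1}(x) \leq \max \tilde m_{f,k-1}$ for every $x \in \statespace{}$. The induction hypothesis yields $\min \tilde m_{f,k-1} \geq k \inf f$ and $\max \tilde m_{f,k-1} \leq k \sup f$, so adding the pointwise bound $\inf f \leq f(x) \leq \sup f$ produces $(k+1)\inf f \leq \tilde m_{f,k}(x) \leq (k+1)\sup f$, closing the induction.

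There is no genuine obstacle here: the whole argument is an elementary monotone induction resting solely on the boundedness property of a coherent upper transition operator. Intuitively, each application of $\uptrans{}$ cannot push a function above its own maximum or below its own minimum, so the running sum defining $\tilde m_{f,k}$ accumulates $k+1$ contributions each lying in $[\inf f, \sup f]$. The same conclusion could equally be read off by unrolling \eqref{Eq: recursive expression 2} and applying boundedness at each of the $k+1$ nested applications of $\uptrans{}$, but the induction is the cleanest way to record it.
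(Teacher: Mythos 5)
Your proposal is correct and takes essentially the same approach as the paper: an induction on $k$ establishing the un-normalised bound $(k+1)\inf f \leq \tilde{m}_{f,k} \leq (k+1)\sup f$ via the recursion \eqref{Eq: recursive expression}, followed by division by $k+1$. The only cosmetic difference is that the paper's inductive step invokes monotonicity [\ref{transcoherence: monotonicity}] together with boundedness [\ref{transcoherence: bounds}] applied to the constants $i\inf f$ and $i\sup f$, whereas you apply boundedness directly to $\tilde{m}_{f,k-1}$ and then bound its minimum and maximum by the induction hypothesis; the two steps are interchangeable.
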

\begin{proof}
It clearly suffices to prove that $(k+1) \inf f \leq \tilde{m}_{f,k} \leq (k+1) \sup f$ for all $k \in \natz{}$.
We do this by induction.
For $k=0$, the statement holds trivially because $\tilde{m}_{f,0} = f$ and therefore $\inf f \leq \tilde{m}_{f,0} \leq \sup f$.
Now suppose that the statement holds for $k = i-1$ with $i \in \nats{}$.
Then $i \inf f \leq \tilde{m}_{f,i-1} \leq i \sup f$.
It then follows from \ref{transcoherence: bounds} and \ref{transcoherence: monotonicity} that 
\begin{align*}
i \inf f = \uptrans{}(i \inf f) 
\leq \uptrans{} \tilde{m}_{f,i-1} 
\leq \uptrans{}(i \sup f)
= i \sup f.
\end{align*}
By adding $f$ to all the terms, we find that 
$(i+1) \inf f 
\leq f + \uptrans{} \tilde{m}_{f,i-1}  
\leq (i+1) \sup f$.
Since $f + \uptrans{} \tilde{m}_{f,i-1} = \avuptrans{f}{} \tilde{m}_{f,i-1}
= \tilde{m}_{f,i}$ by Equation~\eqref{Eq: recursive expression}, it follows that the statement holds for $k=i$ as well.
\qed
\end{proof}
For notational convenience, we will henceforth use $\overline{m}_{f,k} \coloneqq \tfrac{1}{k+1} \tilde{m}_{f,k}$ for any $k \in \natz{}$ to denote the function in $\setofgambles{}(\statespace{})$ that takes the value $\overline{m}_{f,k}(x) = \tfrac{1}{k+1} \tilde{m}_{f,k}(x) = \upprev{\mathrm{av},k}(f \vert x)$ in $x \in \statespace{}$. 

\begin{lemma}\label{lemma: T^kh - T_f^k h}
$\supnorm{\uptrans{}^k h - \avuptrans{f}{k} h } \leq k \supnorm{f}$ for all $h \in \setofgambles{}(\statespace{})$ and all $k \in \natz{}$.
\end{lemma}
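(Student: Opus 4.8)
The plan is to argue by induction on $k$, with the essential auxiliary fact being that $\uptrans{}$ is \emph{non-expansive} with respect to the supremum norm. I would first establish this non-expansiveness. For any $u,v \in \setofgambles{}(\statespace{})$, mixed sub-additivity~\ref{transcoherence: mixed additivity} gives $\uptrans{}u - \uptrans{}v \leq \uptrans{}(u-v)$, and boundedness~\ref{transcoherence: bounds} gives $\uptrans{}(u-v) \leq \max(u-v) \leq \supnorm{u-v}$. Interchanging the roles of $u$ and $v$ yields $\uptrans{}v - \uptrans{}u \leq \supnorm{u-v}$ in the same way, so that $\supnorm{\uptrans{}u - \uptrans{}v} \leq \supnorm{u-v}$ for all $u,v$.

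With this in hand, the induction is straightforward. For the base case $k=0$, both $\uptrans{}^0 h$ and $\avuptrans{f}{0} h$ equal $h$, so the left-hand side vanishes and the bound holds trivially. For the inductive step, I would assume the claim for some $k \in \natz{}$ and then, using $\avuptrans{f}{} g = f + \uptrans{} g$, rewrite
\[
\uptrans{}^{k+1} h - \avuptrans{f}{k+1} h
= \uptrans{}(\uptrans{}^k h) - \bigl(f + \uptrans{}(\avuptrans{f}{k} h)\bigr)
= \bigl(\uptrans{}(\uptrans{}^k h) - \uptrans{}(\avuptrans{f}{k} h)\bigr) - f .
\]
Applying the triangle inequality gives $\supnorm{\uptrans{}^{k+1} h - \avuptrans{f}{k+1} h} \leq \supnorm{\uptrans{}(\uptrans{}^k h) - \uptrans{}(\avuptrans{f}{k} h)} + \supnorm{f}$. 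Non-expansiveness bounds the first term by $\supnorm{\uptrans{}^k h - \avuptrans{f}{k} h}$, which by the induction hypothesis is at most $k\supnorm{f}$. Hence the whole expression is at most $(k+1)\supnorm{f}$, which closes the induction.

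There is no serious obstacle here; the only point requiring a moment's care is conceptual rather than computational. Since $\uptrans{}$ is genuinely nonlinear, one cannot simply expand $\avuptrans{f}{k} h$ into $\uptrans{}^k h$ plus an additive accumulation of images of $f$. The clean way around this is precisely the inductive decomposition above, which isolates the single extra copy of $f$ introduced at each iteration and then uses non-expansiveness to control how the discrepancy accumulated in the previous $k$ steps propagates through one further application of $\uptrans{}$. Establishing the non-expansiveness estimate from \ref{transcoherence: mixed additivity} and \ref{transcoherence: bounds} is therefore the key step on which everything else rests.
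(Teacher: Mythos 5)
Your proof is correct. It takes the same inductive skeleton as the paper's proof---peel off one application, isolate the single extra copy of $f$, and show that the discrepancy accumulated so far passes through one more application of $\uptrans{}$ without growing---but the two proofs package that last step differently. The paper never forms the norm of a difference: it proves the two-sided pointwise sandwich $-k\supnorm{f} + \uptrans{}^k h \leq \avuptrans{f}{k} h \leq k\supnorm{f} + \uptrans{}^k h$ by induction, applying the monotonicity and constant additivity of the topical map $\avuptrans{f}{}$ directly to the induction hypothesis (and using the $k=1$ case again to replace $\avuptrans{f}{}(\uptrans{}^i h)$ by $\supnorm{f} + \uptrans{}^{i+1}h$). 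You instead first establish that $\uptrans{}$ is non-expansive in the supremum norm (from mixed sub-additivity and boundedness, which is a valid derivation) and then run the induction on $\supnorm{\uptrans{}^k h - \avuptrans{f}{k}h}$ via the triangle inequality. The two mechanisms are equivalent in substance---non-expansiveness of a topical map is exactly what monotonicity plus constant additivity yield---but your route is more modular: non-expansiveness is a standard fact that the paper itself invokes later (citing Gunawardena's Proposition~1.1) in the proof of Proposition~\ref{proposition: if top class absorbing then eigenvector}, so you could simply have cited it there rather than rederiving it. What the paper's version buys in exchange is a marginally stronger conclusion (the pointwise two-sided bound rather than just its norm consequence) and a proof that stays entirely within order-theoretic manipulations, though that extra strength is never used.
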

\begin{proof}
We will only prove the statement for $k \in \nats{}$ since it clearly holds for $k = 0$.
To that end, it suffices to show that 
\begin{align}\label{Eq: lemma: average is equal to T^k average}
- k \supnorm{f} + \uptrans{}^k h \leq \avuptrans{f}{k} h \leq k \supnorm{f} + \uptrans{}^k h \text{ for all } h \in \setofgambles{}(\statespace{}) \text{ and all } k \in \nats{}.
\end{align}
We will use an induction argument in $k$.
It should be clear from the definition of $\avuptrans{f}{}$ that these inequalities hold for all $h \in \setofgambles{}(\statespace{})$ and $k = 1$.
Now suppose that they hold for all $h \in \setofgambles{}(\statespace{})$ and all $k \in \{1,\cdots,i\}$, with $i \in \nats{}$.
Then we have that 
\begin{align*}
\avuptrans{f}{i+1} h 
\leq \avuptrans{f}{}\big( i \supnorm{f} + \uptrans{}^i h \big)
= i \supnorm{f} + \avuptrans{f}{}\big(  \uptrans{}^i h \big)
\leq (i+1) \supnorm{f} +  \uptrans{}^{(i+1)} h,
\end{align*}
for all $h \in \setofgambles{}(\statespace{})$, where the first step follows from the induction hypothesis for $k=i$ and the monotonicity [\ref{topical: monotonicity}] of $\avuptrans{f}{}$, the second from the constant additivity [\ref{topical: constant addivity}] of $\smash{\avuptrans{f}{}}$, and the third from the induction hypothesis for $k=1$.
In an analogous way, we find that
\begin{align*}
\avuptrans{f}{i+1} h 
\geq \avuptrans{f}{}\big( - i \supnorm{f} + \uptrans{}^i h \big)
&= - i \supnorm{f} + \avuptrans{f}{}\big(  \uptrans{}^i h \big) \\
&\geq - (i+1) \supnorm{f} +  \uptrans{}^{(i+1)} h,
\end{align*}
for all $h \in \setofgambles{}(\statespace{})$, where the first step follows once more from the induction hypothesis for $k=i$ and the monotonicity [\ref{topical: monotonicity}] of $\avuptrans{f}{}$, the second from the constant additivity [\ref{topical: constant addivity}] of $\avuptrans{f}{}$, and the third from the induction hypothesis for $k=1$.
Both inequalities together establish that the statement holds for $k=i+1$, thereby concluding the induction step.
\qed
\end{proof}

\begin{lemma}\label{lemma: average is equal to T^k average}
$\lim_{k \to +\infty} \supnorm{\uptrans{}^{\ell} \overline{m}_{f,k} - \overline{m}_{f,k+\ell}} = 0$ \text{ for all } $\ell \in \natz{}$.
\end{lemma}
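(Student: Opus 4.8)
The plan is to compare $\uptrans{}^{\ell}\overline{m}_{f,k}$ and $\overline{m}_{f,k+\ell}$ by routing everything through the map $\avuptrans{f}{\ell}$ and then invoking Lemma~\ref{lemma: T^kh - T_f^k h}, which already quantifies how far iterating $\uptrans{}$ is from iterating $\avuptrans{f}{}$. The starting observation is that the recursion~\eqref{Eq: recursive expression} yields $\tilde{m}_{f,k+\ell} = \avuptrans{f}{\ell}\tilde{m}_{f,k}$, so that $\overline{m}_{f,k+\ell} = \tfrac{1}{k+\ell+1}\,\avuptrans{f}{\ell}\tilde{m}_{f,k}$, whereas non-negative homogeneity~[\ref{iteratedcoherence: homogeneity}] gives $\uptrans{}^{\ell}\overline{m}_{f,k} = \tfrac{1}{k+1}\,\uptrans{}^{\ell}\tilde{m}_{f,k}$. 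Thus both objects are scalar multiples of $\uptrans{}^{\ell}\tilde{m}_{f,k}$ and $\avuptrans{f}{\ell}\tilde{m}_{f,k}$ respectively, and the whole problem reduces to controlling, on the one hand, the gap between these two iterated images and, on the other hand, the gap between the two averaging factors $\tfrac{1}{k+1}$ and $\tfrac{1}{k+\ell+1}$.

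Writing $A \coloneqq \uptrans{}^{\ell}\tilde{m}_{f,k}$ and $B \coloneqq \avuptrans{f}{\ell}\tilde{m}_{f,k} = \tilde{m}_{f,k+\ell}$, the key step is the decomposition
\begin{align*}
\uptrans{}^{\ell}\overline{m}_{f,k} - \overline{m}_{f,k+\ell}
= \frac{A}{k+1} - \frac{B}{k+\ell+1}
= \Big(\frac{1}{k+1} - \frac{1}{k+\ell+1}\Big) A + \frac{A - B}{k+\ell+1},
\end{align*}
which splits the error into a \emph{normalisation mismatch} (the first summand) and an \emph{operator mismatch} (the second summand).

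I would then bound the two summands separately. The operator mismatch is immediate: Lemma~\ref{lemma: T^kh - T_f^k h} gives $\supnorm{A - B} \leq \ell\supnorm{f}$, hence $\supnorm{(A-B)/(k+\ell+1)} \leq \ell\supnorm{f}/(k+\ell+1)$. For the normalisation mismatch I would use Lemma~\ref{lemma: bounds average} --- equivalently the boundedness~[\ref{iteratedcoherence: bounds}] of $\uptrans{}^{\ell}$ --- to get $\supnorm{A} \leq \supnorm{\tilde{m}_{f,k}} \leq (k+1)\supnorm{f}$; combined with the identity $\tfrac{1}{k+1} - \tfrac{1}{k+\ell+1} = \ell/[(k+1)(k+\ell+1)]$, the factor $k+1$ cancels and this summand is also bounded by $\ell\supnorm{f}/(k+\ell+1)$. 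Adding the two bounds via the triangle inequality gives $\supnorm{\uptrans{}^{\ell}\overline{m}_{f,k} - \overline{m}_{f,k+\ell}} \leq 2\ell\supnorm{f}/(k+\ell+1)$, which vanishes as $k \to +\infty$ for each fixed $\ell$.

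The one place demanding care --- and the step I would treat as the main obstacle --- is the normalisation term: $A$ itself grows linearly in $k$, its supremum norm being of order $(k+1)\supnorm{f}$, so the argument only works because the difference of averaging factors decays like $\ell/k^{2}$, leaving a product of order $\ell/k$. Recognising that this quadratic decay exactly absorbs the linear growth of $\smash{\tilde{m}_{f,k}}$ is the crux; once it is in place, the remainder is a routine combination of the coherence properties and the earlier lemmas.
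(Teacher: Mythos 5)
Your proposal is correct and takes essentially the same approach as the paper's proof: both hinge on Lemma~\ref{lemma: T^kh - T_f^k h} to control the mismatch between $\uptrans{}^{\ell}\tilde{m}_{f,k}$ and $\avuptrans{f}{\ell}\tilde{m}_{f,k} = \tilde{m}_{f,k+\ell}$, use Lemma~\ref{lemma: bounds average} to absorb the discrepancy between the normalising factors $\tfrac{1}{k+1}$ and $\tfrac{1}{k+\ell+1}$, and combine the two pieces with the triangle inequality. The only differences are cosmetic: the paper routes through the intermediate term $\tfrac{1}{k+1}\avuptrans{f}{\ell}\tilde{m}_{f,k}$ while you route through $\tfrac{1}{k+\ell+1}\uptrans{}^{\ell}\tilde{m}_{f,k}$, and you state the explicit rate $2\ell\supnorm{f}/(k+\ell+1)$ where the paper uses an $\epsilon$-formulation.
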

\begin{proof}
Fix any $\ell \in \natz{}$ and any $\epsilon>0$.
Let $k \in \natz{}$ be such that $k+1 \geq \ell \supnorm{f} / \epsilon$ and let $h \coloneqq \avuptrans{f}{k+1}(0) = (k+1) \overline{m}_{f,k}$.
Then
\begin{align}\label{Eq: lemma: average is equal to T^k average 2}
\epsilon 
\geq \tfrac{\ell}{k+1} \supnorm{f} 
\geq \tfrac{1}{k+1} \supnorm{\uptrans{}^\ell h - \avuptrans{f}{\ell} h }
&=  \supnorm{\uptrans{}^\ell \tfrac{1}{k+1} h - \tfrac{1}{k+1} \avuptrans{f}{\ell} h } \nonumber \\
&= \supnorm{\uptrans{}^\ell \overline{m}_{f,k} - \tfrac{1}{k+1} \avuptrans{f}{\ell} h },
\end{align}
where the second step follows from Lemma~\ref{lemma: T^kh - T_f^k h} and the third follows from the non-negative homogeneity [\ref{iteratedcoherence: homogeneity}] of $\uptrans{}^\ell$.
Moreover, we also have that
\begin{align*}
\supnorm{\tfrac{1}{k+1} \avuptrans{f}{\ell} h - \overline{m}_{f,k+\ell}}
= \supnorm{\tfrac{k+\ell+1}{k+1} \, \overline{m}_{f,k+\ell} - \overline{m}_{f,k+\ell}}
&= \tfrac{\ell}{k+1} \supnorm{\overline{m}_{f,k+\ell}} \\
&\leq \tfrac{\ell}{k+1} \supnorm{f} \leq \epsilon,
\end{align*}
where the second to last step follows from Lemma~\ref{lemma: bounds average}.
Combining this with Equation~\eqref{Eq: lemma: average is equal to T^k average 2} and using the triangle inequality, we get that
$\supnorm{\uptrans{}^\ell \overline{m}_{f,k} - \overline{m}_{f,k+\ell}}
\leq 2 \epsilon$.
Since this holds for any $\epsilon > 0$ and all $k + 1 \geq \ell \supnorm{f} / \epsilon $, we indeed have that $\lim_{k \to +\infty} \supnorm{\uptrans{}^\ell \overline{m}_{f,k} - \overline{m}_{f,k+\ell}} = 0$.
\qed
\end{proof}
The next proof uses the fact that any topical map $F \colon \setofgambles{}(\statespace{}) \to \setofgambles{}(\statespace{})$ is \emph{non-expansive} with respect to the supremum norm \cite[Proposition~1.1]{GUNAWARDENA2003141}, which means that:
\begin{align*}
\supnorm{Fh - Fg} \leq \supnorm{h-g} \text{ for all } h,g \in \setofgambles{}(\statespace{}).
\end{align*}

\begin{proofof}{Proposition~\ref{proposition: if top class absorbing then eigenvector}.}
Assume that $\uptrans{}$ satisfies (TCA) and let $\mathcal{R}$ be the corresponding top class.
We show that $\upprev{\mathrm{av},k} (f \vert x)$ converges to a constant that does not depend on $x \in \statespace{}$.
This is clearly the case if $f = 0$ because Lemma~\ref{lemma: bounds average} then implies that $\upprev{\mathrm{av},k} (f \vert x) = 0$ for all $x \in \statespace{}$ and $k \in \natz{}$.
So suppose that $f \not= 0$, fix any $\epsilon > 0$ and let $\epsilon_1 \coloneqq (\sfrac{1}{\supnorm{f}}) \epsilon$.
Choose $\ell_1$ such that Lemma~\ref{lemma: absorbing implies that T^k h only depends on h in R} holds with $\epsilon_1$.
Then, for any $\ell \geq \ell_1$, we have that
\begin{align*}
\supnorm{\uptrans{}^\ell (\overline{m}_{f,k} \indica{\mathcal{R}}) - \uptrans{}^\ell \overline{m}_{f,k}} 
\leq \epsilon_1 \supnorm{\overline{m}_{f,k}} 
\leq \epsilon_1 \supnorm{f}
= \epsilon \text{ for all } k \in \natz{},
\end{align*}
where the second inequality follows from Lemma~\ref{lemma: bounds average}.
Fix an $\ell \in \nats{}$ such that $\ell \geq \ell_1$.
Now, recall Lemma~\ref{lemma: average is equal to T^k average}, which guarantees that there is some $k_1 \in \natz{}$ such that $\supnorm{\uptrans{}^\ell \overline{m}_{f,k} - \overline{m}_{f,k+\ell}} \leq \epsilon$ for all $k \geq k_1$.
Combining this with the inequality above, we get that
\begin{align}\label{Eq: proposition: if top class absorbing then eigenvector 3}
\supnorm{\uptrans{}^\ell (\overline{m}_{f,k} \indica{\mathcal{R}}) - \overline{m}_{f,k+\ell}} \leq 2\epsilon \text{ for all } k \geq k_1.
\end{align}

The top class $\mathcal{R}$ is a maximal communication class, so Proposition~\ref{proposition: if top class then eigenvector in top class} guarantees that there is some $\mu \in \reals{}$ such that $\lim_{k \to +\infty} \overline{m}_{f,k}(x) = \mu$ for all $x \in \mathcal{R}$.
Then, since $\statespace{}$ is finite, there is some $k_2 \in \natz{}$ such that $\vert \overline{m}_{f,k}(x) - \mu \vert \leq \epsilon$ for all $k \geq k_2$ and all $x \in \mathcal{R}$.
Alternatively, we can also write that $\supnorm{\overline{m}_{f,k} \indica{\mathcal{R}} - \mu \indica{\mathcal{R}}} \leq \epsilon$ for all $k \geq k_2$.
The map $\uptrans{}$ is topical, implying that it is non-expansive and therefore that
\begin{align*}
\supnorm{\uptrans{}^\ell (\overline{m}_{f,k} \indica{\mathcal{R}}) - \uptrans{}^\ell (\mu \indica{\mathcal{R}})}
&\leq \supnorm{\uptrans{}^{\ell-1} (\overline{m}_{f,k} \indica{\mathcal{R}}) - \uptrans{}^{\ell-1} (\mu \indica{\mathcal{R}})} \\
&\leq \supnorm{\overline{m}_{f,k} \indica{\mathcal{R}} - \mu \indica{\mathcal{R}}} \leq \epsilon \,
\text{ for all } k \geq k_2.
\end{align*}
If we combine this with \eqref{Eq: proposition: if top class absorbing then eigenvector 3} and use the triangle inequality, we find that 
\begin{align}\label{Eq: proposition: if top class absorbing then eigenvector 4}
\supnorm{\uptrans{}^\ell (\mu \indica{\mathcal{R}}) - \overline{m}_{f,k+\ell}} \leq 3\epsilon \text{ for all } k \geq \max{(k_1,k_2)}.
\end{align}
Finally, we recall how $\ell \geq \ell_1$ was chosen and deduce that 
\begin{align*}
\supnorm{\uptrans{}^\ell (\mu \indica{\mathcal{R}}) - \mu}
= \supnorm{\uptrans{}^\ell (\mu \indica{\mathcal{R}}) - \uptrans{}^\ell \mu} 
\leq \epsilon_1 \supnorm{\mu} 
\leq \epsilon_1 \supnorm{f}
= \epsilon,
\end{align*}
where the first step follows from \ref{iteratedcoherence: bounds}.
To finish the proof, it suffices to combine this inequality with Equation~\eqref{Eq: proposition: if top class absorbing then eigenvector 4} using the triangle inequality.
This allows us to write that
$\supnorm{\mu - \overline{m}_{f,k+\ell}} \leq 4\epsilon$ for all $k \geq \max{(k_1,k_2)}$ or, equivalently, that  $\supnorm{\mu - \overline{m}_{f,k}} \leq 4\epsilon$ for all $k \geq \max{(k_1,k_2)} + \ell$.
Since this holds for any $\epsilon$ and since $\statespace{}$ is finite, we indeed have that $\lim_{k \to +\infty} \overline{m}_{f,k} = \mu$. 
Hence, the upper expectation $\upprev{\mathrm{av},k}(f \vert x) = \overline{m}_{f,k}(x)$ converges to the constant $\upprev{\mathrm{av},\infty}(f) \coloneqq \mu$ for all $x \in \statespace{}$.
\end{proofof}

\section{Proof of Theorem~\ref{theorem: weakly ergodic iff top class absorbing}}

Let us extend the domain of the relation $\rightarrow$ to all subsets of $\statespace{}$, by saying that $A \to B$, for any two $A,B \subseteq \statespace{}$, if $x \to y$ for all $x \in A$ and all $y \in B$.
If $A$ and $B$ are both communication classes, then we have that $A \to B$ if and only if $x \to y$ for at least one $x \in A$ and $y \in B$.

Furthermore, recall that $\leftrightarrow$ is an equivalence relation on $\statespace{}$.
Then it is well-known that the equivalence classes, called communication classes, form a partition $\mathscr{C}$ of $\statespace{}$.

\begin{lemma}\label{lemma: -> induces partial order}
The relation $\rightarrow$ induces a partial order on the partition $\mathscr{C}$ of all communication classes in $\statespace{}$.
\end{lemma}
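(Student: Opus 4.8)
The statement asserts that $\to$ descends to a partial order on the quotient $\mathscr{C}$. First I would make precise what ``induces'' means: since $\to$ is defined on points (extended to sets in the preceding paragraph), I need a well-defined relation $\preceq$ on communication classes, set by declaring $C_1 \preceq C_2$ whenever $x \to y$ for some (equivalently, by the remark just above the lemma, for all) $x \in C_1$, $y \in C_2$. The first task is therefore to check that this is \emph{well-defined} on the quotient: if $x \leftrightarrow x'$ and $y \leftrightarrow y'$, then $x \to y$ iff $x' \to y'$. This follows immediately from transitivity of $\to$ on points (chain $x' \to x \to y \to y'$), which is part of the accessibility structure recalled in Section~\ref{Sect: accessibility and topical maps}.

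Then I would verify the three partial-order axioms for $\preceq$ on $\mathscr{C}$. \emph{Reflexivity}: every class satisfies $C \preceq C$ because $x \to x$ holds for each point (the relation $\to$ was defined to be reflexive). \emph{Transitivity}: if $C_1 \preceq C_2$ and $C_2 \preceq C_3$, pick witnesses $x \in C_1$, $y \in C_2$ with $x \to y$, and $y' \in C_2$, $z \in C_3$ with $y' \to z$; since $y \leftrightarrow y'$ we have $y \to y'$, so $x \to y \to y' \to z$ gives $x \to z$ by transitivity of $\to$, hence $C_1 \preceq C_3$. \emph{Antisymmetry}: suppose $C_1 \preceq C_2$ and $C_2 \preceq C_1$ with $C_1, C_2 \in \mathscr{C}$; then there exist $x \in C_1$, $y \in C_2$ with $x \to y$ and $y \to x$, so $x \leftrightarrow y$, meaning $x$ and $y$ lie in the same communication class; since communication classes partition $\statespace{}$, this forces $C_1 = C_2$.

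The one subtlety worth flagging is the role of the remark immediately preceding the lemma — that for communication classes $A \to B$ iff $x \to y$ for \emph{at least one} pair rather than all pairs. I would invoke this to reconcile the pointwise witnesses used in the transitivity and antisymmetry arguments with the ``for all'' definition of $A \to B$ on sets, so that $\preceq$ as defined via a single witness pair agrees with the set-level relation $\to$. This is genuinely the only place where care is needed, since the rest is a routine transfer of reflexivity, transitivity, and antisymmetry of the pointwise relation through the equivalence classes.

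The main (and really only) obstacle is antisymmetry, since reflexivity and transitivity are direct lifts of the corresponding pointwise properties, whereas antisymmetry is precisely the place where the definition of communication classes as the equivalence classes of $\leftrightarrow$ does the work: mutual accessibility between two classes collapses them into one. I expect the whole proof to be short, with the bulk of the writing devoted to the well-definedness check and the antisymmetry step.
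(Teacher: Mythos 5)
Your proof is correct and takes essentially the same route as the paper's: reflexivity and transitivity are routine lifts of the pointwise properties, and antisymmetry is the one substantive step, settled by observing that mutual accessibility between two classes forces their points to communicate, which collapses the classes into one since $\mathscr{C}$ is a partition. The paper phrases this last step via the union $A \cup B$ being a communication class, while you argue via a single witness pair $x \leftrightarrow y$; these are the same argument, and your additional well-definedness check is careful but not a departure.
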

\begin{proof}
That $\rightarrow$ is reflexive and transitive on $\mathscr{C}$ follows immediately from the reflexivity and transitivity of the relation $\to$ on the singletons.
To see that it is also antisymmetric, consider any two sets $A$ and $B$ in $\mathscr{C}$.
Then if $A \to B$ and $B \to A$, it should be clear that any two vertices in $A \cup B$ communicate and therefore that $A \cup B$ is a communication class.
Since $A$ and $B$ are two sets in the partition $\mathscr{C}$ of communication classes, $A \cup B$ can only be a communication class as well if $A=B$.
Hence, the relation $\to$ induces a partial order on the partition $\mathscr{C}$ of all communication classes.
\qed
\end{proof}

\begin{lemma}\label{lemma: if single communication class then top class}
Consider any upper transition operator $\uptrans{}$ that has a single unique maximal communication class $\mathcal{S}$.
Then $\mathcal{S}$ is the top class corresponding to $\uptrans{}$.
\end{lemma}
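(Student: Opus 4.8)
The plan is to exploit the partial-order structure established in Lemma~\ref{lemma: -> induces partial order} together with the finiteness of $\statespace{}$. Recall that $\to$ induces a partial order on the finite partition $\mathscr{C}$ of communication classes, and that a maximal communication class is precisely a maximal element of this order: $\mathcal{S}$ being maximal means that there is no $C \in \mathscr{C}$ with $C \neq \mathcal{S}$ and $\mathcal{S} \to C$. The goal, by contrast, is to show that $\mathcal{S}$ is a \emph{greatest} element, i.e.\ that $C \to \mathcal{S}$ for every $C \in \mathscr{C}$, since this is what the defining property of a top class amounts to.

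First I would show that, in a finite poset, a unique maximal element dominates every other element. Concretely, fix any communication class $C \in \mathscr{C}$. Starting from $C$, I repeatedly pass to a strictly larger class (one that the current class reaches) whenever such a class exists; since $\mathscr{C}$ is finite and $\to$ is antisymmetric, no strictly increasing chain can repeat, so this procedure terminates at some maximal class $C'$ with $C \to C'$. By hypothesis $\mathcal{S}$ is the only maximal communication class, hence $C' = \mathcal{S}$ and therefore $C \to \mathcal{S}$. As $C$ was arbitrary, every communication class reaches $\mathcal{S}$.

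Next I would translate this statement back to vertices. Since the communication classes partition $\statespace{}$, every $y \in \statespace{}$ lies in some class $C$ with $C \to \mathcal{S}$, which means $y \to x$ for all $x \in \mathcal{S}$. Thus every $x \in \mathcal{S}$ satisfies $y \to x$ for all $y \in \statespace{}$, so $\mathcal{S} \subseteq \mathcal{R}$, where $\mathcal{R} \coloneqq \{x \in \statespace{} \colon y \to x \text{ for all } y \in \statespace{}\}$. In particular $\mathcal{R} \neq \emptyset$, so $\uptrans{}$ has a top class. Finally, I would invoke the fact recorded in Section~\ref{Sect: accessibility and topical maps}, namely that whenever a top class exists it is the unique maximal communication class; since $\mathcal{S}$ is by assumption the unique maximal communication class, the top class $\mathcal{R}$ must coincide with $\mathcal{S}$.

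The only step requiring genuine care is the poset argument---that a unique maximal element of a finite partial order is in fact a greatest element. This property fails for infinite posets (a unique maximal element need not dominate elements sitting in a separate infinite ascending chain), so the finiteness of $\statespace{}$, which guarantees that the ``climb to a maximal element'' terminates, is exactly what the argument hinges on. Everything else is bookkeeping that transfers the class-level conclusion $C \to \mathcal{S}$ to the vertex-level statement defining $\mathcal{R}$, and then appeals to the already-established uniqueness of the top class.
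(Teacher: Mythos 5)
Your proposal is correct and takes essentially the same approach as the paper: the paper's proof likewise climbs a strictly increasing chain of communication classes (distinctness enforced by the antisymmetry of $\to$ from Lemma~\ref{lemma: -> induces partial order}, termination by finiteness of $\mathscr{C}$), which must end at the unique maximal class $\mathcal{S}$, yielding $C \to \mathcal{S}$ for every class $C$ and hence $y \to x$ for all $y \in \statespace{}$ and $x \in \mathcal{S}$. The only difference is cosmetic: the paper concludes directly from the definition of the top class, whereas you additionally invoke the fact from Section~\ref{Sect: accessibility and topical maps} that a top class is the unique maximal communication class in order to identify $\mathcal{R}$ with $\mathcal{S}$; both conclusions are sound.
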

\begin{proof}
Consider any $x \in \mathcal{S}$ and any $y \in \statespace{}$.
We will show that $y \to x$, which by definition implies that $\mathcal{S}$ is the top class.
Let $\mathscr{C}$ be the partition of all communication classes in $\statespace{}$.
If $y$ is in the communication class $\mathcal{S}$, then $x$ and $y$ communicate and therefore $y \to x$.
So suppose that $y$ is not in $\mathcal{S}$.
Then, since $\mathscr{C}$ is a partition of $\statespace{}$, there is a unique $C_1 \in \mathscr{C}$ such that $y \in C_1$ and $C_1 \not= \mathcal{S}$.
As a consequence of our definition of $\to$ on the subsets of $\statespace{}$, it suffices to show that $C_1 \to \mathcal{S}$ in order to conclude that indeed $y \to x$.

Since $C_1 \not= \mathcal{S}$, the class $C_1$ cannot be maximal, so there is a second class $C_2 \in \mathscr{C}$ such that $C_1 \to C_2$ and $C_1 \not= C_2$.
Subsequently, if $C_2 \in \mathscr{C}$ also differs from $\mathcal{S}$, then there is a third class $C_3 \in \mathscr{C}$ such that $C_2 \to C_3$ and $C_2 \not= C_3$.
We also have that $C_1 \not= C_3$, because otherwise we would have that $C_1 \to C_2$ and $C_2 \to C_1$ and therefore by the antisymmetry of $\to$ that $C_1 = C_2$, which contradicts our assumptions.
Moreover, the transitivity of $\to$ implies that $C_1 \to C_3$.
Next, if $C_3 \in \mathscr{C}$ differs from $\mathcal{S}$, then there is a fourth class $C_4 \in \mathscr{C}$ such that $C_3 \to C_4$ and $C_3 \not= C_4$.
This class $C_4$ is also different from $C_1$ because otherwise we would have that $C_1 \to C_3$ and $C_3 \to C_1$ and therefore that $C_1 = C_3$, which once more contradicts our assumptions.
Similarly, since $C_2 \to C_3$ and $C_2 \not= C_3$, we deduce that $C_2 \not= C_4$.
Furthermore, $C_1 \to C_4$ by transitivity of $\to$.
It should be clear at this point that we can continue to repeat this argument, always obtaining a new communication class $C_n\in\mathscr{C}$ that differs from all previous ones and that is accessible from $C_1$.
Then, since $\statespace{}$---and therefore also $\mathscr{C}$---is finite, and since $\mathcal{S}\in\mathscr{C}$, we will eventually find that $C_n=\mathcal{S}$ is accessible from $C_1$, so $C_1\to C_n=\mathcal{S}$.
\qed
\end{proof}

\begin{corollary}\label{corollary: no top class}
Consider any upper transition operator $\uptrans{}$ that does not have a top class.
Then there are at least two maximal communication classes in $\statespace{}$.
\end{corollary}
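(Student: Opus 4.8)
The plan is to argue by a simple dichotomy that leans entirely on the two preceding lemmas; equivalently, since the absence of a top class is the hypothesis, I would show that having \emph{at most} one maximal communication class forces the existence of a top class, contradicting that hypothesis.

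First I would observe that the collection $\mathscr{C}$ of communication classes is non-empty (as $\statespace{}$ is finite and non-empty) and finite. By Lemma~\ref{lemma: -> induces partial order}, the accessibility relation $\to$ induces a partial order on $\mathscr{C}$. Since every finite non-empty partially ordered set possesses at least one maximal element, there is always at least one maximal communication class. Hence the only alternative to the desired conclusion ``at least two maximal communication classes'' is that there is exactly one such class.

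Next I would rule out this remaining case by invoking Lemma~\ref{lemma: if single communication class then top class}: if $\uptrans{}$ has a single unique maximal communication class $\mathcal{S}$, then $\mathcal{S}$ is the top class corresponding to $\uptrans{}$. This directly contradicts the hypothesis of Corollary~\ref{corollary: no top class} that $\uptrans{}$ has no top class. Therefore the case of a unique maximal class is impossible, and combined with the guaranteed existence of at least one maximal class we are left with at least two, as required.

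Since the argument is purely a counting dichotomy, I do not expect any computational obstacle. The one point I would state explicitly — and the only place any substance enters — is the guaranteed existence of at least one maximal communication class: this is precisely where the finiteness of $\statespace{}$ (and hence of $\mathscr{C}$) together with the poset structure furnished by Lemma~\ref{lemma: -> induces partial order} is used, ruling out the degenerate possibility of having no maximal class at all.
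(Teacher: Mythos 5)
Your proposal is correct and follows essentially the same argument as the paper's own proof: use Lemma~\ref{lemma: -> induces partial order} and finiteness of $\mathscr{C}$ to guarantee at least one maximal communication class, then rule out the case of exactly one via Lemma~\ref{lemma: if single communication class then top class}, which would yield a top class and contradict the hypothesis.
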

\begin{proof}
Once more, let $\mathscr{C}$ be the partition of all communication classes in $\statespace{}$.
Since we know from Lemma~\ref{lemma: -> induces partial order} that $\to$ induces a partial order relation on $\mathscr{C}$ and because $\mathscr{C}$ is finite, there is at least one maximal communication class.
The case that there is exactly one is impossible, because by Lemma~\ref{lemma: if single communication class then top class} that would mean that there is a top class.
Hence, there are at least two maximal communication classes in $\statespace{}$.
\qed
\end{proof}

\begin{proofof}{Proposition~\ref{prop: top class if weak ergodicity}.}
Suppose that $\uptrans{}$ does not have a top class.
Then due to Corollary~\ref{corollary: no top class}, there are (at least) two maximal communication classes $\mathcal{S}_1$ and $\mathcal{S}_2$.
Consider any two $c_1, c_2 \in \reals{}$ such that $c_1 \not= c_2$, and let $f \coloneqq c_1 \indica{\mathcal{S}_1} + c_2 \indica{\mathcal{S}_2}$.
Since $f \indica{\mathcal{S}_1} = c_1 \indica{\mathcal{S}_1}$, Lemma~\ref{lemma: average does not depend on f outside S} implies that $\tilde{m}_{f,k} \indica{\mathcal{S}_1} = \tilde{m}_{c_1,k} \indica{\mathcal{S}_1}$ for all $k \in \nats{}$ or, equivalently, that $\upprev{\mathrm{av},k}(f \vert x) = \upprev{\mathrm{av},k}(c_1 \vert x)$ for all $x \in \mathcal{S}_1$ and all $k \in \nats{}$.
By Lemma~\ref{lemma: bounds average}, we know that for any $x \in \mathcal{S}_1$ and any $k \in \nats{}$, $\upprev{\mathrm{av},k}(f \vert x) = \upprev{\mathrm{av},k}(c_1 \vert x) = c_1$.
Hence, $\lim_{k \to +\infty} \upprev{\mathrm{av},k}(f \vert x) = c_1$ for all $x \in \mathcal{S}_1$.
In a completely analogous way, we can deduce that $\lim_{k \to +\infty} \upprev{\mathrm{av},k}(f \vert x) = c_2$ for all $x \in \mathcal{S}_2$.
By assumption, $c_1 \not= c_2$, so we can conclude that the upper expectation $\upprev{\mathrm{av},k}(f \vert x)$ with $f = c_1 \indica{\mathcal{S}_1} + c_2 \indica{\mathcal{S}_2}$, does not converge to a constant that is equal for all $x \in \statespace{}$.
Hence, the upper transition operator $\uptrans{}$ is not weakly ergodic.
\end{proofof}

\begin{lemma}\label{lemma: not TCA then TI_A equals one}
Consider any $\uptrans{}$ that has a top class $\mathcal{R}$ but that does not satisfy (TCA).
Then there is a non-empty subset $A \subseteq \mathcal{R}^c$ such that $\indica{A} \leq \uptrans{}\indica{A}$.
\end{lemma}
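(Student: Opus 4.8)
The plan is to manufacture the set $A$ as the collection of states from which, according to the iterates of $\uptrans{}$, the complement $\mathcal{R}^c$ is never escaped, and then to show that this set is superinvariant in exactly the sense required.

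First I would rewrite the failure of (TCA). Condition [E2] asserts that every $x\in\mathcal{R}^c$ admits some $k$ with $\uptrans{}^k\indica{\mathcal{R}^c}(x)<1$; its negation therefore produces at least one $x_0\in\mathcal{R}^c$ with $\uptrans{}^k\indica{\mathcal{R}^c}(x_0)\geq1$ for every $k\in\nats{}$, and boundedness [\ref{iteratedcoherence: bounds}] upgrades this to $\uptrans{}^k\indica{\mathcal{R}^c}(x_0)=1$ for all $k$. By Lemma~\ref{lemma: Rc decreases} the sequence $\uptrans{}^k\indica{\mathcal{R}^c}$ is non-increasing in $k$ and bounded below by $0$, so its pointwise limit $g\coloneqq\lim_{k\to+\infty}\uptrans{}^k\indica{\mathcal{R}^c}$ exists and satisfies $0\le g\le\indica{\mathcal{R}^c}$ together with $g(x_0)=1$. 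I would then set $A\coloneqq\{x\in\statespace{}\colon g(x)=1\}$: the bound $g\le\indica{\mathcal{R}^c}$ forces $A\subseteq\mathcal{R}^c$, the point $x_0$ shows $A\neq\emptyset$, and since $\mathcal{R}\neq\emptyset$ we also have $A^c\supseteq\mathcal{R}\neq\emptyset$.

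The pivotal preparatory fact is that $g$ is a fixed point of $\uptrans{}$. Because $\uptrans{}$ is topical it is non-expansive, hence continuous, with respect to $\supnorm{\cdot}$ \cite[Proposition~1.1]{GUNAWARDENA2003141}; passing to the limit in the identity $\uptrans{}^{k+1}\indica{\mathcal{R}^c}=\uptrans{}(\uptrans{}^k\indica{\mathcal{R}^c})$ then yields $\uptrans{}g=g$.

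The main obstacle is the final step: transferring the fixed-point property of $g$ to the indicator $\indica{A}$ so as to obtain $\indica{A}\le\uptrans{}\indica{A}$. Here I would exploit coherence rather than mere topicality. Put $\beta\coloneqq\max_{x\in A^c}g(x)$; since $g<1$ throughout $A^c$ and $\statespace{}$ is finite, $\beta<1$. A direct check on $A$ and on $A^c$ shows $g\le\beta+(1-\beta)\indica{A}$. Applying monotonicity [\ref{transcoherence: monotonicity}], constant additivity [\ref{transcoherence: constant addivity}] and non-negative homogeneity [\ref{transcoherence: homogeneity}] (legitimate as $1-\beta\ge0$) gives $g=\uptrans{}g\le\beta+(1-\beta)\uptrans{}\indica{A}$. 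Evaluating at any $x\in A$, where $g(x)=1$, and cancelling $\beta$ leaves $\uptrans{}\indica{A}(x)\ge1=\indica{A}(x)$; on $A^c$ the inequality $\uptrans{}\indica{A}\ge0=\indica{A}$ is immediate from boundedness [\ref{transcoherence: bounds}]. Hence $\indica{A}\le\uptrans{}\indica{A}$, as desired. The delicate point throughout is this affine-envelope trick, which is exactly what converts the soft fixed-point statement about $g$ into the sharp inequality for $\indica{A}$.
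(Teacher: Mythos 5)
Your proof is correct, and it arrives at exactly the same set $A$ as the paper's own proof: since $\uptrans{}^k\indica{\mathcal{R}^c}$ is non-increasing (Lemma~\ref{lemma: Rc decreases}) and bounded by $1$, your level set $\{g=1\}$ coincides with the paper's $A=\{x\in\mathcal{R}^c\colon\uptrans{}^k\indica{\mathcal{R}^c}(x)=1\text{ for all }k\}$, and your closing affine-envelope computation ($g\leq\beta+(1-\beta)\indica{A}$, apply \ref{transcoherence: monotonicity}, \ref{transcoherence: constant addivity}, \ref{transcoherence: homogeneity}, evaluate on $A$, cancel $1-\beta>0$) is the same algebraic trick the paper uses with $\alpha$ in place of $\beta$. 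Where you genuinely diverge is the middle step: the paper stays entirely finitary, choosing a single sufficiently large iterate $k\coloneqq\max_{x\in\mathcal{R}^c\setminus A}k_x$ so that $\uptrans{}^k\indica{\mathcal{R}^c}\leq\indica{A}+\alpha\indica{A^c}$ with $\alpha<1$, which forces a separate (easy) case for $A=\mathcal{R}^c$ where that maximum would run over an empty set; you instead pass to the limit function $g$ and prove $\uptrans{}g=g$ via the non-expansiveness of topical maps \cite[Proposition~1.1]{GUNAWARDENA2003141}. Your route buys a cleaner statement with no case split and only one application of $\uptrans{}$ (since $g$ automatically vanishes on $\mathcal{R}$, your maximum over $A^c=\mathcal{R}\cup(\mathcal{R}^c\setminus A)$ is unproblematic), at the cost of importing continuity of $\uptrans{}$---a tool the paper's proof of this lemma deliberately avoids, although the paper does invoke exactly this non-expansiveness fact elsewhere (in the proof of Proposition~\ref{proposition: if top class absorbing then eigenvector}), so nothing outside the paper's toolkit is needed.
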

\begin{proof}
If $\uptrans{}$ has a top class $\mathcal{R}$ that is not absorbing, then there is at least one $x \in \mathcal{R}^c$ such that $\uptrans{}^k \indica{\mathcal{R}^c}(x) = 1$ for all $k \in \nats{}$.
Let $A \subseteq \mathcal{R}^c$ be the set of all such states $x \in \mathcal{R}^c$.
If $A = \mathcal{R}^c$ then, since $\uptrans{} \indica{\mathcal{R}^c}(x) = 1$ for all $x \in A = \mathcal{R}^c$ and since $\uptrans{} \indica{\mathcal{R}^c}(x) = 0$ for all $x \in \mathcal{R}$ by Lemma~\ref{lemma: not from S to Sc}, we have that $\uptrans{}\indica{\mathcal{R}^c} = \indica{\mathcal{R}^c}$.
Hence, in that case, the statement holds. In the remainder of the proof, we can therefore assume that $A \subset \mathcal{R}^c$, implying that $\mathcal{R}^c \setminus A$ is non-empty.

Observe that by the definition of $A$ there is for any $x \in \mathcal{R}^c \setminus A$, an index $k_x \in \nats{}$ such that $\uptrans{}^{k_x} \indica{\mathcal{R}^c} (x) \not= 1$, and therefore, due to \ref{iteratedcoherence: bounds}, also that $\uptrans{}^{k_x} \indica{\mathcal{R}^c} (x) < 1$.
By Lemma~\ref{lemma: Rc decreases}, it follows that then also $\uptrans{}^{k} \indica{\mathcal{R}^c} (x) < 1$ for all $x \in \mathcal{R}^c \setminus A$ and all $k \geq k_x$.
Hence, for $k \coloneqq \smash{\max_{x \in \mathcal{R}^c \setminus A} k_x} \in \nats{}$, we have that $\uptrans{}^{k} \indica{\mathcal{R}^c} (x) < 1$ for all $x \in \mathcal{R}^c \setminus A$.
Let $\alpha \coloneqq \smash{\max_{x \in \mathcal{R}^c \setminus A} \uptrans{}^{k} \indica{\mathcal{R}^c} (x)} < 1$.
Since $\uptrans{}^{k} \indica{\mathcal{R}^c}(x) = 0$ for all $x \in \mathcal{R}$ due to Lemma~\ref{lemma: not from S to Sc} and $0 \leq \alpha$ due to \ref{iteratedcoherence: bounds}, we infer that $\uptrans{}^{k} \indica{\mathcal{R}^c}(x) \leq \alpha$ for all $x \in \mathcal{R} \cup (\mathcal{R}^c \setminus A) = A^c$ or, equivalently, that $\indica{A^c} \uptrans{}^{k} \indica{\mathcal{R}^c} \leq \alpha \indica{A^c}$. Hence,
\begin{equation*}
\uptrans{}^k \indica{\mathcal{R}^c}
=\indica{A} \uptrans{}^k \indica{\mathcal{R}^c}+\indica{A^c}\uptrans{}^k \indica{\mathcal{R}^c}
\leq\indica{A} \uptrans{}^k \indica{\mathcal{R}^c}+ \alpha \indica{A^c}
=\indica{A}+ \alpha \indica{A^c},
\end{equation*}
using the definition of $A$ for the last equality. It follows that
\begin{align*}
\uptrans{}^{k+1} \indica{\mathcal{R}^c} 
=
\uptrans{}\,\uptrans{}^{k} \indica{\mathcal{R}^c} 
\leq \uptrans{} \big( \indica{A} + \alpha \indica{A^c} \big) 
= \uptrans{} \big( \alpha + (1 - \alpha) \indica{A} \big) 
= \alpha + (1 - \alpha) \uptrans{} \indica{A}, 
\end{align*}
using monotonicity [\ref{transcoherence: monotonicity}] for the inequality and \ref{transcoherence: constant addivity} and \ref{transcoherence: homogeneity} for the last equality. Multiplying with $\indica{A}$ yields $\indica{A} \uptrans{}^{k+1} \indica{\mathcal{R}^c} \leq \alpha\indica{A} + (1 - \alpha) \indica{A} \uptrans{} \indica{A}$ and therefore, since the definition of $A$ implies that $\indica{A}=\indica{A}\uptrans{}^{k+1} \indica{\mathcal{R}^c}$, we find that $\indica{A} \leq \alpha\indica{A} + (1 - \alpha) \indica{A} \uptrans{} \indica{A}$, or equivalently, that $(1 - \alpha) \indica{A} \leq (1 - \alpha) \indica{A} \uptrans{} \indica{A}$.
Since $1-\alpha > 0$, it follows that $\indica{A} \leq \indica{A} \uptrans{} \indica{A}$, which implies that $\indica{A} \leq \uptrans{} \indica{A}$ because $\uptrans{} \indica{A}$ is non-negative [\ref{transcoherence: bounds}].
\qed
\end{proof}

\medskip 

\begin{proofof}{Proposition~\ref{prop: TCA if weak ergodicity}.}
Consider any $\uptrans{}$ that has a top class $\mathcal{R}$ and suppose that $\mathcal{R}$ is not absorbing.
Then Lemma~\ref{lemma: not TCA then TI_A equals one} guarantees that there is a non-empty subset $A \subseteq \mathcal{R}^c$ such that $\indica{A} \leq \uptrans{}\indica{A}$.
Now consider any $x \in A$ and any $y \in \mathcal{R}$.
We will show that $\lim_{k \to +\infty} \upprev{\mathrm{av},k}(\indica{A} \vert x) = 1$ and that $\lim_{k \to +\infty} \upprev{\mathrm{av},k}(\indica{A} \vert y) = 0$, implying that $\uptrans{}$ cannot be weakly ergodic.

To prove that $\lim_{k \to +\infty} \upprev{\mathrm{av},k}(\indica{A} \vert x) = 1$, we show by induction that $\tilde{m}_{\indica{A},k} \geq (k+1) \indica{A}$ for all $k \in \natz{}$.
By definition, we have that $\tilde{m}_{\indica{A},0} = \indica{A}$, which establishes our induction base.
To prove the induction step, assume that the inequality holds for $k = i-1$, with $i \in \nats{}$, so $\tilde{m}_{\indica{A},i-1} \geq i \indica{A}$.
Then according to the recursive expression \eqref{Eq: recursive expression},
\begin{align*}
\tilde{m}_{\indica{A},i} 
= \indica{A} + \uptrans{} \tilde{m}_{\indica{A},i-1}
\geq \indica{A} + \uptrans{} (i \indica{A})
\geq (i+1) \indica{A},
\end{align*}
where the second step follows from the induction hypothesis and the monotonicity [\ref{transcoherence: monotonicity}] of $\uptrans{}$, and the last from \ref{transcoherence: homogeneity} together with the fact that $\indica{A} \leq \uptrans{}\indica{A}$.
This implies that the inequality holds for $k=i$ as well, hence finalising our induction argument.
We conclude that $\upprev{\mathrm{av},k}(\indica{A} \vert x) = \tfrac{1}{k+1} \tilde{m}_{\indica{A},k}(x) \geq \indica{A}(x)$ for all $k \in \natz{}$.
Due to Lemma~\ref{lemma: bounds average} and since $x \in A$, this implies that $\smash{\upprev{\mathrm{av},k}(\indica{A} \vert x)} = 1$ for all $k \in \natz{}$.
Hence, $\smash{\lim_{k \to +\infty} \upprev{\mathrm{av},k}(\indica{A} \vert x)} = 1$.

It remains to prove that $\lim_{k \to +\infty} \upprev{\mathrm{av},k}(\indica{A} \vert y) = 0$.
Because $A \subseteq \mathcal{R}^c$, we have that $\indica{A} \indica{\mathcal{R}} = 0 = 0 \, \indica{\mathcal{R}}$, and since $\mathcal{R}$ is a maximal communication class, Lemma~\ref{lemma: average does not depend on f outside S} implies that $\tilde{m}_{\indica{A},k} \indica{\mathcal{R}} = \tilde{m}_{0,k} \indica{\mathcal{R}}$ for all $k \in \natz{}$.
Hence, for any $k \in \natz{}$, we have that $\upprev{\mathrm{av},k}(\indica{A} \vert y) = \upprev{\mathrm{av},k}(0 \vert y) = 0$, where the last equality follows from Lemma~\ref{lemma: bounds average}.
As a consequence, $\lim_{k \to +\infty} \upprev{\mathrm{av},k}(\indica{A} \vert y) = 0$.
\end{proofof}

\bigskip

\begin{proofof}{Theorem~\ref{theorem: weakly ergodic iff top class absorbing}.}
That (TCA) is a sufficient condition follows from Proposition~\ref{proposition: if top class absorbing then eigenvector}.
Necessity follows from Proposition~\ref{prop: top class if weak ergodicity} together with Proposition~\ref{prop: TCA if weak ergodicity}.
\end{proofof}

}{}

\end{document}